\documentclass[a4paper,12pt]{article}
\usepackage[utf8]{inputenc}
\usepackage[T1]{fontenc}
\usepackage{amsmath, amssymb, amsthm, mathrsfs, stmaryrd, faktor, enumitem, tikz-cd, url, todonotes}
\usepackage[top=2.3cm,bottom=2cm,right=2cm,left=2cm]{geometry}


\numberwithin{equation}{section}

\newtheorem{theo}[equation]{Theorem}
\newtheorem{prop}[equation]{Proposition}
\newtheorem{lem}[equation]{Lemma}
\newtheorem{cor}[equation]{Corollary}

\theoremstyle{definition}
\newtheorem{defi}[equation]{Definition}
\newtheorem{ex}[equation]{Example}

\theoremstyle{remark}
\newtheorem{rem}[equation]{Remark}

\newcommand*{\Z}{\ensuremath\mathbb{Z}} 
\newcommand*{\Q}{\ensuremath\mathbb{Q}} 
\newcommand*{\R}{\ensuremath\mathbb{R}} 
\newcommand*{\F}{\ensuremath\mathbb{F}} 
\newcommand*{\set}[1]{\left\{#1\right\}} 
\newcommand*{\gltw}{\gl_2^{tw}} 
\newcommand*{\glzbis}[1]{\gl_2(#1)\times \gl_1(#1)} 
\newcommand*{\secs}[1]{\Gamma(#1,\os)} 
\newcommand*{\cc}{\ensuremath\mathcal{C}}
\newcommand*{\mm}{\ensuremath\mathcal{M}}
\newcommand*{\nn}{\ensuremath\mathcal{N}}
\newcommand*{\LL}{\ensuremath\mathcal{L}}
\newcommand*{\vv}{\ensuremath\mathcal{V}}

\newcommand*{\os}{\ensuremath\mathcal{O}_S}
\newcommand*{\ou}{\ensuremath\mathcal{O}_U}
\newcommand*{\vvu}{\ensuremath\mathcal{V}_{\vert U}}
\newcommand*{\LLU}{\ensuremath\mathcal{L}_{\vert U}}
\newcommand*{\ccu}{\ensuremath\mathcal{C}_{\vert U}}
\newcommand*{\nnu}{\ensuremath\mathcal{N}_{\vert U}}
\newcommand*{\ortau}{\ensuremath\overline{\tau}}

\newcommand*{\sheafhom}{\mathscr{H}\kern -.5pt om}

\DeclareMathOperator{\SL}{SL}
\DeclareMathOperator{\gl}{GL}
\DeclareMathOperator{\spec}{Spec}

\DeclareMathOperator{\Frac}{Frac}

\DeclareMathOperator{\Aut}{Aut}
\DeclareMathOperator{\sym}{Sym}
\DeclareMathOperator{\pic}{Pic}
\DeclareMathOperator{\tr}{Tr}

\DeclareMathOperator{\charac}{char}

\mathchardef\mhyphen="2D

\title{Recovering the Picard group of quadratic algebras from Wood's binary quadratic forms}
\author{William Dallaporta}
\date{April 2024}

\begin{document}

\maketitle

\begin{sloppypar}

\begin{small}\textit{Keywords:} Picard group, quadratic algebra, binary quadratic form\end{small}

\begin{small}\textit{MSC classes:} 11E16, 11R29, 14C22 (Primary)\end{small}

\paragraph{Abstract} Let $S$ be a scheme such that $2$ is not a zero divisor. In this paper, we address the following question: given a quadratic algebra over $S$, how can we parametrize its Picard group in terms of quadratic forms? In 2011, Wood established a set-theoretical bijection between isomorphism classes of primary binary quadratic forms over $S$ and isomorphism classes of pairs $(\cc,\mm)$ where $\cc$ is a quadratic algebra over $S$ and $\mm$ is an invertible $\cc$-module. Unexpectedly, examples suggest that a refinement of Wood's bijection is needed in order to parametrize Picard groups. This is why we start by classifying quadratic algebras over $S$; this is achieved by using two invariants, the discriminant and the parity. Extending the notion of orientation of quadratic algebras to the non-free case is another key step, eventually leading us to the desired parametrization. All along the paper, we illustrate various notions and obstructions with a wide range of examples.

\section{Introduction}

\subsection{Historical background and motivations}

The richness of the theory of integral binary quadratic forms has been proven many times since the work of Gauss in his \textit{Disquisitiones Arithmeticae} (1801). By \textit{integral binary quadratic form}, we mean a map ${q \colon \Z^2 \longrightarrow \Z}$ such that for all $x,y \in \Z$, we have ${q(x,y) = ax^2+bxy+cy^2}$ for some $a,b,c \in \Z$. Such a map is said to be \textit{primitive} if ${\gcd(a,b,c) = 1}$, and its \textit{discriminant} is the quantity ${\Delta = b^2-4ac}$. We have a natural action of $\gl_2(\Z)$ and $\SL_2(\Z)$ on integral quadratic forms, defined by ${\mu \cdot q := q \circ \mu}$ where $\mu \in \gl_2(\Z)$ or $\SL_2(\Z)$.

First studied in the framework of diophantine equations, quadratic forms appeared shortly after to have strong connections with the ideal class group of quadratic orders over $\Z$. The induced parametrization of ideal classes by classes of primitive quadratic forms is still one of the best tools to derive information about the ideal class group, such as its size \cite[Chapter~5]{Cohen}.

The correspondence is deep and useful, nevertheless it requires a certain amount of work to handle it properly. Gauss' computations are particularly hard to follow, and lots of mathematicians tried to simplify his arguments, making them clearer and at the same time more likely to be generalized (\textit{cf.} for instance \cite[p.~58 and 74]{Cox2}, or \cite[p.~32, historical note]{Towber}). A natural question is to replace the base ring $\Z$ by another one, such that we still have nice results on the associated quadratic forms, especially regarding the group structure and its link with ideal classes. One important feature of that problem is to determine which quadratic algebra corresponds to a given quadratic form. Over $\Z$, the discriminant of an integral quadratic form fully determines the corresponding quadratic order, but this is no longer true over more general rings, as Example~\ref{cexpar} shows.

The main motivation for the present note is to find a parametrization of the Picard group of a given quadratic algebra by a well-chosen set of classes of quadratic forms, over a general base scheme $S$ instead of $\Z$. To reach this goal, we shall start by establishing a classification of quadratic algebras, making clear which quadratic algebra corresponds to a given quadratic form.
\vspace{.5cm}

In 1980, Towber made a thorough review of previous works extending the base ring to get a group structure on quadratic forms \cite[introduction]{Towber}, with an extensive bibliography. To generalize a step further, he enriched the definition of quadratic form over a ring $R$, considering ${q \colon P \longrightarrow R}$ where $P$ is an oriented locally free rank $2$ $R$-module such that $\Lambda^2P$ is free of rank $1$ over $R$, instead of merely ${P = R^2}$ \cite[Definition~1.8]{Towber}. Therein, the orientation is nothing but the choice of a generator of $\Lambda^2P$. This definition of quadratic forms enabled Towber to handle sheaves of quadratic forms (from page 85 of \cite{Towber}) by using localization arguments.

As before, we say that an oriented binary quadratic form is \textit{primitive} if the ideal generated by the image of $q$ is the unit ideal. Towber considered the $\SL_2$-action, under which the orientation and the discriminant are preserved. In order to endow the set of $\SL_2$-classes of primitive oriented quadratic forms of given discriminant with a group structure, Towber introduced a new concept, the \textit{parity} of a quadratic form, which for an integral binary quadratic form ${ax^2+bxy+cy^2}$ is the class of $b$ modulo $2$. When $2$ is not a zero divisor in the base ring $R$, this allowed Towber to define a composition law on the set of $\SL_2$-classes of primitive oriented quadratic forms of given \textit{type}, that is, of given discriminant and parity. He showed directly that it is a group law, hence his work is independent from any connection with some Picard group.
\vspace{.5cm}

Almost at the same time (1982), Kneser (\cite{Kneser}) managed to obtain a group structure by selecting the quadratic forms whose associated even Clifford algebra $C$ is given. He defined a composition law on their $\gl_2(R)$-classes, without any condition on the base ring $R$. Then he linked it to the Picard group of $C$, but the obtained homomorphism is neither injective nor surjective in general. To recover an isomorphism with the full Picard group, he added the information of a second $R$-module, so that what he called \textit{quadratic maps} are triples $(M,N,q)$ where ${q \colon M \longrightarrow N}$ with $M$ locally free of rank $2$ and $N$ locally free of rank $1$, instead of ${N = R}$ as previously. Requiring $M$ to be a projective rank $1$ $C$-module and $q$ to be compatible with the norm of $C$, Kneser obtained the desired isomorphism with the Picard group of $C$.
\vspace{.5cm}

We will not use the even Clifford algebra point of view in this paper. However, the definition of quadratic maps was Wood's starting point for the generalisation of the theory to an arbitrary scheme in place of our base ring $R$ (\cite[2011]{Wood}). If $(S,\os)$ is a scheme, then her \textit{linear binary quadratic forms} are triples $(\vv,\LL,q)$ where $\vv$ and $\LL$ are locally free $\os$-modules of ranks $2$ and $1$ respectively. Here, $q$ is not seen as a map but rather as a global section of $\sym^2\vv \otimes_{\os} \LL$, which is the dual point of view. To define equivalence classes, Wood considered the action of $\gl_2(\vv)\times\gl_1(\LL)$, where the $\gl_2$-part acts on $\vv$ and the $\gl_1$-one on $\LL$ by change of bases. She established a bijection (\cite[Theorem~1.4]{Wood}) associating $\gl_2 \times \gl_1$-classes of linear binary quadratic forms on the one hand to classes of pairs $(\cc,\mm)$ on the other hand, where $\cc$ is a locally free $\os$-algebra of rank $2$ and $\mm$ is a traceable $\cc$-module (cf. \cite{Wood} for the definition of traceable). Restricting to primitive forms corresponds to requiring $\mm$ to be an invertible $\cc$-module in that bijection \cite[Theorem~1.5]{Wood}. Her motivations being related to moduli problems, she gave neither a group law nor an isomorphism with some Picard group, but rather a set-theoretical bijection with a disjoint union of quotient sets of Picard groups (in the primitive case).
\vspace{.5cm}

\subsection{Main ideas and results of this article}

The spirit of this paper is to recover from Wood's bijection an explicit bijection between the Picard group of a given quadratic algebra and a well-chosen set of classes of primitive quadratic forms over a general scheme $S$. Although it may seem straightforward, it is actually not obvious how to properly extract it from Wood's bijection. A natural approach is to restrict Wood's bijection to the case of primitive quadratic forms of fixed discriminant. But this leads to at least four problems, illustrated by Examples~\ref{cexpar} (the corresponding quadratic algebra is not uniquely determined), \ref{pasuni} ($2$ must be a non-zero-divisor), \ref{exzauto} (automorphisms may identify ideal classes) and \ref{exdelta} (the discriminant must be oriented).

Thus, our unique assumption on $S$ is that $2$ must not be a zero divisor. Our global strategy is first to determine which linear binary quadratic forms match to which quadratic algebras. Since automorphisms of quadratic algebras are responsible for the identification of some Picard classes, we then rigidify quadratic algebras with an adequate notion of orientation. This way, we remove the identification problem, but we create extra copies of the Picard group, parametrized by ``twisted'' quadratic forms. Classifying oriented quadratic algebras enables us to select exactly one copy.

The first well-known invariant to determine which quadratic forms correspond to which quadratic algebras is the discriminant. As previously said, over $\Z$ it is a complete invariant for quadratic orders, but is not enough in general to characterize a quadratic algebra (Example~\ref{cexpar}). Inspired by Towber's notion of parity, we introduce a scheme-theoretic version of it, for both quadratic algebras and linear binary quadratic forms (Definitions~\ref{defpargen} and \ref{defpargenlbqf}). Both the discriminant $\Delta$ and the parity $\Pi$ rely on some $\os$-module $\nn \in \pic(S)$, that is, $\Delta$ and $\Pi$ are global sections of $\nn^{\otimes 2}$ and $\nn/2\nn$ respectively. The triple $(\Delta,\Pi,\nn)$ constitutes what we call the \textit{type} of a quadratic algebra or of a linear binary quadratic form. We characterize all triples $(\Delta,\Pi,\nn)$ with ${\Delta \in \Gamma(S,\nn^{\otimes 2})}$ and ${\Pi \in \Gamma(S,\nn/2\nn)}$ appearing as the type of some quadratic algebra (Proposition~\ref{constr}). Our main result regarding classification of quadratic algebras by their types is Theorem~\ref{uniqueness}, which can be summarized as:
\begin{theo} \label{goalclass}
	When $2$ is not a zero divisor, the isomorphism class of a quadratic algebra is fully determined by the isomorphism class of its type.
\end{theo}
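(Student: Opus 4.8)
The plan is to turn the statement into a gluing problem for a sheaf of isomorphisms which, thanks to a rigidity property of automorphisms of quadratic algebras, has at most one section over every open. Let $\cc_1$ and $\cc_2$ be quadratic $\os$-algebras whose types are isomorphic, and fix an isomorphism of types. It provides an isomorphism $\iota\colon\nn_1\xrightarrow{\ \sim\ }\nn_2$ of the underlying line bundles carrying $\Delta_{W,1}$ to $\Delta_{W,2}$ and $\Pi_1$ to $\Pi_2$. Since the line bundle $\nn_{\cc}$ occurring in the type of a quadratic algebra $\cc$ is canonically isomorphic to $\Lambda^2\cc$ (or to its dual, according to the chosen convention), $\iota$ induces an isomorphism $\iota'\colon\Lambda^2\cc_1\xrightarrow{\ \sim\ }\Lambda^2\cc_2$. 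I would then introduce the sheaf $\mathscr T$ on $S$ whose sections over an open $U$ are the $\os$-algebra isomorphisms $\psi\colon\cc_1|_U\to\cc_2|_U$ satisfying $\Lambda^2\psi=\iota'|_U$; a global section of $\mathscr T$ is exactly an isomorphism $\cc_1\cong\cc_2$, which is what the theorem asserts.

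The first thing to establish is a rigidity lemma: for any quadratic $\os$-algebra $\cc$, the only automorphism of $\cc$ inducing the identity on $\Lambda^2\cc$ is the identity. Since being the identity is a local condition, one may restrict to an open over which $\cc$ is free, with basis $1,x$ and relation $x^2=tx-n$; an algebra automorphism then sends $x$ to $ax+b$ with $a$ a unit, compatibility with the relation forces $2b=t(1-a)$ — so $b$ is determined by $a$, precisely because $2$ is not a zero divisor — and inducing the identity on $\Lambda^2\cc$ forces $a=1$, whence $b=0$. It follows that if $\psi,\psi'\in\mathscr T(U)$ then $\psi^{-1}\circ\psi'$ is an automorphism of $\cc_1|_U$ trivial on $\Lambda^2$, hence the identity, so $\psi=\psi'$; thus $\mathscr T$ has at most one section over every open.

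Next, one proves that $\mathscr T$ is locally non-empty. Cover $S$ by opens $U$ over which $\nn$ is trivial and $\cc_1,\cc_2$ are free, and choose bases so that $\cc_i|_U$ has basis $1,x_i$ with $x_i^2=t_ix_i-n_i$ and with $1\wedge x_1$ and $1\wedge x_2$ corresponding under $\iota'|_U$. By the definitions of the Wood-discriminant and of the parity, the compatibility of $\iota$ with $\Delta_W$ and $\Pi$ reads, in these coordinates, as the identities $t_1^2-4n_1=t_2^2-4n_2$ and $t_1\equiv t_2\pmod 2$. Hence $c:=(t_1-t_2)/2$ is a well-defined section over $U$, and $x_1\mapsto x_2+c$ extends to an $\os$-algebra homomorphism: reducing $(x_2+c)^2-t_1(x_2+c)+n_1$ modulo $x_2^2=t_2x_2-n_2$ gives $(t_2+2c-t_1)\,x_2+(c^2-t_1c+n_1-n_2)$, whose first coefficient vanishes by the choice of $c$ and whose second coefficient equals $\tfrac14\big[(t_2^2-4n_2)-(t_1^2-4n_1)\big]=0$, the division by $4$ being legitimate because $2$ is not a zero divisor. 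This homomorphism is an isomorphism (its inverse sends $x_2\mapsto x_1-c$) and carries $1\wedge x_1$ to $1\wedge x_2$, so it belongs to $\mathscr T(U)$.

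To conclude, a sheaf that is locally non-empty and has at most one section over every open admits a global section, since the unique local sections agree on overlaps and glue; applied to $\mathscr T$ this produces an $\os$-algebra isomorphism $\cc_1\cong\cc_2$. The main obstacle — and essentially the only place where the hypothesis on $2$ is used — is the rigidity lemma, together with the divisions by $2$ and $4$ in the local construction; a secondary point to be handled with care is the translation of ``isomorphic types'' into the numerical identities $t_1^2-4n_1=t_2^2-4n_2$ and $t_1\equiv t_2\pmod 2$, and it is here that the parity, not merely the discriminant, is needed — as Example~\ref{cexpar} shows it must be.
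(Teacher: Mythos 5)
Your proof is correct, but it globalizes the local argument differently from the paper. The local step is essentially the paper's: your map $x_1\mapsto x_2+\frac{t_1-t_2}{2}$, justified by the identities $t_1^2-4n_1=t_2^2-4n_2$ and $t_1\equiv t_2\pmod 2$ extracted from the isomorphism of types, is exactly the isomorphism of Lemma~\ref{freeok} (you implicitly use Proposition~\ref{freedirectsummand} to get bases of the form $(1,x_i)$, and you rescale $x_2$ by a unit to make $1\wedge x_1$ and $1\wedge x_2$ correspond under $\iota'$ --- both points are fine but worth a sentence). Where you diverge is the gluing: the paper constructs an explicit model $\Omega(\Delta_W,\Pi,\nn)$ by gluing local free algebras (Proposition~\ref{constr}) and then verifies by direct computation, using a fixed isomorphism $\theta\colon\cc/\os\overset{\sim}{\to}\nn^{\vee}$ and the units $\varepsilon_i,\varepsilon_{i,j}$, that the local isomorphisms $\Psi_i$ commute with the transition maps $\psi_{i,j}$; you instead rigidify by requiring compatibility with $\iota'$ on $\Lambda^2$ and prove a rigidity lemma --- an automorphism acting trivially on $\Lambda^2\cc$ is the identity when $2$ is not a zero divisor --- so that your sheaf $\mathscr{T}$ has at most one section over every open, and the local sections glue with no overlap computation at all. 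That rigidity lemma is precisely Proposition~\ref{auto} on $\nn$-oriented quadratic algebras, which the paper only proves in Section~3 and uses for Theorems~\ref{oruniqueness} and \ref{bij}, not for Theorem~\ref{uniqueness}; in effect you classify by orienting the algebras from the outset. The trade-off: your route is shorter and replaces the cocycle check by a uniqueness argument, whereas the paper's construction of $\Omega(\Delta_W,\Pi,\nn)$ carries extra payload that is reused later (the characterization of valid triples in Proposition~\ref{constr}, the oriented classification of Theorem~\ref{oruniqueness}, and the explicit description in Corollary~\ref{expl}), which a direct comparison of two given algebras does not yield.
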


In the case when $S = \spec(R)$ and $\nn$ is free, one can extract this classification from the proof of \cite[Theorem~4.3]{Voightlibre}. Though not explicitly mentioned, the parity shows up in Voight's arguments. In another direction, he considered the situation when $2$ is a zero divisor, giving a partial answer to the classification by introducing the \textit{Artin-Schreier group}.

Coming back to quadratic forms, we then show that Wood's bijection is type-preserving (Theorem~\ref{Woodgl2gl1}), making possible the selection of the equivalence classes of linear binary quadratic forms corresponding to a given quadratic algebra.

The remaining obstruction to reach the Picard group in that bijection is the existence of non-trivial automorphisms of the corresponding quadratic algebra. Indeed, such automorphisms may identify for instance the Picard class of an invertible module with its inverse, as Wood already noticed over $\Z$ (introduction of \cite{Wood}). More generally, the following diagram summarizes Wood's bijection in the primitive case, with the use of the parity, as given in Theorem~\ref{Woodgl2gl1}: 
\begin{equation}\label{diagrgl2gl1}
	\begin{tikzcd}
		\begin{minipage}{.44\linewidth}
			$\faktor{\left\{
				\begin{array}{c}
					primitive~linear~binary \\
					quadratic~forms \\
					of~type~isomorphic~to \\
					(\Delta,\Pi,\nn)
				\end{array}
				\right\}}{\gl_2\times\gl_1}$
		\end{minipage}
		\arrow[r,leftrightarrow,"1~:~1"]
		& ~~\faktor{\pic(\cc_0)}{\sim}
	\end{tikzcd}
\end{equation}
\noindent where $\cc_0$ is a given quadratic algebra of type $(\Delta, \Pi,\nn)$, and $\sim$ is an equivalence relation defined in Theorem~\ref{Woodgl2gl1}. Our main idea to solve the problem is to introduce an $\nn$-\textit{orientation} of our quadratic algebra $\cc$, that is, to choose an isomorphism of $\cc/\os$ (or equivalently of $\Lambda^2\cc$) with a fixed representative $\nn^{\vee}$ of the isomorphism class of $\cc/\os$ (Definition~\ref{defori}). When $2$ is not a zero divisor, the only remaining automorphism is the identity, as desired.

On the quadratic forms' side, this amounts to a particular choice of the locally free module $\LL$, namely ${\LL = \Lambda^2\vv^{\vee} \otimes_{\os} \nn}$. Wood already studied different choices of $\LL$, and the closest to ours is ${\LL = \Lambda^2\vv^{\vee}}$, corresponding to the case when $\cc/\os$ is free (\cite[Theorem~5.2]{Wood}). The extra factor $\nn$ enables us to treat the general case. Inspired by Wood's terminology, we call \textit{$\nn$-twisted quadratic forms} the quadratic forms corresponding to our choice of $\LL$. This leads to the \textit{twisted action of the linear group}, which we denote by $\gl_2^{tw}$ (Definition~\ref{defgltw}).

For ${\nn \in \pic(S)}$, considering $\nn$-twisted quadratic forms and $\nn$-oriented quadratic algebras removes the problem of identification of Picard classes, but has the side effect of duplicating them. By Theorems~\ref{Woodgltw} and \ref{bij}, Diagram \eqref{diagrgl2gl1} becomes
\begin{center}
	\begin{tikzcd}
		\begin{minipage}{.36\linewidth}
			$\faktor{\left\{
				\begin{array}{c}
					primitive~\nn \mhyphen twisted \\
					quadratic~forms \\
					of~type~isomorphic~to \\
					(\Delta,\Pi,\nn)
				\end{array}
				\right\}}{\gltw}$
		\end{minipage}
		\arrow[r,leftrightarrow,"1~:~1"]
		& ~~\underset{i \in \kappa}{\bigsqcup} \pic(\cc_0)
	\end{tikzcd}
\end{center}
where $\kappa$ denotes the (possibly infinite) cardinality of the set of different pairs $(\varepsilon^2\Delta,\varepsilon\Pi)$ for $\varepsilon$ varying in $\Gamma(S,\os)^{\times}$. To reach the Picard group of $\cc_0$, we must refine the definition of type in that context. Using specific representatives of the discriminant and the parity, which we refer to as \textit{oriented} or \textit{natural} discriminants and parities, we prove that there is a unique oriented quadratic algebra of given oriented type, up to isomorphism (Theorem~\ref{oruniqueness}). Our main result, using definitions detailed after, is Theorem~\ref{bij}, which can be stated as follows:

\begin{theo} \label{goal}
	Let $S$ be a scheme such that $2$ is not a zero divisor. Let $\nn \in \pic(S)$. Then the set of $\gltw$-classes of primitive $\nn$-twisted binary quadratic forms over $S$ of fixed natural discriminant $\delta$ and natural parity $\pi$ is in bijection with the Picard group of the unique (up to isomorphism) $\nn$-oriented quadratic algebra $\cc_0$ of the same oriented discriminant $\Delta$ and oriented parity $\pi$.
\end{theo}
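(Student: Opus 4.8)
The plan is to bootstrap the classical correspondence from Wood's set-theoretical bijection (Theorem~1.5 of \cite{Wood}), restricted along the type-preserving property (Theorem~\ref{Woodgl2gl1}) and then cut down further by the orientation. Concretely, I would proceed in four steps. First, start from the bijection sending $\gl_2\times\gl_1$-classes of primitive linear binary quadratic forms $(\vv,\LL,q)$ to classes of pairs $(\cc,\mm)$ with $\cc$ a quadratic algebra and $\mm$ an invertible $\cc$-module; by Theorem~\ref{Woodgl2gl1} this bijection respects types, so fixing the type $(\Delta,\pi,\nn)$ on the forms' side restricts it to pairs $(\cc,\mm)$ where $\cc$ has that type. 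By Theorem~\ref{goalclass} (i.e.\ Theorem~\ref{uniqueness}), all such $\cc$ are isomorphic to a single quadratic algebra $\cc_0$, so after this step we have a bijection between $\gl_2\times\gl_1$-classes of primitive forms of type $(\Delta,\pi,\nn)$ and isomorphism classes of pairs $(\cc_0,\mm)$ with $\mm$ invertible over $\cc_0$ --- equivalently, $\pic(\cc_0)$ modulo the action of $\Aut(\cc_0)$.

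Second, I would pass from the full $\gl_2\times\gl_1$-action to the twisted action $\gltw$ by pinning down $\LL$. The point is that choosing $\LL=\Lambda^2\vv^\vee\otimes_{\os}\nn$ (the $\nn$-twisted normalization, Definition~\ref{defgltw}) is exactly what an $\nn$-orientation of $\cc$ does on the algebra side: an $\nn$-orientation is an identification $\Lambda^2\cc\cong\nn^\vee$, and under Wood's correspondence $\Lambda^2\cc$ matches $\Lambda^2\vv\otimes\LL^\vee$, so fixing one fixes the other. Thus I expect a commuting square relating the forgetful map (forget the orientation / forget the choice of $\LL$) on both sides. Third, invoke the rigidity statement: when $2$ is not a zero divisor, an $\nn$-oriented quadratic algebra has no nontrivial automorphisms (the discussion after Definition~\ref{defori}, and Theorem~\ref{oruniqueness} for uniqueness of the oriented model). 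Hence on the oriented side the $\Aut$-action is trivial, and the set of $\gltw$-classes of primitive $\nn$-twisted forms of oriented type $(\Delta,\pi)$ is in bijection with the set of $\cc_0$-module isomorphism classes of invertible $\cc_0$-modules, which is precisely $\pic(\cc_0)$ as a set.

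Fourth and last, I would check that the oriented-type data $(\Delta,\pi)$ on the forms' side matches the oriented discriminant and oriented parity of $\cc_0$ on the algebra side --- this is where the ``natural/oriented'' representatives introduced before Theorem~\ref{oruniqueness} do their job: the $\nn$-orientation trivializes the ambiguity in the representative of $\Delta_W\in\Gamma(S,\nn^{\otimes2})$ and $\Pi\in\Gamma(S,\nn/2\nn)$, so that $\Delta$ and $\pi$ become honest invariants that Wood's bijection transports faithfully. Assembling the three bijections (restrict type; fix $\LL\leftrightarrow$ orientation; kill $\Aut$) yields the claimed bijection with $\pic(\cc_0)$, where $\cc_0$ is the unique $\nn$-oriented quadratic algebra of oriented discriminant $\Delta$ and oriented parity $\pi$ furnished by Theorem~\ref{oruniqueness}.

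The main obstacle I anticipate is the second step: verifying \emph{precisely} that the twisted normalization $\LL=\Lambda^2\vv^\vee\otimes\nn$ corresponds, under Wood's bijection, to the data of an $\nn$-orientation of $\cc$, and that the residual $\gltw$-action (after this normalization) matches the $\Aut(\cc)$-action on modules orbit-for-orbit. One has to track how $\gl_1(\LL)$ and the determinant of the $\gl_2(\vv)$-part interact once $\LL$ is tied to $\Lambda^2\vv^\vee\otimes\nn$, and confirm that the ``extra copies of the Picard group'' created by orientation on the algebra side are counted by exactly the same overcounting that passing from $\gl_2\times\gl_1$ to $\gltw$ removes on the forms' side. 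Everything else --- restricting along a type-preserving bijection, and reading off that a rigid object has trivial automorphism group --- is formal once Theorems~\ref{goalclass}, \ref{Woodgl2gl1}, and \ref{oruniqueness} are in hand.
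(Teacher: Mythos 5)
Your steps 3 and 4 are exactly what the paper does: once one has a bijection between $\gltw$-classes of primitive $\nn$-twisted forms and isomorphism classes of triples $(\cc,\theta,\mm)$, the conclusion follows from type-preservation (Proposition~\ref{natortype}), uniqueness of the oriented algebra of given oriented type (Theorem~\ref{oruniqueness}), and triviality of $\Aut(\cc_0,\theta)$ when $2$ is not a zero divisor (Proposition~\ref{auto}). But your route to that bijection is where the genuine gap lies, and you flag it yourself without closing it. The paper does not descend from the unoriented $\gl_2\times\gl_1$ statement at all: it invokes the oriented/twisted version of Wood's bijection (Theorem~\ref{Woodgltw}, the $\nn$-twisted analogue of Wood's Theorems~5.1 and 5.2) as a separate input, and only then restricts by natural type. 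Your plan instead lands, after step 1, on $\faktor{\pic(\cc_0)}{\sim}$ (Picard classes modulo $\Aut(\cc_0)$), and proposes to ``un-quotient'' this by pinning down $\LL=\Lambda^2\vv^\vee\otimes_{\os}\nn$ and expecting a commuting square of forgetful maps. That cannot work as stated: a quotient set does not determine the set it came from, so no formal argument recovers $\pic(\cc_0)$ from $\faktor{\pic(\cc_0)}{\sim}$; one must construct the finer bijection directly, i.e.\ show that Wood's local construction equips $\cc$ with a canonical isomorphism $\cc/\os\simeq\Lambda^2\vv\otimes\LL\simeq\nn^\vee$ and that $\gltw$-equivalences correspond precisely to orientation-preserving isomorphisms of triples.

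Moreover, the fiber bookkeeping you hope is ``exactly the overcounting removed by passing from $\gl_2\times\gl_1$ to $\gltw$'' is more delicate than an $\Aut$-orbit count: as the diagram in the introduction records, the $\gltw$-classes of $\nn$-twisted forms of a fixed (unoriented) type correspond to a disjoint union of $\kappa$ copies of $\pic(\cc_0)$, where $\kappa$ counts the pairs $(\varepsilon^2\Delta_W,\varepsilon\Pi)$ for $\varepsilon\in\Gamma(S,\os)^\times$ (see Example~\ref{exdelta}: rescaling the orientation by $\varepsilon$ changes the form by $[\varepsilon a,\varepsilon b,\varepsilon c]$, which is a different $\gltw$-class since the twisted action fixes the discriminant on the nose, Remark~\ref{fix}). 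So the vertical comparison with the bottom row mixes both the automorphism identifications and this unit-scaling multiplicity; fixing the \emph{natural} discriminant and parity is precisely what selects one copy, and this selection only makes sense after the oriented bijection and Proposition~\ref{natortype} are in place. In short: supply (or cite) the analogue of Theorem~\ref{Woodgltw} together with its type-compatibility, and your argument becomes the paper's proof; without it, step 2 is a missing theorem, not a formal verification.
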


\begin{center}
	\begin{tikzcd}
		\begin{minipage}{.36\linewidth}
			$\faktor{\left\{
				\begin{array}{c}
					primitive~\nn \mhyphen twisted \\
					quadratic~forms~of \\
					natural~type~ (\delta,\pi,\nn)
				\end{array}
				\right\}}{\gltw}$
		\end{minipage}
		\arrow[r,leftrightarrow,"1~:~1"]
		& ~~\pic(\cc_0)
	\end{tikzcd}
\end{center}

Notice that the bijection can be made explicit, as in Corollary~\ref{expl} for instance.

\vspace{.5cm}

When $2$ is a zero divisor, one may wonder what kind of definitions or invariants one should consider to preserve such a parametrization of the Picard group. In the affine case, a partial answer is given by the aforementioned Artin-Schreier group \cite{Voightlibre}. On another side, still in the affine case, one can check carefully that Wood's construction of the quadratic algebra attached to a linear binary quadratic form $(\vv,\LL,q)$ corresponds to the even Clifford algebra of $q$, described by Kneser. Without any assumption on the base ring $R$, Kneser managed to derive a bijection with the Picard group of a given quadratic algebra over $R$ by fixing the even Clifford algebra, but the definitions and actions were different; in particular, the group acting on his quadratic maps was $\gl_2(R)$.

\vspace{.5cm}

Our notations are chosen to be as close as possible to Wood's in \cite{Wood}. Parts of the present article can be seen as a complement to \cite{Wood}, and would be more easily understandable with it nearby.

The organization of this article is as follows: in Subsection~\ref{secdefi}, we recall Wood's definitions we will work with, especially the discriminant. In Subsection~\ref{secpar}, we introduce the parity. When $2$ is not a zero divisor, we prove the main results regarding the classification of quadratic algebras in Subsection~\ref{secuni}, in particular Theorem~\ref{goalclass}. Then, we recall Wood's definition of linear binary quadratic forms and we relate them to quadratic algebras in Subsection~\ref{secgl2gl1}. Next, Subsection~\ref{sectwori} is devoted to the notion of orientation, together with twisted quadratic forms. In Subsection~\ref{sectwdiscpar}, we define the oriented and natural discriminants and parities. Finally, Subsection~\ref{sectwpic} completes the proof of Theorem~\ref{goal} by proving the uniqueness of an oriented quadratic algebra of given oriented type, when $2$ is not a zero divisor, and the resulting correspondence with the Picard group of this quadratic algebra.

\paragraph{Notations} Throughout this paper, we let $S$ be a scheme, and we denote by $\os$ its structure sheaf. It is unital and associative, as are all other rings and schemes we shall consider.

The Picard group of $S$ is denoted by $\pic(S)$. When we assume that $2$ is not a zero divisor on $S$, we mean that $2$ is not a zero divisor in $\Gamma(U,\os)$ for any open subset $U \subseteq S$.

Given a sheaf of $\os$-modules $\mm$, we write $\mm^{\vee}$ for the sheaf $\sheafhom(\mm,\os)$ of homomorphisms of $\os$-modules from $\mm$ to $\os$. If $U$ is an open subset of $S$, and if $\mm$ is a sheaf of rings or modules, we denote by $\Gamma(U,\mm)$ the ring or $\secs{U}$-module of sections of $\mm$ over $U$.

Let $R$ be a ring, and let ${r_1,\ldots,r_m \in R}$. We denote by ${\left\langle r_1,\ldots,r_m \right\rangle}$ the ideal generated by ${r_1,\ldots,r_m}$.

Let $R$ be a ring and let $M$ be a free $R$-module. If $(x_1,\ldots,x_n)$ is an $R$-basis of $M$, then $(x_1^{\vee},\ldots,x_n^{\vee})$ is its dual basis, and in particular a basis of $M^{\vee}$.

\paragraph{Acknowledgements.}

This work is part of my PhD at the Institut de Mathématiques de Toulouse. I particularly thank Jean Gillibert and Marc Perret for their support all along this work, both mathematically and on a personal level. I am grateful to them and to the anonymous referee for their careful reading and for the numerous improvements they suggested.

\section{Classification of quadratic algebras when $2$ is not a zero divisor}\label{classification}

\subsection{Definitions}\label{secdefi}

All the definitions of this Subsection are extracted from \cite{Wood}.

\begin{defi}
	A \textit{quadratic algebra over $S$} is a locally free $\os$-algebra of rank $2$.
\end{defi}

\begin{ex} \label{exfree}
	Over a scheme $S$, the sheaf ${\cc = \faktor{\os[\tau]}{\left\langle \tau^2+r\tau+s \right\rangle}}$ where ${r,s \in \Gamma(S,\os)}$ is a free quadratic algebra.
\end{ex}

\begin{ex} \label{exnotfree}
	Let $R$ be an integral domain, let $I$ be an invertible ideal of $R$. Let ${\rho, \sigma \in R}$ and define
	$$C(I,\rho,\sigma) := R \oplus xI$$
	as a subring of $\faktor{R[x]}{\left\langle x^2 + \rho x + \sigma \right\rangle}$. Then $C(I,\rho,\sigma)$ is a quadratic algebra, since there exists an open covering $(U_i)_i$ of $\spec(R)$ such that $I_{\vert U_i}$ is free of rank $1$, hence $C(I,\rho,\sigma)_{\vert U_i}$ is free of rank $2$. If $\alpha_i$ denotes a generator of $I_{\vert U_i}$, then
	$$C(I,\rho,\sigma)_{\vert U_i} = R_{\vert U_i} \oplus x\alpha_i R_{\vert U_i} \simeq \faktor{R_{\vert U_i}[\tau_i]}{\left\langle \tau_i^2 + \rho \alpha_i \tau_i + \sigma \alpha_i^2 \right\rangle}.$$
	
	More generally, one can take ${\rho \in I^{-1}}$ and ${\sigma \in I^{-2}}$ to get a richer family of examples.
\end{ex}

\begin{prop} \label{freedirectsummand}
	Let $\cc$ be a quadratic algebra over $S$. Then $\cc/\os$ is a locally free $\os$-module of rank $1$. Moreover, if $\cc$ is free as an $\os$-module, then there exists ${\tau \in \Gamma(S,\cc)}$ such that ${\cc \simeq \os \oplus \tau \os}$ as $\os$-modules. In particular, ${\cc/\os \simeq \tau \os}$ is free.
\end{prop}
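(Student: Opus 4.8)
I would prove the three assertions in the order: the ``moreover'' (free case) first, then the local freeness of $\cc/\os$, then the ``in particular''.

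The crux is the free case, and the plan there is to use the algebra structure, not merely the module structure. Assume $\cc$ is free of rank $2$ over $\os$ and fix an $\os$-basis $\{e_1,e_2\}$ of $\Gamma(S,\cc)$; write the unit as $1=ae_1+be_2$ and the products $e_1^2=\alpha e_1+\beta e_2$ and $e_2e_1=\gamma e_1+\delta e_2$ with all structure constants in $\Gamma(S,\os)$. Comparing $e_1$-coefficients in the identity $1\cdot e_1=e_1$ yields the \emph{global} relation $a\alpha+b\gamma=1$. Hence $\begin{pmatrix} a & b\\ -\gamma & \alpha\end{pmatrix}\in\gl_2(\Gamma(S,\os))$, so $\{1,\tau\}$ with $\tau:=-\gamma e_1+\alpha e_2$ is again an $\os$-basis of $\cc$. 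Therefore $\cc\simeq\os\oplus\tau\os$ as $\os$-modules, and the ``in particular'' is immediate since $\cc/\os\simeq\tau\os$ is then free of rank $1$. I expect this to be the only slightly delicate point: what makes it go through is that the relation $a\alpha+b\gamma=1$ is supplied for free by $1\cdot e_1=e_1$, whereas a purely module-theoretic attempt to complete the unimodular section $1$ to a basis can fail over a non-affine base.

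For the first assertion, that $\cc/\os$ is locally free of rank $1$ in general, I would localise. As $\cc$ is locally free of rank $2$, there is an open cover of $S$ on each member $U$ of which $\cc$ is free of rank $2$; applying the free case over $U$ shows $(\cc/\os)_{\vert U}\simeq\ou$, so $\cc/\os$ is locally free of rank $1$. In passing this exhibits $\os\to\cc$ as locally split, hence injective, so that the quotient $\cc/\os$ is meaningful; alternatively one may argue at the level of stalks, using that a finitely presented $\os$-module all of whose stalks are free of rank $1$ is locally free of rank $1$.
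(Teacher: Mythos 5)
Your proposal is correct and follows essentially the same route as the paper: in the free case you exploit the algebra structure by expanding $1\cdot e_1=e_1$ in structure constants, which is exactly the paper's step of multiplying the relation $1=r\alpha+s\beta$ by $\alpha$, and the resulting unimodular matrix and new basis $(1,\tau)$ coincide with the paper's (up to renaming of the constants); the general statement is then obtained by restricting to an open cover where $\cc$ is free, just as in the paper.
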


\begin{proof}
	The fact that $\cc/\os$ is locally free of rank $1$ follows from the second part of the statement, whose proof is a straightforward generalization of \cite[Lemma~3.2]{Voightlibre}. Let us detail it here.
		
	If $\cc$ is free, then there exist ${\alpha, \beta \in \Gamma(S,\cc)}$ such that ${\cc \simeq \alpha \os \oplus \beta \os}$ as $\os$-modules. This implies that there exist ${r,s,t,u,v,w \in \Gamma(S,\os)}$ such that ${1 = r\alpha + s\beta}$, ${\alpha^2 = t\alpha + u\beta}$ and ${\alpha\beta = v\alpha + w\beta}$.
	
	Multiplying the first equation by $\alpha$, we get
	$$\alpha = r \alpha^2 + s\alpha\beta = (rt + sv)\alpha + (ru + sw)\beta.$$
	Since $(\alpha,\beta)$ is an $\os$-basis of $\cc$, we infer that ${1 = rt+sv}$. Therefore, the matrix $\begin{pmatrix} r & s \\ -v & t \end{pmatrix}$ has determinant $1$, and
	$$\begin{pmatrix} r & s \\ -v & t \end{pmatrix} \cdot \begin{pmatrix} \alpha \\ \beta \end{pmatrix} = \begin{pmatrix} 1 \\ \tau \end{pmatrix}$$
	where ${\tau := -v\alpha + t\beta}$. Thus, $(1,\tau)$ is another $\os$-basis, hence ${\alpha \os \oplus \beta \os = \os \oplus \tau \os}$.
\end{proof}

\begin{rem} \label{remcohom}
	In the affine case, it is true that a quadratic algebra $C$ over a ring $R$ is isomorphic to ${R \oplus C/R}$ as $R$-modules. This can be proved with Nakayama's Lemma (\cite[Lemma~1.3]{Voightcsuros}).
	
	 However, it does not extend to schemes, as Example~\ref{expassplitte} shows. The obstruction can be viewed as the fact that the following exact sequence
	 $$0 \longrightarrow \os \longrightarrow \cc \longrightarrow \cc/\os \longrightarrow 0$$
	 induces the long exact sequence
	 $$0 \longrightarrow H^0(S,\os) \longrightarrow H^0(S,\cc) \longrightarrow H^0(S,\cc/\os) \longrightarrow H^1(S,\os) \longrightarrow \ldots$$
	 In general, there is no reason for $H^1(S,\os)$ to be trivial, although it does when $S$ is an affine scheme, by the vanishing of sheaf cohomology in the affine case \cite[Lemma~5.1.9.2]{EGA1}.
\end{rem}

\begin{ex} \label{expassplitte}
	Let $E$ be an elliptic curve over $\F_2$ with a $2$-torsion point $P$ (for example the curve defined by the equation ${y^2+xy = x^3+x^2+1}$ with the point ${P=(0,1)}$). The multiplication-by-$2$ map ${[2] \colon E \longrightarrow E}$ is a finite morphism of degree 4, and it induces the multiplication-by-$2$ map on ${H^1(E, \mathcal{O}_E) \simeq \F_2}$, which is zero in our setting. It follows that the quartic algebra ${\mathcal{O}_E \longrightarrow [2]_*\mathcal{O}_E}$ has no linear splitting. This was pointed out by Bhatt \cite[Example~2.11]{Bhatt}.
	
	Quotienting out $E$ by $\left\langle P \right\rangle$ yields a $2$-isogeny ${\varphi \colon E \longrightarrow E' := E/\left\langle P \right\rangle}$. It has been well-known \cite[Theorem~III.6.1]{Silverman} that the dual isogeny ${\varphi^{\vee} \colon E' \longrightarrow E}$ satisfies ${\varphi^{\vee} \circ \varphi = [2]}$. Hence, multiplication by $2$ on the cohomology factors as
	\begin{center}
		\begin{tikzcd}
			H^1(E,\mathcal{O}_E) \arrow[r,"H^1\varphi"] & H^1(E',\mathcal{O}_{E'}) \arrow[r,"H^1\varphi^{\vee}"] & H^1(E,\mathcal{O}_E)
		\end{tikzcd}
	\end{center}
	where all $H^1$ are $1$-dimensional $\F_2$-vector spaces. The composite map being zero, at least one of the maps is zero. By the same reasoning as above, one of the corresponding quadratic algebras has no linear section.
\end{ex}

Proposition~\ref{freedirectsummand} implies that every free quadratic algebra $\cc$ over a scheme $S$ can be written as in Example~\ref{exfree}. A change of basis on such an algebra corresponds to another choice of generator $\tau'$, which modifies the defining equation of $\cc$. We will often use it so we make it explicit: if ${\tau' = \varepsilon \tau + \alpha}$ for some ${\varepsilon \in \Gamma(S,\os)^{\times}}$ and ${\alpha \in \Gamma(S,\os)}$, then the equation ${\tau^2+r\tau+s = 0}$ becomes
\begin{equation} \label{eq}
	\tau'^2 + (r\varepsilon - 2\alpha)\tau' + \alpha^2-r\alpha\varepsilon + s\varepsilon^2 = 0.
\end{equation}

Moreover, if $U \subseteq S$ is an open subset such that $\ccu$ is free of rank $2$, then we can choose an $\ou$-basis of the shape $(1,\tau)$ for some quadratic section ${\tau \in \Gamma(U,\cc)}$. It induces the $\ou$-basis $(\overline{\tau})$ of $(\cc/\os)_{\vert U}$, where $\overline{\tau}$ is the image of $\tau$ in $(\cc/\os)_{\vert U}$.

\begin{defi} \label{deltaw}
	The \textit{discriminant of a quadratic algebra $\cc$} is the pair $(\Delta(\cc),(\cc/\os)^{\vee})$ where $\Delta(\cc)$ is the global section of $((\cc/\os)^{\vee})^{\otimes 2}$ locally defined as follows. Let $U \subseteq S$ be some open subset such that $\ccu$ is free; take an $\ou$-basis $(1,\tau)$ where ${\tau^2 + r\tau + s = 0}$ for some ${r,s \in \secs{U}}$, then ${\Delta(\cc)_{\vert U} := (r^2-4s)(\ortau^{\vee}\otimes \ortau^{\vee})}$, with $\ortau$ the image of $\tau$ in $(\cc/\os)_{\vert U}$. By abuse of language, we often refer to $\Delta(\cc)$ as the discriminant of $\cc$.
	
	A discriminant is a pair $(\Delta,\nn)$ where $\nn$ is a locally free $\os$-module of rank $1$ and $\Delta $ is a global section of $\nn^{\otimes 2}$. We say that two discriminants $(\Delta,\nn)$ and $(\Delta',\nn')$ are \textit{isomorphic} if there exists an isomorphism of $\os$-modules ${f \colon \nn \overset{\sim}{\longrightarrow} \nn'}$ such that ${\Delta' = f^{\otimes 2}(\Delta)}$.
\end{defi}

\begin{rem}
	Let ${r,s \in \Gamma(S,\os)}$ and $\varepsilon \in \Gamma(S,\os)^{\times}$. Notice that the two quadratic algebras $\faktor{\os[\tau]}{\left\langle \tau^2+r\tau+s \right\rangle}$ and $\faktor{\os[\tau]}{\left\langle \tau^2+\varepsilon r\tau+\varepsilon^2 s \right\rangle}$ are isomorphic through the ring homomorphism sending $\tau$ to $\varepsilon^{-1} \tau$, but the discriminant is multiplied by $\varepsilon^2$ when passing from the first to the second one. Therefore, we must allow flexibility on discriminants, so that we can multiply them by the square of any unit. This is why we shall deal with isomorphism classes of discriminants instead of the discriminants themselves. Over $\Z$, this does not make a difference, but for more general rings or schemes, we must take care of this.
\end{rem}

\begin{prop} \label{discwood}
	The discriminant $\Delta$ of a quadratic algebra is well-defined, and two isomorphic quadratic algebras have isomorphic discriminants.
\end{prop}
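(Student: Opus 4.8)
The plan is to prove the two assertions of Proposition~\ref{discwood} separately, both via a local computation using the base-change formula \eqref{eq}.

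\textbf{Well-definedness.} Let $\cc$ be a quadratic algebra. The section $\Delta_W(\cc)$ is defined locally on any open $U$ over which $\ccu$ is free, using a chosen basis $(1,\tau)$; I must check that the local expression $(r^2-4s)(\ortau^\vee\otimes\ortau^\vee)$ does not depend on the choice of $\tau$, and that the locally defined sections glue. For independence of the basis, recall from Proposition~\ref{freedirectsummand} that any two bases of the shape $(1,\tau)$, $(1,\tau')$ on the same $U$ are related by $\tau'=\varepsilon\tau+\alpha$ with $\varepsilon\in\Gamma(U,\os)^\times$ and $\alpha\in\Gamma(U,\os)$. By \eqref{eq}, the new coefficients are $r'=r\varepsilon-2\alpha$ and $s'=\alpha^2-r\alpha\varepsilon+s\varepsilon^2$, so a direct computation gives
\[
r'^2-4s' = (r\varepsilon-2\alpha)^2-4(\alpha^2-r\alpha\varepsilon+s\varepsilon^2) = \varepsilon^2(r^2-4s).
\]
On the other hand, $\ortau'=\varepsilon\ortau$ in $(\cc/\os)_{\vert U}$, hence $\ortau'^\vee=\varepsilon^{-1}\ortau^\vee$, so $(r'^2-4s')(\ortau'^\vee\otimes\ortau'^\vee)=\varepsilon^2(r^2-4s)\cdot\varepsilon^{-2}(\ortau^\vee\otimes\ortau^\vee)=(r^2-4s)(\ortau^\vee\otimes\ortau^\vee)$. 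Thus the local section is intrinsic to $U$. For gluing, if $U,U'$ are two such opens, on $U\cap U'$ one may choose a common refinement by opens on which $\cc$ is free and compare each of the two sections to the one computed in a basis over the refinement; by the independence just proved, they agree on the overlap. Since $((\cc/\os)^\vee)^{\otimes2}$ is a sheaf, the local sections glue to a well-defined global section $\Delta_W(\cc)\in\Gamma(S,((\cc/\os)^\vee)^{\otimes2})$.

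\textbf{Invariance under isomorphism.} Suppose $\phi\colon\cc\overset{\sim}{\to}\cc'$ is an isomorphism of $\os$-algebras. It induces an isomorphism $\overline{\phi}\colon\cc/\os\overset{\sim}{\to}\cc'/\os$ of $\os$-modules, hence an isomorphism $f:=(\overline{\phi}^\vee)^{\otimes2}\colon((\cc'/\os)^\vee)^{\otimes2}\overset{\sim}{\to}((\cc/\os)^\vee)^{\otimes2}$; equivalently $(\overline{\phi}^{\vee,-1})^{\otimes2}$ in the other direction, which is the map appearing in the definition of isomorphic Wood-discriminants. I claim $\Delta_W(\cc)=f(\Delta_W(\cc'))$, which is a local statement. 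Work over an open $U$ on which both $\ccu$ and $\cc'_{\vert U}$ are free (refine if necessary). Pick a basis $(1,\tau)$ of $\ccu$ with $\tau^2+r\tau+s=0$; then $(1,\phi(\tau))$ is a basis of $\cc'_{\vert U}$, and applying the algebra homomorphism $\phi$ to the relation gives $\phi(\tau)^2+r\phi(\tau)+s=0$, i.e.\ the defining coefficients $r',s'$ of $\cc'$ in this basis are exactly $r,s$. Hence $\Delta_W(\cc')_{\vert U}=(r^2-4s)(\overline{\phi(\tau)}^\vee\otimes\overline{\phi(\tau)}^\vee)$, and since $\overline{\phi(\tau)}=\overline{\phi}(\ortau)$ we get, after applying $f$, exactly $(r^2-4s)(\ortau^\vee\otimes\ortau^\vee)=\Delta_W(\cc)_{\vert U}$. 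As $U$ ranges over such opens this proves the global identity, so $(\Delta_W(\cc),(\cc/\os)^\vee)$ and $(\Delta_W(\cc'),(\cc'/\os)^\vee)$ are isomorphic Wood-discriminants.

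The only genuinely delicate point is bookkeeping: making sure the local sections glue despite $\cc$ not being globally free (handled by passing to common refinements and invoking the sheaf property), and keeping the duals and the direction of $f$ straight in the isomorphism-invariance part. Both the base-change identity $r'^2-4s'=\varepsilon^2(r^2-4s)$ and the compatibility of $\phi$ with the algebra relations are elementary once set up, so I expect no real obstacle beyond this careful localization argument.
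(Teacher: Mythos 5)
Your proof is correct and follows essentially the same route as the paper: the identity $r'^2-4s'=\varepsilon^2(r^2-4s)$ from Equation~(\ref{eq}) combined with $\overline{\tau'}^{\vee}=\varepsilon^{-1}\ortau^{\vee}$ for well-definedness and gluing, and the observation that $(1,\psi(\tau))$ is a basis of $\cc'$ satisfying the same relation for invariance under isomorphism. The only cosmetic difference is that you spell out the refinement/sheaf-gluing bookkeeping and the direction of the induced map $(\overline{\psi}^{\vee})^{\otimes 2}$ a bit more explicitly than the paper does.
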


This Proposition was already pointed by Wood, but as other results will be proved in a similar way in this paper, we give a detailed proof.

\begin{proof}
	Let $\cc$ be a quadratic algebra. Assume first that $\cc$ is free. By Proposition~\ref{freedirectsummand}, we can write ${\cc = \faktor{\os[\tau]}{\left\langle \tau^2+r\tau+s \right\rangle}}$ for some ${r,s \in \Gamma(S,\os)}$. If ${\tau' := \varepsilon \tau + \alpha}$ for some ${\varepsilon \in \Gamma(S,\os)^{\times}}$ and ${\alpha \in \Gamma(S,\os)}$, then Equation (\ref{eq}) enables us to compute that
	\begin{align*}
		\Delta(\cc) & = ((r\varepsilon-2\alpha)^2 - 4(\alpha^2-r\alpha\varepsilon + s\varepsilon^2))\overline{\tau'}^{\vee}\otimes \overline{\tau'}^{\vee} \\
		& = (r^2\varepsilon^2 - 4s\varepsilon^2)(\varepsilon^{-1}\ortau^{\vee} \otimes \varepsilon^{-1}\ortau^{\vee}) = (r^2-4s)\ortau^{\vee}\otimes \ortau^{\vee},
	\end{align*}
	which proves that the discriminant does not depend on the choice of $\tau$, hence is well-defined. In the general case, the above reasoning is valid over $\ccu$ for every open subset $U \subseteq S$ such that $\ccu$ is free. Taking an open covering, we must check the compatibility on overlaps, but this essentially amounts to a change of basis, a case we have just treated. Thus, $\Delta$ is globally well-defined.
	
	Finally, given an isomorphism of $\os$-algebras ${\psi \colon \cc \overset{\sim}{\longrightarrow} \cc'}$, we need to check that the discriminants of $\cc$ and $\cc'$ are isomorphic. Let $U \subseteq S$ be some open subset such that $\ccu$ is free. If $(1,\tau)$ is an $\ou$-basis of $\ccu$, then $\ccu'$ has $\ou$-basis $(1,\psi(\tau))$. In particular, $\tau$ and $\psi(\tau)$ satisfy the same equation, leading to the same local computation of the discriminant. We infer that $\psi$ induces an isomorphism of $\os$-modules ${\overline{\psi} \colon \cc/\os \overset{\sim}{\longrightarrow} \cc'/\os}$ such that ${\Delta(\cc) = \overline{\psi}^{\vee\otimes 2}(\Delta(\cc'))}$, as desired.
\end{proof}

\begin{ex}
	Let us consider again the quadratic algebra ${C = C(I,\rho,\sigma)}$ from Example~\ref{exnotfree}. Then $\Delta(C)$ is a global section of $((xI)^{\vee})^{\otimes 2}$, which is locally given on $U_i$ by $\alpha_i^2(\rho^2-4\sigma)\overline{\tau_i}^{\vee}\otimes\overline{\tau_i}^{\vee}$. One can check that the family $(\alpha_i\overline{\tau_i}^{\vee})_i$ is compatible on overlaps; indeed, on $U_i \cap U_j$, both $\alpha_i$ and $\alpha_j$ are generators of $I_{\vert U_i \cap U_j}$, hence there exists ${\varepsilon_{i,j} \in R_{\vert U_i\cap U_j}^{\times}}$ such that ${\alpha_j = \varepsilon_{i,j} \alpha_i}$. We also have ${\overline{\tau_j} = x\alpha_j = x\varepsilon_{i,j}\alpha_i = \varepsilon_{i,j} \overline{\tau_i}}$, hence ${\overline{\tau_j}^{\vee} = \varepsilon_{i,j}^{-1}\overline{\tau_i}^{\vee}}$, leading to ${\alpha_j\overline{\tau_j}^{\vee} = \alpha_i\overline{\tau_i}^{\vee}}$ as desired. Therefore, there exists a linear form ${\varphi \in (xI)^{\vee}}$ such that ${\Delta(C) = (\rho^2-4\sigma)\varphi\otimes\varphi}$.
\end{ex}

Over $\Z$, it has been well-known that isomorphism classes of quadratic orders are parametrized by the discriminant. However, the following example shows that it is not necessarily the case over more general base rings.

\begin{ex}\label{cexpar}
	Let ${R = \Z[\sqrt{8}]}$, let ${\Delta \in R}$ be a nonsquare element such that $4$ divides $\Delta$. Then the two quadratic algebras
	$$C_1 := \faktor{R[\tau_1]}{\langle \tau_1^2-\frac{\Delta}{4} \rangle} \text{~~~and~~~} C_2 := \faktor{R[\tau_2]}{\langle \tau_2^2+\sqrt{8}\tau_2-\frac{\Delta-8}{4} \rangle}$$
	have isomorphic discriminants ${\Delta(C_1) = \Delta \overline{\tau_1}^{\vee}\otimes\overline{\tau_1}^{\vee}}$ and ${\Delta(C_2) = \Delta \overline{\tau_2}^{\vee}\otimes\overline{\tau_2}^{\vee}}$, whereas they are not isomorphic; the obstruction being the fact that $2$ does not divide $\sqrt{8}$ in $R$.
\end{ex}

\subsection{The parity of a quadratic algebra} \label{secpar}

The notion of parity of a quadratic form seems to have appeared first in Towber's work (\cite{Towber}). This enabled him to get rid from a previous condition that the base ring $R$ should satisfy to preserve a group structure on quadratic forms, which can be formulated as the implication ${x^2 \equiv y^2 \pmod{4R} \Longrightarrow x \equiv y \pmod{2R}}$. Clearly, this condition is not satisfied in Example~\ref{cexpar}, since ${\sqrt{8}^2 \equiv 2^2 \pmod{4\Z[\sqrt{8}]}}$, but ${\sqrt{8} \not\equiv 2 \pmod{2\Z[\sqrt{8}]}}$. The parity of a quadratic form consists in the parity of its middle coefficient; it is a quite simple invariant which, surprisingly, is enough to solve the problem of classification of quadratic algebras, at least when $2$ is not a zero divisor (an assumption we shall not make in this Subsection). In \cite[Theorem~4.3]{Voightlibre}, Voight implicitly used the parity to classify quadratic algebras $\cc$ over $S = \spec(R)$ such that $\cc/\os$ is free.

Working over a general scheme $S$, there may be a small issue with the reduction modulo $2$. For any open subset ${U \subseteq S}$ and for any $\os$-module $\nn$, we denote by ${\chi_{U,\nn} \colon \faktor{\Gamma(U,\nn)}{2\Gamma(U,\nn)} \longrightarrow \Gamma(U,\nn/2\nn)}$ the canonical homomorphism of rings. Then $\chi_{U,\nn}$ is always injective, but when $U$ is not affine, it may fail to be surjective. Concretely, the parity will be locally defined in the image of this homomorphism.

\begin{defi} \label{defpargen}
	Let $\cc$ be a quadratic algebra. The \textit{parity} of $\cc$ is the pair $(\Pi(\cc),(\cc/\os)^{\vee})$ where $\Pi(\cc)$ is the global section of $\faktor{(\cc/\os)^{\vee}}{2(\cc/\os)^{\vee}}$ locally defined as follows. If ${U \subseteq S}$ is an open subset such that $\ccu$ is free of rank $2$, then ${\ccu \simeq \faktor{\ou[\tau]}{\left\langle \tau^2+r\tau+s \right\rangle}}$ for some ${r,s \in \Gamma(U,\os)}$, and we define $\Pi(\cc)_{\vert U}$ to be the image under $\chi_{U,(\cc/\os)^{\vee}}$ of the class of $r \ortau^{\vee}$ modulo $2\Gamma(U,(\cc/\os)^{\vee})$. Most of the time, we refer to $\Pi(\cc)$ as the parity of $\cc$, by abuse of language.
	
	As for the discriminant, a parity is of the form $(\Pi,\nn)$, with $\nn$ a locally free $\os$-module of rank $1$ and $\Pi$ a global section of $\nn/2\nn$. We say that two parities $(\Pi,\nn)$ and $(\Pi',\nn')$ are \textit{isomorphic} if there exists an isomorphism of $\os$-modules ${f \colon \nn \overset{\sim}{\longrightarrow} \nn'}$ such that ${\overline{f}(\Pi) = \Pi'}$, where ${\overline{f} \colon \nn/2\nn \longrightarrow \nn'/2\nn'}$ is induced by $f$.
\end{defi}

\begin{prop} \label{parinv}
	The parity of a quadratic algebra is well-defined, and two isomorphic quadratic algebras have isomorphic parities.
\end{prop}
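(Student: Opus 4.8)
The argument runs exactly parallel to the proof of Proposition~\ref{discwood}, the only extra care being the bookkeeping of the reduction modulo $2$ and of the canonical maps $\chi_{U,\nn}$. First I would settle the local well-definedness, i.e. that $\Pi(\cc)_{\vert U}$ does not depend on the chosen quadratic section. So let $\ccu$ be free, write $\ccu \simeq \ou[\tau]/\left\langle \tau^2+r\tau+s \right\rangle$, and let $(1,\tau')$ be another $\ou$-basis; as recalled before Equation~(\ref{eq}), necessarily $\tau' = \varepsilon\tau + \alpha$ with $\varepsilon \in \Gamma(U,\os)^{\times}$ and $\alpha \in \Gamma(U,\os)$, and by Equation~(\ref{eq}) the middle coefficient attached to $\tau'$ is $r\varepsilon - 2\alpha$. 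Since $\overline{\tau'} = \varepsilon\ortau$ in $(\cc/\os)_{\vert U}$ we get $\overline{\tau'}^{\vee} = \varepsilon^{-1}\ortau^{\vee}$, hence
$$(r\varepsilon - 2\alpha)\overline{\tau'}^{\vee} = r\ortau^{\vee} - 2\alpha\varepsilon^{-1}\ortau^{\vee} \equiv r\ortau^{\vee} \pmod{2\Gamma(U,(\cc/\os)^{\vee})}.$$
As $\chi_{U,(\cc/\os)^{\vee}}$ is a well-defined map on $\Gamma(U,(\cc/\os)^{\vee})/2\Gamma(U,(\cc/\os)^{\vee})$, the two recipes give the same element of $\Gamma(U,(\cc/\os)^{\vee}/2(\cc/\os)^{\vee})$.

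Next I would globalize. Cover $S$ by open subsets over which $\cc$ is free; the previous step attaches to each such $U$ a well-defined section $\Pi(\cc)_{\vert U}$. On an overlap $U\cap V$, the two chosen bases of $\cc_{\vert U\cap V}$ differ by a transformation $\tau' = \varepsilon\tau+\alpha$, and since the maps $\chi$ and the reduction modulo $2$ are compatible with restriction, the computation of the first step shows that $\Pi(\cc)_{\vert U}$ and $\Pi(\cc)_{\vert V}$ agree on $U\cap V$. Because $(\cc/\os)^{\vee}/2(\cc/\os)^{\vee}$ is a sheaf, these local sections glue to a global section $\Pi(\cc)$, which by construction is locally of the prescribed form; this is precisely the content of Definition~\ref{defpargen}, so the parity is well-defined.

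Finally, for invariance, let $\psi\colon \cc \overset{\sim}{\longrightarrow} \cc'$ be an isomorphism of $\os$-algebras, and let $U$ be an open subset over which $\ccu$, hence $\ccu'$, is free. If $(1,\tau)$ is an $\ou$-basis of $\ccu$, then $(1,\psi(\tau))$ is an $\ou$-basis of $\ccu'$ and $\psi(\tau)$ satisfies the same quadratic equation $X^2+rX+s=0$ as $\tau$, so the local recipe for $\Pi(\cc')$ uses the same $r$. Thus the induced isomorphism $\overline{\psi}\colon \cc/\os \overset{\sim}{\longrightarrow} \cc'/\os$ has transpose $\overline{\psi}^{\vee}\colon (\cc'/\os)^{\vee} \overset{\sim}{\longrightarrow} (\cc/\os)^{\vee}$ sending $r\overline{\psi(\tau)}^{\vee}$ to $r\ortau^{\vee}$; passing to the quotients by $2$, gluing, and using the compatibility with restriction once more, the reduction of $\overline\psi^{\vee}$ carries $\Pi(\cc')$ to $\Pi(\cc)$, which is exactly what it means for $(\Pi(\cc),(\cc/\os)^{\vee})$ and $(\Pi(\cc'),(\cc'/\os)^{\vee})$ to be isomorphic. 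The only real (and mild) obstacle throughout is to handle the reduction modulo $2$ cleanly — keeping every class inside the image of the possibly non-surjective maps $\chi_{U,\nn}$ and invoking their compatibility with restriction — after which each verification collapses to the displayed identity of the first step, just as in Proposition~\ref{discwood}.
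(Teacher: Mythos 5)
Your argument is correct and is essentially the paper's proof: the paper simply says the result follows the same arguments as Proposition~\ref{discwood} and records the key congruence $(r\varepsilon - 2\alpha)\overline{\tau'}^{\vee} \equiv r\ortau^{\vee} \pmod{2}$, which is exactly your central computation, followed by the same gluing and the same use of $\psi(\tau)$ satisfying the same quadratic equation. Your extra care with the maps $\chi_{U,\nn}$ and the transpose $\overline{\psi}^{\vee}$ is consistent with the paper's conventions and adds nothing problematic.
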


\begin{proof}
	The proof follows the same arguments as the one of Proposition~\ref{discwood}. In particular, if ${\cc = \faktor{\os[\tau]}{\left\langle \tau^2+r\tau+s \right\rangle}}$ for some ${r,s \in \Gamma(S,\os)}$, and if ${\tau' := \varepsilon \tau + \alpha}$ for some ${\varepsilon \in \Gamma(S,\os)^{\times}}$ and ${\alpha \in \Gamma(S,\os)}$, then we have ${\Pi(\cc) \equiv (r\varepsilon - 2\alpha)\overline{\tau'}^{\vee} \equiv r\ortau^{\vee} \pmod{2\Gamma(S,\os)}}$.
\end{proof}

\begin{ex}
	Continuing with ${C = C(I,\rho,\sigma)}$ from Example~\ref{exnotfree}, for all $i$ we have ${\Pi(C)_{\vert U_i} \equiv \rho\alpha_i\overline{\tau_i}^{\vee} \pmod{2(xI_{\vert U_i})^{\vee}}}$. In that case, since $\rho$ is globally defined, one can check that we have a global lift of the parity given by ${\Pi(C) \equiv \rho \varphi \pmod{2(xI)^{\vee}}}$ where ${\varphi \in (xI)^{\vee}}$ is the glueing of the $\alpha_i\overline{\tau_i}^{\vee}$'s.
\end{ex}

\begin{rem}
	Proposition~\ref{parinv} fixes the problem encountered in Example~\ref{cexpar}. Although the two non-isomorphic quadratic algebras $C_1$ and $C_2$ have isomorphic discriminants, they have non-isomorphic parities since ${\Pi(C_1) \equiv 0}$ and ${\Pi(C_2) \equiv \sqrt{8}\overline{\tau_2}^{\vee} \not\equiv 0 \pmod{2\Z[\sqrt{8}]^{\vee}}}$.
\end{rem}

\subsection{Discriminant and parity as a complete invariant when $2$ is not a zero divisor} \label{secuni}

The goal of this Subsection is to prove Theorem~\ref{goalclass}.

\begin{defi} \label{deftype}
	The \textit{type of a quadratic algebra $\cc$} is the triple $(\Delta(\cc),\Pi(\cc),(\cc/\os)^{\vee})$.
	
	A type is a triple $(\Delta,\Pi,\nn)$ with $\nn$ a locally free $\os$-module of rank $1$, $\Delta$ a global section of $\nn^{\otimes 2}$ and $\Pi$ a global section of $\nn/2\nn$. We say that two types $(\Delta,\Pi,\nn)$ and $(\Delta',\Pi',\nn')$ are \textit{isomorphic} if there exists an isomorphism of $\os$-modules ${f \colon \nn \overset{\sim}{\longrightarrow} \nn'}$ such that ${f^{\otimes 2}(\Delta) = \Delta'}$ and ${\bar{f}(\Pi) = \Pi'}$, where ${\bar{f} \colon \nn/2\nn \longrightarrow \nn'/2\nn'}$ is induced by $f$.
	
	A triple $(\Delta,\Pi,\nn)$ is \textit{valid} if it is isomorphic to the type of some quadratic algebra.
\end{defi}

We shall give a characterization of valid triples independent from quadratic algebras. This will be done in Proposition~\ref{constr}.

\begin{prop} \label{typeinv}
	Two isomorphic quadratic algebras have isomorphic types.
\end{prop}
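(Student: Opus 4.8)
The plan is to combine the two invariance results already established, namely Proposition~\ref{discwood} (isomorphic quadratic algebras have isomorphic Wood-discriminants) and Proposition~\ref{parinv} (isomorphic quadratic algebras have isomorphic parities), and to check that the isomorphism of $\os$-modules witnessing these two statements can be taken to be \emph{the same} map. This common map is the natural candidate: given an isomorphism of $\os$-algebras $\psi \colon \cc \overset{\sim}{\longrightarrow} \cc'$, it sends $\os$ to $\os$ (it is a morphism of unital $\os$-algebras), hence descends to an isomorphism of $\os$-modules $\overline{\psi} \colon \cc/\os \overset{\sim}{\longrightarrow} \cc'/\os$, and dually to an isomorphism $f := \overline{\psi}^{\vee} \colon (\cc'/\os)^{\vee} \overset{\sim}{\longrightarrow} (\cc/\os)^{\vee}$ (or its inverse, depending on the direction one wants).

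First I would recall, following the last paragraph of the proof of Proposition~\ref{discwood}, that for any open $U \subseteq S$ over which $\ccu$ is free with $\os$-basis $(1,\tau)$ satisfying $\tau^2 + r\tau + s = 0$, the algebra $\cc'_{\vert U}$ is free with $\os$-basis $(1,\psi(\tau))$, and $\psi(\tau)$ satisfies the \emph{same} equation $\psi(\tau)^2 + r\psi(\tau) + s = 0$. Hence the local expression of the Wood-discriminant of $\cc'$ in the basis dual to $\overline{\psi(\tau)}$ is $(r^2 - 4s)\,\overline{\psi(\tau)}^{\vee} \otimes \overline{\psi(\tau)}^{\vee}$, and since $\overline{\psi}$ sends $\ortau$ to $\overline{\psi(\tau)}$, the map $f = \overline{\psi}^{\vee}$ satisfies $f^{\otimes 2}(\Delta_W(\cc')) = \Delta_W(\cc)$ locally on $U$, hence globally by taking an open cover. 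This is exactly the conclusion of Proposition~\ref{discwood} with the specific isomorphism $f$ made explicit.

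Next I would run the same local computation for the parity: in the basis $(1,\psi(\tau))$ the middle coefficient of the defining equation of $\cc'_{\vert U}$ is again $r$, so $\Pi(\cc')_{\vert U}$ is the class of $r\,\overline{\psi(\tau)}^{\vee}$, and the induced map $\overline{f} \colon (\cc'/\os)^{\vee}/2(\cc'/\os)^{\vee} \longrightarrow (\cc/\os)^{\vee}/2(\cc/\os)^{\vee}$ sends it to the class of $r\,\ortau^{\vee}$, i.e. $\overline{f}(\Pi(\cc')) = \Pi(\cc)$ on $U$, hence globally. Since $f$ is a single isomorphism of $\os$-modules simultaneously matching the Wood-discriminants and the parities, it is by Definition~\ref{deftype} an isomorphism of types from the type of $\cc'$ to the type of $\cc$, which is what we want.

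There is essentially no serious obstacle here; the whole argument is a bookkeeping refinement of the two preceding propositions, the only point requiring a little care being the compatibility of the local definitions on overlaps of the open cover, which is handled exactly as in the proof of Proposition~\ref{discwood} (a change of basis, already treated there). One should also note that the reduction-modulo-$2$ subtlety — the possible non-surjectivity of $\chi_{U,\nn}$ — causes no trouble, because $\overline{f}$ is compatible with the maps $\chi_{U,-}$ by naturality, so it carries the image of $\chi_{U,(\cc'/\os)^{\vee}}$ into the image of $\chi_{U,(\cc/\os)^{\vee}}$. Accordingly the proof can simply be stated as: ``This follows by combining the proofs of Propositions~\ref{discwood} and \ref{parinv}, observing that the isomorphism $\overline{\psi}^{\vee}$ induced by an $\os$-algebra isomorphism $\psi$ witnesses both statements at once.''
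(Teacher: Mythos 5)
Your proof is correct and follows essentially the same route as the paper: the paper's proof likewise notes that the algebra isomorphism $\psi$ induces a single $\os$-module isomorphism between $(\cc/\os)^{\vee}$ and $(\cc'/\os)^{\vee}$ carrying the Wood-discriminant and the parity simultaneously, by the local computations of Propositions~\ref{discwood} and \ref{parinv}. Your extra care about the direction of the map and the compatibility with $\chi_{U,-}$ is fine but not a departure from the paper's argument.
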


\begin{proof}
	Let $\psi \colon \cc \overset{\sim}{\longrightarrow} \cc'$ be an isomorphism of quadratic algebras. Following the proofs of Propositions~\ref{discwood} and \ref{parinv}, one can check that $\psi$ induces an isomorphism of $\os$-modules ${(\cc/\os)^{\vee} \overset{\sim}{\longrightarrow} (\cc'/\os)^{\vee}}$ sending $(\Delta(\cc),\Pi(\cc))$ to $(\Delta(\cc'),\Pi(\cc'))$.
\end{proof}

The remaining part of this Subsection is devoted to showing that two quadratic algebras having isomorphic types are themselves isomorphic, when $2$ is not a zero divisor. This will be done in several steps.

\begin{lem}\label{rel}
	Let $(\Delta,\Pi,\nn)$ be a valid triple. Then there exists an open covering $(U_i)_i$ of $S$ such that for all $i$,
	\begin{enumerate}
		\item $\nn_{\vert U_i}$ is free of rank $1$;
		
		\item the section ${\Pi_{\vert U_i} \in \Gamma(U_i,\nn/2\nn)}$ can be lifted to some ${\tilde{\Pi}_i \in \Gamma(U_i,\nn)}$;
		\item for any such $\tilde{\Pi}_i$, we have the congruence relation ${\Delta_{\vert U_i} \equiv \tilde{\Pi}_i^{\otimes 2} \pmod{4\Gamma(U_i,\nn^{\otimes 2})}}$.
	\end{enumerate}
\end{lem}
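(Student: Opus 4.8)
The plan is to reduce everything to a local computation on a suitable open covering, then check that the required congruence is intrinsic (independent of the chosen lift). Since $(\Delta_W,\Pi,\nn)$ is valid, there is a quadratic algebra $\cc$ with type isomorphic to $(\Delta_W,\Pi,\nn)$; fix such a $\cc$ and an isomorphism ${f \colon (\cc/\os)^{\vee} \overset{\sim}{\longrightarrow} \nn}$ carrying $\Delta_W(\cc)$ to $\Delta_W$ and $\Pi(\cc)$ to $\Pi$. Because $\cc$ is locally free of rank $2$, choose an open covering $(U_i)_i$ of $S$ such that each $\ccu[U_i]$ is free; shrinking if necessary we may also assume each $\nn_{\vert U_i}$ is free of rank $1$ (these are the only constraints for (1)). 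This uses Proposition~\ref{freedirectsummand} and the local freeness of $\nn$.

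Next I would treat (2) and (3) together on a single $U_i$. By Proposition~\ref{freedirectsummand} we may write ${\ccu[U_i] \simeq \faktor{\ou[U_i][\tau]}{\left\langle \tau^2+r\tau+s \right\rangle}}$ for some ${r,s \in \Gamma(U_i,\os)}$, with $\ortau$ a generator of $(\cc/\os)_{\vert U_i}$, hence $\ortau^{\vee}$ a generator of $(\cc/\os)^{\vee}_{\vert U_i}$. By definition of the parity (Definition~\ref{defpargen}), $\Pi(\cc)_{\vert U_i}$ is the image under $\chi_{U_i,(\cc/\os)^{\vee}}$ of the class of $r\ortau^{\vee}$; transporting along $f$, the element ${\tilde{\Pi}_i := f(r\ortau^{\vee}) \in \Gamma(U_i,\nn)}$ is a lift of $\Pi_{\vert U_i}$, which proves (2). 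For (3), by Definition~\ref{deltaw} we have $\Delta_W(\cc)_{\vert U_i} = (r^2-4s)(\ortau^{\vee}\otimes\ortau^{\vee})$, so $\Delta_{W\vert U_i} = f^{\otimes 2}\big((r^2-4s)\ortau^{\vee}\otimes\ortau^{\vee}\big)$, and with this particular lift ${\tilde{\Pi}_i^{\otimes 2} = f^{\otimes 2}(r^2\,\ortau^{\vee}\otimes\ortau^{\vee})}$. Hence ${\Delta_{W\vert U_i} - \tilde{\Pi}_i^{\otimes 2} = f^{\otimes 2}(-4s\,\ortau^{\vee}\otimes\ortau^{\vee}) \in 4\Gamma(U_i,\nn^{\otimes 2})}$, establishing the congruence for \emph{this} choice of lift.

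The remaining point — and the place where a little care is genuinely needed — is the phrase ``for any such $\tilde{\Pi}_i$'' in (3): one must see that changing the lift does not break the congruence. If $\tilde{\Pi}_i$ and $\tilde{\Pi}_i'$ are two lifts of $\Pi_{\vert U_i}$ in $\Gamma(U_i,\nn)$, then $\tilde{\Pi}_i' = \tilde{\Pi}_i + 2\mu$ for some ${\mu \in \Gamma(U_i,\nn)}$ — here I use that over the affine-or-locally-free piece $U_i$ the map $\chi_{U_i,\nn}$ is injective (as noted before Definition~\ref{defpargen}), so two lifts differ by an element of $2\Gamma(U_i,\nn)$. Then ${\tilde{\Pi}_i'^{\otimes 2} = \tilde{\Pi}_i^{\otimes 2} + 4(\tilde{\Pi}_i\otimes\mu + \mu\otimes\tilde{\Pi}_i)/2 \cdot 2 + 4\mu^{\otimes 2}}$; more cleanly, $\tilde{\Pi}_i'^{\otimes 2} - \tilde{\Pi}_i^{\otimes 2} = 2\tilde{\Pi}_i\otimes(2\mu) + (2\mu)^{\otimes 2} \in 4\Gamma(U_i,\nn^{\otimes 2})$, since $2\tilde{\Pi}_i\otimes 2\mu = 4(\tilde{\Pi}_i\otimes\mu)$. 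Thus the congruence is independent of the chosen lift, and (3) holds as stated. I do not expect any serious obstacle here; the only subtlety is bookkeeping the distinction between $\faktor{\Gamma(U,\nn)}{2\Gamma(U,\nn)}$ and $\Gamma(U,\nn/2\nn)$, which is exactly why the covering is chosen so that each $U_i$ behaves well (e.g. is affine, or small enough that $\chi_{U_i,\nn}$ is an isomorphism).
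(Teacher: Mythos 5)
Your argument is correct and follows essentially the same route as the paper: use validity to pass to a quadratic algebra, restrict to a covering where it is free, write it as $\faktor{\ou[\tau]}{\left\langle \tau^2+r\tau+s\right\rangle}$ via Proposition~\ref{freedirectsummand}, take $r\ortau^{\vee}$ (transported by the type isomorphism) as the lift of the parity, and read the congruence off $\Delta_W=(r^2-4s)\,\ortau^{\vee}\otimes\ortau^{\vee}$. The only difference is that you explicitly verify independence of the chosen lift (two lifts differ by $2\mu$, and $\tilde{\Pi}_i^{\otimes 2}$ changes by an element of $4\Gamma(U_i,\nn^{\otimes 2})$), a point the paper's proof leaves implicit; this is a harmless and welcome addition.
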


\begin{proof}
	Since the triple $(\Delta,\Pi,\nn)$ is valid, it comes from a quadratic algebra $\cc$. First, assume that $\cc$ is free. Then ${\cc = \faktor{\os[\tau]}{\left\langle \tau^2 + r\tau + s \right\rangle}}$ for some ${r,s \in \Gamma(S,\os)}$ and ${\tau \in \Gamma(S,\cc)}$, as seen in Proposition~\ref{freedirectsummand}. In particular, ${\cc/\os = \ortau\os \simeq \nn^{\vee}}$, hence $\nn$ is free. Moreover, $\Pi$ coincides with the class of $r\ortau^{\vee}$ modulo $2\Gamma(S,(\cc/\os)^{\vee})$, by definition of the parity of $\cc$. Hence, ${\tilde{\Pi} := r\ortau^{\vee}}$ is a lift of $\Pi$ to $\Gamma(S,(\cc/\os)^{\vee})$.
	
	The definition of the discriminant ${\Delta(\cc) = \Delta}$ enables us to conclude that $$\Delta = (r^2-4s)\ortau^{\vee}\otimes\ortau^{\vee} \equiv r^2\ortau^{\vee}\otimes\ortau^{\vee} = \tilde{\Pi}^{\otimes 2} \pmod{4\Gamma(S,(\cc/\os)^{\vee\otimes 2})}.$$
	Thus, when $\cc$ is free, we proved the result with $S$ being the sought open covering of itself. In the general case, there exists an open covering $(U_i)_i$ of $S$ such that $\cc_{\vert U_i}$ is free of rank $2$ for all $i$. The previous arguments show that this open covering satisfies the desired conditions.
\end{proof}

\begin{rem}
	The fact that $\nn$ is free does not necessarily imply that $\cc$ is free: as seen in Remark~\ref{remcohom}, a potential obstruction to this lies in $H^1(S,\os)$. In particular, the open covering $(U_i)_i$ of Lemma~\ref{rel} may be finer than the one expected to satisfy the weaker condition that $\nn_{\vert U_i}$ is free.
\end{rem}

Let us focus on the free case.

\begin{lem} \label{freeok}
	Let $S$ be a scheme such that $2$ is not a zero divisor and let ${\cc = \faktor{\os[\tau]}{\left\langle\tau^2+r\tau+s\right\rangle}}$ be a free quadratic algebra over $S$. Let ${d := r^2-4s}$ and ${\tilde{p} \in \Gamma(S,\os)}$ be such that ${\tilde{p} \equiv r \pmod{2\Gamma(S,\os)}}$.
	
	Then $\cc$ is isomorphic to ${\Omega := \faktor{\os[\omega]}{\left\langle \omega^2+\tilde{p}\omega - \frac{d - \tilde{p}^2}{4}\right\rangle}}$ through the isomorphism of sheaves of rings
	$$\psi \colon \left\{
	\begin{aligned}
		\cc & \longrightarrow \Omega \\
		\tau & \longmapsto \omega + \frac{\tilde{p} - r}{2}
	\end{aligned}\right..$$
\end{lem}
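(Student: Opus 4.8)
The plan is to verify directly that the proposed map $\psi$ is a well-defined isomorphism of sheaves of rings. Since both $\cc$ and $\Omega$ are free quadratic algebras with explicit presentations, everything can be checked globally on sections over $S$, and the key point is purely algebraic: one must confirm that the image of $\tau$ satisfies the defining relation of $\cc$ inside $\Omega$, so that $\psi$ is well-defined as a ring homomorphism, and then exhibit an inverse.

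First I would set $\alpha := \frac{\tilde{p}-r}{2}$, which makes sense as an element of $\Gamma(S,\os)$ precisely because $\tilde{p} \equiv r \pmod{2\Gamma(S,\os)}$ and $2$ is not a zero divisor (so the division by $2$ is unambiguous). Then $\psi$ is the $\os$-algebra map determined by $\tau \mapsto \omega + \alpha$, and to see it is well-defined I would check that $(\omega+\alpha)^2 + r(\omega+\alpha) + s = 0$ in $\Omega$. Expanding using $\omega^2 = -\tilde{p}\omega + \frac{d-\tilde{p}^2}{4}$ and substituting $r = \tilde{p} - 2\alpha$, $s = \frac{r^2-d}{4}$ gives, after collecting the coefficient of $\omega$ and the constant term, exactly $0$; this is the same computation as Equation~(\ref{eq}) read with $\varepsilon = 1$ and the roles of the two algebras interchanged, so it is routine. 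Concretely, the $\omega$-coefficient is $2\alpha - \tilde{p} + r = 0$ and the constant term is $\alpha^2 + \frac{d-\tilde{p}^2}{4} + r\alpha + s = 0$ once one substitutes $r = \tilde{p}-2\alpha$ and $s = \frac{r^2-d}{4}$.

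Next I would observe that $\psi$ is an isomorphism of $\os$-modules: it sends the basis $(1,\tau)$ to $(1,\omega+\alpha)$, which is again an $\os$-basis of $\Omega$ since the change-of-basis matrix $\left(\begin{smallmatrix} 1 & \alpha \\ 0 & 1 \end{smallmatrix}\right)$ is invertible over $\Gamma(S,\os)$. A bijective morphism of sheaves of rings is an isomorphism, so $\psi$ is an isomorphism of sheaves of rings; alternatively one writes down the inverse explicitly as $\omega \mapsto \tau - \alpha$ and checks it respects the relation of $\Omega$ by the symmetric computation. Finally I would note that $\Omega$ is indeed a free quadratic algebra of the form in Example~\ref{exfree}, since its defining equation $\omega^2 + \tilde{p}\,\omega - \frac{d-\tilde{p}^2}{4}$ has coefficients in $\Gamma(S,\os)$ — again using that $d - \tilde{p}^2 \equiv r^2 - r^2 = 0 \pmod{4\Gamma(S,\os)}$, which holds because $\tilde{p} \equiv r \pmod 2$ and $2$ is not a zero divisor, so the quotient by $4$ is legitimate.

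The only genuine subtlety — hardly an obstacle — is the repeated use of the hypothesis that $2$ is not a zero divisor: it is what guarantees that $\frac{\tilde{p}-r}{2}$ and $\frac{d-\tilde{p}^2}{4}$ are well-defined elements of $\Gamma(S,\os)$ rather than merely elements that exist after inverting $2$. Everything else is a bookkeeping exercise with the change-of-variable formula~(\ref{eq}). I expect no real difficulty here; the lemma is essentially a normalization statement, putting an arbitrary free quadratic algebra into the canonical form $\Omega$ whose middle coefficient is a chosen representative $\tilde{p}$ of the parity and whose constant term is then forced by the discriminant.
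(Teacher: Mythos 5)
Your proposal is correct and follows essentially the same route as the paper: verify by the same direct computation that the image of $\tau$ satisfies its defining relation (equivalently, that $\psi(\tau^2)=\psi(\tau)^2$), using that $2$ is not a zero divisor to make sense of $\frac{\tilde{p}-r}{2}$ and $\frac{d-\tilde{p}^2}{4}$, and then exhibit the explicit inverse $\omega \mapsto \tau - \frac{\tilde{p}-r}{2}$. Your extra remark that $d-\tilde{p}^2 \equiv 0 \pmod{4\Gamma(S,\os)}$, so that $\Omega$ is well-defined, is a small point the paper leaves implicit.
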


\begin{proof}
	The fact that $2$ is not a zero divisor ensures us that if $2$ divides some ${x \in \Gamma(S,\os)}$, then there is a unique ${\alpha \in \Gamma(S,\os)}$ such that ${2\alpha = x}$. Thus, the quantity $\frac{d-\tilde{p}^2}{4}$ makes sense in $\Gamma(S,\os)$, and we denote it by $\alpha$. Likewise, we write $\beta := \frac{\tilde{p}-r}{2}$.
	
	We check that $\psi$ is a homomorphism of rings, that is, that ${\psi(\tau^2) = \psi(\tau)^2}$. We have
	$$\psi(\tau^2) = -r\psi(\tau) -s = -r\omega - s - r\beta,$$
	whereas
	$$\psi(\tau)^2 = -\tilde{p}\omega + \alpha +2\beta\omega + \beta^2 = (-\tilde{p}+2\beta)\omega + (\alpha + \beta^2).$$
	Furthermore, ${2\beta = \tilde{p}-r}$, hence ${r = \tilde{p}-2\beta}$ and the $\omega$-terms in the above two equations are equal. For the constant terms, we compute their difference:
	\begin{align*}
		\alpha + \beta^2 + r\beta + s & = \frac{d-\tilde{p}^2}{4} + \frac{(\tilde{p}-r)^2}{4} + r\frac{\tilde{p}-r}{2} + s \\
		& = \frac{d-2\tilde{p}r + r^2 + 2\tilde{p}r - 2r^2+4s}{4} \\
		& = \frac{d-r^2+4s}{4} = 0.
	\end{align*}
	Therefore, ${\psi(\tau^2) = \psi(\tau)^2}$, which proves that $\psi$ is a homomorphism of sheaves of rings. Clearly, $\psi$ is invertible, with inverse map ${\omega \mapsto \tau - \frac{\tilde{p}-r}{2}}$, hence $\psi$ is an isomorphism.
\end{proof}

\begin{rem}
	The isomorphism $\psi$ from Lemma~\ref{freeok} is a generalization of the following observation. Over $\R$, consider two polynomial equations ${x^2+rx+s = 0}$ and ${y^2+r'y+s' = 0}$ with the same (usual) discriminant ${\Delta \geq 0}$, and denote by ${x_{\pm} := \frac{-r \pm \sqrt{\Delta}}{2}}$ and ${y_{\pm} := \frac{-r' \pm \sqrt{\Delta}}{2}}$ their respective roots. Then, we have ${x_+ = y_+ + \frac{r'-r}{2}}$.
\end{rem}

The next Proposition characterizes valid types $(\Delta,\Pi,\nn)$, and if a valid type is given, it constructs a representative of the isomorphism class of quadratic algebras of that type. By the way, it also gives a converse to Lemma~\ref{rel}.

\begin{prop} \label{constr}
	Let $S$ be a scheme such that $2$ is not a zero divisor. Let $\nn$ be a locally free $\os$-module of rank $1$, let $\Delta$ be a global section of $\nn^{\otimes 2}$ and let $\Pi$ be a global section of $\nn/2\nn$. Then
	\begin{enumerate}
		\item the triple $(\Delta,\Pi,\nn)$ is valid if and only if there exists an open covering $(U_i)_i$ such that for all $i$:
		\begin{enumerate}
			\item $\nn_{\vert U_i} = \nu_i\mathcal{O}_{U_i}$ is free; \label{cond1}
			
			\item the section $\Pi_{\vert U_i} \in \Gamma(U_i,\nn/2\nn)$ can be lifted to some $\tilde{\Pi}_i \in \Gamma(U_i,\nn)$; \label{cond2}
			\item for any such lift $\tilde{\Pi}_i$, we have $\Delta_{\vert U_i} \equiv \tilde{\Pi}_i^{\otimes 2} \pmod{4\Gamma(U_i,\nn^{\otimes 2})}$. \label{cond3}
		\end{enumerate}
		\item If $(\Delta,\Pi,\nn)$ satisfies conditions \eqref{cond1}, \eqref{cond2} and \eqref{cond3}, then one obtains a quadratic algebra $\Omega$ over $S$ of type $(\Delta,\Pi,\nn)$ by glueing of the $\Omega_i$'s, where ${\Omega_i := \mathcal{O}_{U_i} \oplus \nu_i^{\vee}\mathcal{O}_{U_i}}$ is a free $\mathcal{O}_{U_i}$-module whose algebra structure is defined by ${\omega_i^2+\tilde{p}_i\omega_i-\frac{d_i-\tilde{p}_i^2}{4} = 0}$ with $1 := (1,0)$,  $\omega_i := (0,\nu_i^{\vee})$, and ${\tilde{p}_i,d_i \in \Gamma(U_i,\os)}$ are given by ${\tilde{\Pi}_i = \tilde{p}_i\nu_i}$, ${\Delta_{\vert U_i} = d_i \nu_i \otimes \nu_i}$.
	\end{enumerate}
\end{prop}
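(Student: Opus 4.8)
The plan is to prove both directions of the equivalence and then verify that the glued object $\Omega$ has the prescribed type. The ``only if'' direction of (1) is already Lemma~\ref{rel}, so the work lies in the ``if'' direction, which will be handled simultaneously with part~(2): starting from a triple satisfying \eqref{cond1}, \eqref{cond2}, \eqref{cond3}, I will build the algebra $\Omega$ and show it is a quadratic algebra of the right type, which in particular proves the triple is valid.

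First I would fix an open covering $(U_i)_i$ as in the statement and, on each $U_i$, choose a generator $\nu_i$ of $\nn_{\vert U_i}$ and a lift $\tilde{\Pi}_i \in \Gamma(U_i,\nn)$ of $\Pi_{\vert U_i}$; writing $\tilde{\Pi}_i = \tilde p_i \nu_i$ and $\Delta_{W\vert U_i} = d_i\,\nu_i\otimes\nu_i$, condition~\eqref{cond3} says precisely that $d_i \equiv \tilde p_i^{\,2} \pmod{4\Gamma(U_i,\os)}$, so (using that $2$ is not a zero divisor) the element $\frac{d_i - \tilde p_i^{\,2}}{4} \in \Gamma(U_i,\os)$ is well-defined and $\Omega_i := \faktor{\mathcal{O}_{U_i}[\omega_i]}{\langle \omega_i^2 + \tilde p_i\omega_i - \frac{d_i - \tilde p_i^{\,2}}{4}\rangle}$ is a free quadratic algebra over $U_i$ with $\Omega_i/\mathcal{O}_{U_i} \simeq \overline{\omega_i}\,\mathcal{O}_{U_i}$; under the pairing $\overline{\omega_i} \leftrightarrow \nu_i^{\vee}$ this matches the description $\Omega_i = \mathcal{O}_{U_i}\oplus\nu_i^{\vee}\mathcal{O}_{U_i}$ in the statement. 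A direct computation with Definitions~\ref{deltaw} and \ref{defpargen} gives $\Delta_W(\Omega_i) = d_i\,\nu_i\otimes\nu_i = \Delta_{W\vert U_i}$ and $\Pi(\Omega_i) = \tilde p_i\,\nu_i \bmod 2 = \Pi_{\vert U_i}$.

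The heart of the argument is the gluing step: I must produce isomorphisms $\psi_{ij} \colon \Omega_{j\vert U_i\cap U_j} \overset{\sim}{\to} \Omega_{i\vert U_i\cap U_j}$ satisfying the cocycle condition. On $U_{ij} := U_i\cap U_j$ there is a unit $\varepsilon_{ij} \in \Gamma(U_{ij},\os)^{\times}$ with $\nu_j = \varepsilon_{ij}\nu_i$, so the two lifts $\tilde p_i$ and $\varepsilon_{ij}\tilde p_j$ of the same parity section $\Pi_{\vert U_{ij}}$ (now both expressed in the generator $\nu_i$) differ by an element of $2\Gamma(U_{ij},\os)$; Lemma~\ref{freeok}, applied with $r = \varepsilon_{ij}\tilde p_j$ (after the rescaling $\omega_j \mapsto \varepsilon_{ij}^{-1}\omega_j$ which replaces the defining equation via \eqref{eq}) and $\tilde p = \tilde p_i$, furnishes a canonical isomorphism $\psi_{ij}$ sending $\omega_j$ to $\varepsilon_{ij}(\omega_i + \frac{\tilde p_i - \varepsilon_{ij}\tilde p_j}{2})$. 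Because $\psi_{ij}$ is determined by the formula in Lemma~\ref{freeok} and the transition data $\varepsilon_{ij}$ satisfy $\varepsilon_{ij}\varepsilon_{jk} = \varepsilon_{ik}$, checking $\psi_{ij}\circ\psi_{jk} = \psi_{ik}$ on $U_i\cap U_j\cap U_k$ reduces to a short identity among the half-differences of lifts, valid since $2$ is not a zero divisor. This yields a well-defined sheaf of $\os$-algebras $\Omega$, locally free of rank $2$, hence a quadratic algebra. I expect this cocycle verification to be the main obstacle — not because it is deep, but because one must be careful that the auxiliary choices ($\nu_i$, $\tilde\Pi_i$) enter the transition maps only through quantities ($\varepsilon_{ij}$ and $2$-divisible differences) for which the cocycle relation is forced.

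Finally, since $\Delta_W$ and $\Pi$ are defined locally (Propositions~\ref{discwood}, \ref{parinv}) and agree with $\Delta_W(\Omega_i)$, $\Pi(\Omega_i)$ on each $U_i$, and since the local identifications $\Omega_i/\mathcal{O}_{U_i} \simeq \nu_i^{\vee}\mathcal{O}_{U_i}$ glue (via the $\overline{\psi_{ij}}$, which act on the quotients exactly by the scalars $\varepsilon_{ij}$, matching the transition maps of $\nn^{\vee}$) to an isomorphism $\Omega/\os \simeq \nn^{\vee}$, the Wood-discriminant and parity of $\Omega$ glue to $(\Delta_W, (\cc/\os)^{\vee})$ and $(\Pi, (\cc/\os)^{\vee})$ with $(\Omega/\os)^{\vee} \simeq \nn$. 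Thus $\Omega$ has type $(\Delta_W,\Pi,\nn)$, which simultaneously establishes part~(2) and the ``if'' direction of part~(1).
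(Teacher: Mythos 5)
Your plan is the paper's: the ``only if'' direction is Lemma~\ref{rel}; for the converse you build $\Omega_i$ on each $U_i$ (well defined because \eqref{cond3} gives $d_i\equiv\tilde{p}_i^2\pmod{4\Gamma(U_i,\os)}$ and $2$ is not a zero divisor), glue by isomorphisms derived from Lemma~\ref{freeok}, check the cocycle condition using $\varepsilon_{ij}\varepsilon_{jk}=\varepsilon_{ik}$, and read off the type locally. The local computations $\Delta_W(\Omega_i)=d_i\nu_i\otimes\nu_i$, $\Pi(\Omega_i)\equiv\tilde{p}_i\nu_i$ and the final glueing of the identifications $\Omega_i/\mathcal{O}_{U_i}\simeq\nu_i^{\vee}\mathcal{O}_{U_i}$ to an isomorphism $\Omega/\os\simeq\nn^{\vee}$ are exactly as in the paper.

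One concrete correction is needed in the transition maps, which you rightly single out as the crux. With $\nu_j=\varepsilon_{ij}\nu_i$ one has $d_i=\varepsilon_{ij}^2 d_j$ and $\tilde{p}_i\equiv\varepsilon_{ij}\tilde{p}_j\pmod{2}$, and a morphism $\Omega_{j\vert U_i\cap U_j}\to\Omega_{i\vert U_i\cap U_j}$ sending $\omega_j\mapsto u\omega_i+v$ is a ring homomorphism only if $v=\frac{u\tilde{p}_i-\tilde{p}_j}{2}$ and $u^2d_i=d_j$, which forces $u=\pm\varepsilon_{ij}^{-1}$. Your displayed formula $\omega_j\mapsto\varepsilon_{ij}\bigl(\omega_i+\frac{\tilde{p}_i-\varepsilon_{ij}\tilde{p}_j}{2}\bigr)$ takes $u=\varepsilon_{ij}$, so it is not a homomorphism in general, and its reduction modulo $\os$ sends $\overline{\omega_j}$ to $\varepsilon_{ij}\overline{\omega_i}$, whereas matching the transitions of $\nn^{\vee}$ requires $\overline{\omega_j}=\nu_j^{\vee}\mapsto\varepsilon_{ij}^{-1}\nu_i^{\vee}$. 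The slip is only in transcription: carrying out the rescaling via \eqref{eq} and applying Lemma~\ref{freeok} with $r=\varepsilon_{ij}\tilde{p}_j$ and $\tilde{p}=\tilde{p}_i$, as you yourself describe, yields $\omega_j\mapsto\varepsilon_{ij}^{-1}\bigl(\omega_i+\frac{\tilde{p}_i-\varepsilon_{ij}\tilde{p}_j}{2}\bigr)$, which is precisely the inverse of the paper's $\psi_{i,j}$ (the paper's prefactor $\varepsilon_{i,j}$ belongs to the map in the opposite direction $\Omega_i\to\Omega_j$). With this fix the cocycle identity is the short computation you indicate and the rest of your argument goes through as in the paper.
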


\begin{proof}
	The implication ($(\Delta,\Pi,\nn)$ valid) $\Rightarrow$ (\eqref{cond1}, \eqref{cond2} and \eqref{cond3}) of the first assertion comes from Lemma~\ref{rel}, whereas the converse one follows from the second assertion about $\Omega$. Therefore, we focus on the latter.
	
	The $\Omega_i$'s are well-defined since $2$ is not a zero divisor, and because condition (\ref{cond3}) ensures that ${d_i^2 \equiv \tilde{p}_i^2 \pmod{4\Gamma(U_i,\os)}}$. In order to glue the $\Omega_i$'s together, we need to define isomorphisms ${\psi_{i,j} \colon \Omega_{i \vert U_i \cap U_j} \overset{\sim}{\longrightarrow} \Omega_{j \vert U_i \cap U_j}}$ for all $i,j$. To lighten the notation, if ${x_i \in \Gamma(U_i,\Omega_i)}$, we still denote by $x_i$ its restriction to $\Omega_{i \vert U_i \cap U_j}$. The free algebra $\Omega_{i \vert U_i \cap U_j}$ is generated by the $\mathcal{O}_{U_i \cap U_j}$-basis $(1,\omega_i)$ with ${\omega_i^2 + \tilde{p}_i \omega_i - \frac{d_i - \tilde{p}_i^2}{4} = 0}$, and $\Omega_{j \vert U_i \cap U_j}$ by $(1,\omega_j)$ with ${\omega_j^2 + \tilde{p}_j \omega_j - \frac{d_j - \tilde{p}_j^2}{4} = 0}$. On $U_i \cap U_j$, the free module $\nn_{\vert U_i \cap U_j}$ is both generated by $\nu_i$ and $\nu_j$, hence there exists $\varepsilon_{i,j} \in \Gamma(U_i \cap U_j, \os)^{\times}$ such that ${\nu_j = \varepsilon_{i,j}\nu_i}$. Since ${\overline{\omega_j} = \nu_j^{\vee}}$, we infer that ${\overline{\omega_j} = \varepsilon_{i,j}^{-1}\overline{\omega_i}}$. Likewise, ${\tilde{p}_j\nu_j = \tilde{p}_j\varepsilon_{i,j}\nu_i}$ and ${d_j\nu_j\otimes\nu_j = d_j\varepsilon_{i,j}^2\nu_i\otimes\nu_i}$, leading to ${\tilde{p}_i \equiv \tilde{p}_j\varepsilon_{i,j} \pmod{2\Gamma(U_i \cap U_j,\os)}}$ and ${d_i = d_j\varepsilon_{i,j}^2}$. Inspired by the $\psi$ from Lemma~\ref{freeok}, we set
	
	$$\psi_{i,j} \colon \left\{
	\begin{aligned}
		\Omega_{i \vert U_i \cap U_j} & \longrightarrow \Omega_{j \vert U_i \cap U_j} \\
		\omega_i & \longmapsto \varepsilon_{i,j}\left(\omega_j + \frac{\tilde{p}_j - \tilde{p}_i\varepsilon_{i,j}^{-1}}{2}\right)
	\end{aligned}\right..$$

	Following the proof of Lemma~\ref{freeok}, one can check that $\psi_{i,j}$ is an isomorphism of quadratic algebras.
	
	It remains to check the glueing condition for the $\psi_{i,j}$. Given three indices $i,j,k$, we must check that the following diagram
	\begin{center}
		\begin{tikzcd}[column sep=scriptsize]
			\Omega_{i \vert U_i \cap U_j \cap U_k} \arrow[rr,"\psi_{i,k}"] \arrow[rd,"\psi_{i,j}"'] & & \Omega_{k \vert U_i \cap U_j \cap U_k} \\ & \Omega_{j \vert U_i \cap U_j \cap U_k} \arrow[ru,"\psi_{j,k}"'] &
		\end{tikzcd}
	\end{center}
	commutes. We have $$\psi_{i,k}(\omega_i) = \varepsilon_{i,k}\left(\omega_k + \frac{\tilde{p}_k-\tilde{p}_i\varepsilon_{i,k}^{-1}}{2}\right),$$
	and it is straightforward to compute that
	$$\psi_{j,k} \circ \psi_{i,j}(\omega_i) = \varepsilon_{i,j}\varepsilon_{j,k}\left(\omega_k + \frac{\tilde{p}_k-\tilde{p}_i\varepsilon_{i,j}^{-1}\varepsilon_{j,k}^{-1}}{2}\right).$$
	
	By definition of the $\varepsilon$'s, on $U_i \cap U_j \cap U_k$ we have ${\nu_k = \varepsilon_{j,k} \nu_j = \varepsilon_{j,k}\varepsilon_{i,j}\nu_i}$, but at the same time ${\nu_k = \varepsilon_{i,k}\nu_i}$, hence ${\varepsilon_{i,k} = \varepsilon_{i,j}\varepsilon_{j,k}}$. Therefore, ${\psi_{i,k} = \psi_{j,k}\circ\psi_{i,j}}$ and the above diagram indeed commutes. By virtue of \cite[(3.3.1)]{EGA1}, we can glue all the $\Omega_i$'s together to obtain a sheaf of $\os$-algebras $\Omega$. By construction, $\Omega$ is a quadratic algebra over $S$ which has discriminant $\Delta$ and parity $\Pi$.
\end{proof}

\begin{rem}
	Actually, if we consider any open affine covering of $S$, then condition (\ref{cond2}) in Proposition~\ref{constr} is always satisfied, by the vanishing of sheaf cohomology in the affine case.
\end{rem}

We now turn to Theorem~\ref{goalclass}, which we state in detail.

\begin{theo} \label{uniqueness}
	Let $S$ be a scheme such that $2$ is not a zero divisor. Then we have the following bijection:
	\begin{align*}
		\left\{
		\begin{array}{c}
		isomorphism~classes \\
		of~valid~triples
		\end{array}
		\right\} & \overset{1 : 1}{\longleftrightarrow} \left\{
		\begin{array}{c}
		isomorphism~classes \\
		of~quadratic~algebras
		\end{array}
		\right\} \\
		[(\Delta,\Pi,\nn)] ~ & \longmapsto ~ \left[\Omega(\Delta,\Pi,\nn)\right] \\
		[(\Delta(\cc),\Pi(\cc),(\cc/\os)^{\vee})] ~ & \longmapsfrom ~ [\cc]
	\end{align*}
	where $\Omega(\Delta,\Pi,\nn)$ is the quadratic algebra constructed in Proposition~\ref{constr}.
\end{theo}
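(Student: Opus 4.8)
The plan is to show the two maps are mutually inverse bijections. The heart of the matter is injectivity of the second map, i.e.\ the statement that two quadratic algebras with isomorphic types are isomorphic; the rest is essentially bookkeeping using results already established. First I would verify that the map $[(\Delta_W,\Pi,\nn)] \mapsto [\Omega(\Delta_W,\Pi,\nn)]$ is well-defined on isomorphism classes: if $(\Delta_W,\Pi,\nn) \cong (\Delta_W',\Pi',\nn')$ via $f \colon \nn \xrightarrow{\sim} \nn'$, then transporting a trivializing open cover and the local data $\tilde p_i, d_i$ along $f$ turns the Čech construction of $\Omega(\Delta_W,\Pi,\nn)$ into that of $\Omega(\Delta_W',\Pi',\nn')$, giving an isomorphism of algebras. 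That the map is well-defined on the source (valid triples only) is immediate from Proposition~\ref{constr}. In the reverse direction, Proposition~\ref{typeinv} is exactly the statement that $[\cc] \mapsto [(\Delta_W(\cc),\Pi(\cc),(\cc/\os)^{\vee})]$ is well-defined, and it lands in valid triples by definition of validity.

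Next I would check that the composite starting from a valid triple is the identity: by the second assertion of Proposition~\ref{constr}, $\Omega(\Delta_W,\Pi,\nn)$ has type exactly $(\Delta_W,\Pi,\nn)$ (not merely an isomorphic type), so the round trip returns $[(\Delta_W,\Pi,\nn)]$ on the nose. The genuinely substantive direction is the other composite: given a quadratic algebra $\cc$, I must produce an isomorphism $\cc \xrightarrow{\sim} \Omega\big(\Delta_W(\cc),\Pi(\cc),(\cc/\os)^{\vee}\big)$. Pick an open cover $(U_i)$ on which $\cc$ is free; write $\ccu[U_i] \simeq \ou[U_i][\tau_i]/\langle \tau_i^2 + r_i\tau_i + s_i\rangle$, and let $d_i = r_i^2 - 4s_i$. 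Because $2$ is not a zero divisor, Lemma~\ref{freeok} gives, for each $i$, an isomorphism $\psi_i \colon \ccu[U_i] \xrightarrow{\sim} \Omega_i$ (with the role of $\tilde p_i$ played by a lift of the locally defined parity). The main obstacle is the gluing: one must verify that the $\psi_i$ are compatible on overlaps with the transition maps $\psi_{i,j}$ defining $\Omega$, i.e.\ that $\psi_j = \psi_{i,j}\circ\psi_i$ on $U_i\cap U_j$. This reduces to a computation with the change-of-basis Equation~\eqref{eq}, the relation $\nu_j = \varepsilon_{i,j}\nu_i$, and the uniqueness of halves afforded by the non-zero-divisor hypothesis; it is the same kind of manipulation as in the proof of Proposition~\ref{constr}, and the key point making it work is precisely that over $U_i$ the two square roots of a polynomial equation differ by an element that is divisible by $2$ in a unique way, so that both $\psi_i$ and $\psi_{i,j}$ are governed by the same formula. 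Once the $\psi_i$ glue, they assemble to a global isomorphism $\cc \xrightarrow{\sim} \Omega(\Delta_W(\cc),\Pi(\cc),(\cc/\os)^{\vee})$, which closes the loop.

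Finally I would assemble the pieces: both composites are the identity on isomorphism classes, hence each map is a bijection and they are inverse to one another. I expect the gluing compatibility in the previous paragraph to be the only place requiring real care; everything else is a direct appeal to Propositions~\ref{typeinv} and \ref{constr} and Lemma~\ref{freeok}. One subtlety worth flagging explicitly is that the cover on which $\cc$ is free may be strictly finer than a cover trivializing $\nn$ (as noted in the remark after Lemma~\ref{rel}), so the construction of $\Omega$ should be carried out on a common refinement; since the Čech construction of $\Omega$ is independent of the chosen trivializing cover up to canonical isomorphism, this causes no difficulty.
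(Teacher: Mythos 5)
Your plan is correct and follows essentially the same route as the paper: well-definedness via Proposition~\ref{typeinv}, surjectivity by the definition of valid triples, and the substantive step of gluing local isomorphisms built from Lemma~\ref{freeok} against the transition maps of Proposition~\ref{constr}, with the non-zero-divisor hypothesis providing unique division by $2$. The paper simply packages your ``independence of the cover'' and unit-adjustment issues by fixing an isomorphism $\theta\colon \cc/\os \xrightarrow{\sim} \nn^{\vee}$ and inserting the resulting units $\varepsilon_i$ directly into the local maps $\Psi_i$, which is the same computation you describe.
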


\begin{proof}
	We already saw that two isomorphic quadratic algebras $\cc$ and $\cc'$ have isomorphic types (Proposition~\ref{typeinv}). By definition of valid triples, the map sending quadratic algebras to their types is surjective. For injectivity, we shall show that any quadratic algebra of type isomorphic to $(\Delta,\Pi,\nn)$ is isomorphic to $\Omega(\Delta,\Pi,\nn)$.

	Let $(\Delta,\Pi,\nn)$ be a valid triple. Let $(U_i)_i$ be an open covering satisfying conditions \eqref{cond1}, \eqref{cond2} and \eqref{cond3} from Proposition~\ref{constr}, and let ${\Omega := \Omega(\Delta,\Pi,\nn)}$ be the quadratic algebra defined in this same Proposition. Then for all $i$, the restriction of $\Omega$ to $U_i$ is given by ${\Omega_i := \faktor{\mathcal{O}_{U_i}[\omega_i]}{\left\langle \omega_i^2+\tilde{p}_i\omega_i - \frac{d_i - \tilde{p}_i^2}{4}\right\rangle}}$, where ${\Delta_{\vert U_i} = d_i\overline{\omega_i}^{\vee}\otimes \overline{\omega_i}^{\vee}}$ and ${\Pi \equiv \tilde{p}_i\overline{\omega_i}^{\vee} \pmod{2\Gamma(U_i,\nn)}}$, with $\overline{\omega_i}$ the image of $\omega_i$ in ${\Omega_i/\mathcal{O}_{U_i}}$.
	
	Let $\cc$ be a quadratic algebra of type isomorphic to $(\Delta,\Pi,\nn)$. Note that if $(V_j)_j$ is an open covering of $S$ such that $\cc_{\vert V_j}$ is free for all $j$, then ${W_{i,j} := U_i \cap V_j}$ is an open subset such that $\cc_{\vert W_{i,j}}$ and $\nn_{\vert W_{i,j}}$ are free. In particular, the open covering $(W_{i,j})_{i,j}$ satisfies conditions \eqref{cond1}, \eqref{cond2} and \eqref{cond3} as well as $(U_i)_i$. Therefore, up to refining the covering $(U_i)_i$, we assume without loss of generality that $\cc_{\vert U_i}$ is free for all $i$.
	
	Let $(1,\tau_i)$ be an $\mathcal{O}_{U_i}$-basis of $\cc_{\vert U_i}$, where ${\tau_i^2 + r_i\tau_i + s_i = 0}$. Fix an isomorphism ${\theta \colon \cc/\os \overset{\sim}{\longrightarrow} \nn^{\vee}}$ sending the type of $\cc$ to $(\Delta,\Pi,\nn)$. Since both $\theta(\overline{\tau_i})$ and $\overline{\omega_i}$ are generators of $\nn_{\vert U_i}^{\vee}$, there must exist a unit $\varepsilon_i \in \Gamma(U_i,\os)^{\times}$ such that $\theta(\overline{\tau_i}) = \varepsilon_i\overline{\omega_i}$. Inspired by the previous isomorphisms of rings we defined in Lemma~\ref{freeok} and Proposition~\ref{constr}, let $\Psi_i$ be defined as follows:
	$$\Psi_i \colon \left\{
	\begin{aligned}
		\cc_{\vert U_i} & \longrightarrow \Omega_i \\
		\tau_i & \longmapsto \varepsilon_i\left(\omega_i + \frac{\tilde{p}_i - r_i\varepsilon_i^{-1}}{2}\right)
	\end{aligned}\right..$$
	Proceeding as in the proof of Lemma~\ref{freeok}, the verification that the map $\Psi_i$ is an isomorphism of quadratic algebras over $U_i$ ends with the equation ${d_i\varepsilon_i^2 - (r_i^2-4s_i) = 0}$, which is true since ${d_i = (r_i^2-4s_i)\overline{\omega_i}^{\vee}(\theta(\overline{\tau_i}))^{-2}}$.
	
	Now that the setting is clear, we prove that $\cc$ is globally isomorphic to $\Omega$ by glueing together the $\Psi_i$'s. Following \cite[(3.3.2)]{EGA1}, it is sufficient to check that for all $i,j$, the following diagram
	\begin{center}
		\begin{tikzcd}[row sep=scriptsize]
			& \Omega_{i \vert U_i \cap U_j} \arrow[dd,"\psi_{i,j}"] \\
			\cc_{\vert U_i \cap U_j} \arrow[ru,"\Psi_i"] \arrow[rd,"\Psi_j"'] & \\
			& \Omega_{j \vert U_i \cap U_j}
		\end{tikzcd}
	\end{center}
	commutes, where $\psi_{i,j}$ is the isomorphism constructed in the proof of Proposition~\ref{constr}, given by ${\psi_{i,j}(\omega_i) = \varepsilon_{i,j}\left(\omega_j + \frac{\tilde{p}_j - \tilde{p}_i\varepsilon_{i,j}^{-1}}{2}\right)}$ where $\varepsilon_{i,j}$ is a unit defined by ${\overline{\omega_i} = \varepsilon_{i,j}\overline{\omega_j}}$. We have
	$$\psi_{i,j} \circ \Psi_i(\tau_i) = \psi_{i,j}\left(\varepsilon_i\left(\omega_i + \frac{\tilde{p}_i - r_i\varepsilon_i^{-1}}{2}\right)\right) = \varepsilon_i\varepsilon_{i,j}\left(\omega_j + \frac{\tilde{p}_j - r_i\varepsilon_i^{-1}\varepsilon_{i,j}^{-1}}{2}\right),$$
	but to compute $\Psi_j(\tau_i)$ we need to express $\tau_i$ as a function of $\tau_j$. The units previously introduced enable us to establish the link: we have
	$$\theta(\overline{\tau_i}) = \varepsilon_i\overline{\omega_i} = \varepsilon_i\varepsilon_{i,j}\overline{\omega_j} = \varepsilon_i\varepsilon_{i,j}\varepsilon_j^{-1}\theta(\overline{\tau_j}),$$
	hence ${\overline{\tau_i} = \eta\overline{\tau_j}}$ with ${\eta := \varepsilon_i\varepsilon_{i,j}\varepsilon_j^{-1}}$, since $\theta$ is an isomorphism. Writing $\tau_i = \eta \tau_j + \alpha$ for some $\alpha \in \Gamma(U_i \cap U_j,\os)$ to determine, Equation (\ref{eq}) tells us that $r_i = r_j\eta-2\alpha$, hence $\alpha = \frac{r_j\eta-r_i}{2}$ (since $2$ is not a zero divisor by hypothesis), so that
	$$\tau_i = \varepsilon_i\varepsilon_{i,j}\varepsilon_j^{-1}\left(\tau_j + \frac{r_j - r_i\varepsilon_i^{-1}\varepsilon_{i,j}^{-1}\varepsilon_j}{2}\right).$$
	
	Thus, we can compute
	\begin{align*}
		\Psi_j(\tau_i) & = \Psi_j\left(\varepsilon_i\varepsilon_{i,j}\varepsilon_j^{-1}\left(\tau_j + \frac{r_j - r_i\varepsilon_i^{-1}\varepsilon_{i,j}^{-1}\varepsilon_j}{2}\right)\right) \\
		& = \varepsilon_i\varepsilon_{i,j}\left(\omega_j + \frac{\tilde{p}_j - r_j\varepsilon_j^{-1}}{2}\right) + \varepsilon_i\varepsilon_{i,j}\varepsilon_j^{-1}\frac{r_j - r_i\varepsilon_i^{-1}\varepsilon_{i,j}^{-1}\varepsilon_j}{2} \\
		& = \psi_{i,j} \circ \Psi_i(\tau_i),
	\end{align*}
	as desired. Therefore, the quadratic algebra $\cc$ is isomorphic to $\Omega$, concluding the proof.
\end{proof}

Given some discriminant, we may wonder when the parity is necessary to preserve the uniqueness of the corresponding quadratic algebra. Here is a partial answer.

\begin{prop} \label{paspar}
	Given a valid triple, its parity is completely determined by its discriminant at least in the three following cases:
	\begin{itemize}[label=-,topsep=0.1cm,parsep=0cm,itemsep=0.1cm]
		\item $S$ is a scheme such that $2$ is a unit;
		\item $S$ is a scheme such that $2$ generates a prime ideal sheaf;
		\item $S$ is a normal scheme.
	\end{itemize}
\end{prop}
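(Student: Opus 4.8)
The plan is to prove the following sharper claim, from which the proposition follows at once: if $(\Delta_W,\Pi,\nn)$ and $(\Delta_W,\Pi',\nn)$ are two valid triples with the \emph{same} Wood-discriminant, then $\Pi=\Pi'$. The version with merely isomorphic Wood-discriminants is then deduced by transporting $\Pi'$ along an isomorphism $\nn\overset{\sim}{\longrightarrow}\nn'$ identifying the two Wood-discriminants, since validity is invariant under isomorphism of triples. By Theorem~\ref{uniqueness}, this also says that in the three cases a quadratic algebra is determined up to isomorphism by its Wood-discriminant alone.

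First I would reduce to a pointwise statement. A section of $\nn/2\nn$ is determined by its germs, so it suffices to show $\Pi_x=\Pi'_x$ in $(\nn/2\nn)_x=\nn_x/2\nn_x$ for every $x\in S$. Fix such an $x$ and set $R:=\mathcal{O}_{S,x}$. Applying Lemma~\ref{rel} to each of the two valid triples and taking germs at $x$ produces lifts $\widetilde\Pi,\widetilde\Pi'\in\nn_x$ of $\Pi_x,\Pi'_x$ satisfying $\Delta_{W,x}\equiv\widetilde\Pi^{\otimes2}\equiv(\widetilde\Pi')^{\otimes2}\pmod{4\nn_x^{\otimes2}}$. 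Since $\nn_x$ is free of rank $1$ over the local ring $R$, choosing a generator $\nu$ and writing $\widetilde\Pi=p\nu$, $\widetilde\Pi'=q\nu$ with $p,q\in R$ reduces everything to the arithmetic implication
\[ p^2\equiv q^2\pmod{4R}\ \Longrightarrow\ p\equiv q\pmod{2R}, \]
because $p\equiv q\pmod{2R}$ then gives $\widetilde\Pi\equiv\widetilde\Pi'\pmod{2\nn_x}$, hence $\Pi_x=\Pi'_x$. So it remains to verify this implication in $R$ in each of the three cases.

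If $2$ is a unit on $S$ it is vacuous, as $2R=R$ (indeed $\nn/2\nn=0$ and the parity is forced to be $0$). If $2$ generates a prime ideal sheaf, then $R=\mathcal{O}_{S,x}$ has $R/2R$ an integral domain; reducing modulo $2$ and using $2\overline q=0$ gives $(\overline p-\overline q)^2=\overline p^{\,2}-\overline q^{\,2}=0$ in $R/2R$, whence $\overline p=\overline q$, i.e. $p\equiv q\pmod{2R}$. If $S$ is normal, $R$ is a normal domain with fraction field $K$; since $2$ is not a zero divisor, $2\ne0$ in $R$, so writing $p^2-q^2=4t$ with $t\in R$ and setting $u:=\tfrac{p-q}{2}$, $v:=\tfrac{p+q}{2}\in K$, we get $u+v=p\in R$ and $uv=\tfrac{p^2-q^2}{4}=t\in R$, so $u$ is a root of the monic polynomial $X^2-pX+t\in R[X]$; thus $u$ is integral over $R$, and integral closedness of $R$ forces $u\in R$, i.e. $p-q=2u\in 2R$.

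The germ bookkeeping and the one-line arithmetic manipulations are routine; the genuinely non-formal step is the normal case. There one cannot simply invoke reducedness of $R/2R$, which may fail (for instance at a prime of a ring of integers ramified over $2$, where $R/2R$ has nilpotents), so the integral-closure argument is what does the work — and the hypothesis that $2$ is not a zero divisor is used precisely to give a meaning to $\tfrac12$ inside $K$.
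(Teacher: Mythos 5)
Your argument is essentially the paper's own: both proofs show that if $(\Delta_W,\Pi,\nn)$ and $(\Delta_W,\Pi',\nn)$ are valid then $\Pi=\Pi'$, by producing local lifts of the parities via Lemma~\ref{rel}, extracting the congruence $p^2\equiv q^2 \pmod{4}$, and then treating the three cases, the normal one by exhibiting a monic quadratic over the ring satisfied by $\frac{p-q}{2}$ and invoking integral closedness. Working at stalks rather than on an open covering is a cosmetic difference (and in fact slightly cleaner in the normal case, since local rings of a normal scheme are genuine integrally closed domains, whereas the paper has to pass to affine opens). The one real blemish is in your normal case: the proposition carries no hypothesis that $2$ is a non-zero-divisor --- the paper's proof explicitly allows $2=0$ in $\Gamma(U,\os)$ --- so your appeal to that hypothesis to guarantee $2\neq 0$, and hence to make sense of $\frac{p-q}{2}$ in the fraction field, imports an assumption you are not given and silently excludes the characteristic-$2$ situations the statement is meant to cover. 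The repair is one line: if $2=0$ in the domain $R=\mathcal{O}_{S,x}$, then $p^2\equiv q^2\pmod{4R}$ gives $(p-q)^2=p^2-q^2=0$, hence $p=q$; if $2\neq 0$, it is automatically a non-zero-divisor in the domain $R$ and your integral-closure argument applies verbatim. A similar micro-caveat applies to your second case: at a stalk where $2$ becomes a unit, $R/2R$ is the zero ring rather than an integral domain, but there the conclusion holds vacuously, exactly as in your first case.
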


\begin{proof}
	Let $(\Delta,\Pi,\nn)$ be a valid triple. If $2$ is a unit, then the quotient $\nn/2\nn$ is trivial, hence the parity must be $0$.
	
	For the two other cases, let $\Pi, \Pi' \in \Gamma(S,\nn/2\nn)$ be two parities, such that the triples $(\Delta,\Pi,\nn)$ and $(\Delta,\Pi',\nn)$ are valid. Let $(U_i)_i$ be an open covering of $S$ such that $\nn_{\vert U_i}$ is free. Fix $U$, one of the $U_i$'s. As seen in Proposition~\ref{constr}, if $\tilde{\Pi}_U$ and $\tilde{\Pi}'_U$ denote lifts of $\Pi_{\vert U}$ and $\Pi'_{\vert U}$ to $\Gamma(U,\nn)$, then ${\tilde{\Pi}_U^{\otimes 2} \equiv \Delta_{\vert U} \equiv \tilde{\Pi}'^{\otimes 2}_U \pmod{4\Gamma(U,\nn^{\otimes 2})}}$. Writing ${\Delta_{\vert U} = d \nu \otimes \nu}$, ${\tilde{\Pi}_U = \tilde{p}\nu}$ and ${\tilde{\Pi}'_U = \tilde{p}'\nu}$ for some ${d,\tilde{p},\tilde{p}' \in \Gamma(U,\os)}$ and for $\nu$ a generator of $\nnu$, we have the relation ${\tilde{p}^2 \equiv \tilde{p}'^2 \pmod{4\Gamma(U,\os)}}$.
	
	In particular, ${(\tilde{p}-\tilde{p}')^2 \equiv 0 \pmod{2\Gamma(U,\os)}}$, hence if $2$ generates a prime ideal sheaf in $\os$, we get ${\tilde{p} \equiv \tilde{p}' \pmod{2\Gamma(U,\os)}}$.
	
	Now, suppose that $S$ is normal. If $U$ is not affine, let $(V_j)_j$ be an open affine covering of $S$. For all $j$, there is an open covering of ${U \cap V_j}$ by open affine subsets $W_{j,k}$. Considering the $W_{j,k}$'s instead of $U$ enables to assume that $U$ is affine. Therefore, the ring $\Gamma(U,\os)$ is integrally closed, hence integral. On the other hand, we know that there exists ${s \in \Gamma(U,\os)}$ such that ${\tilde{p}^2 - \tilde{p}'^2 = 4s}$. If ${2 = 0}$ in $\ou$, then ${(\tilde{p} - \tilde{p}')^2 = 0}$, but $\Gamma(U,\os)$ is an integral domain, hence ${\tilde{p} = \tilde{p}'}$ and we are done. Otherwise, let ${t := \frac{\tilde{p} - \tilde{p}'}{2} \in \Frac(\secs{U})}$. A direct computation shows that we have the equation $$t^2 + \tilde{p}' t - s = 0,$$ hence ${t \in \Gamma(U,\os)}$ by normality, that is, ${\tilde{p} \equiv \tilde{p}' \pmod{2\Gamma(U,\os)}}$. Since $\Pi$ and $\Pi'$ coincide locally on an open covering of $S$, they must be equal.
\end{proof}

\begin{rem}
	The normality condition was already known by Butts and Estes (\cite[Corollary~2.6]{Butts-Estes}). Actually, one can check carefully that it is enough for $S$ to be normal over points of residual characteristic $2$.
\end{rem}

\begin{ex}
	Let ${R = \Z[\sqrt{8}]}$, then $R$ does not satisfy any of the conditions of Proposition~\ref{paspar} (one can check that ${\sqrt{2} \notin R_{\left\langle 2,\sqrt{8}\right\rangle} \subset \Q(\sqrt{2})}$). In fact, the parity is necessary in that case to distinguish two quadratic algebras with the same discriminant, in view of Example~\ref{cexpar}.
\end{ex}

In Theorem~\ref{uniqueness}, it is necessary to assume that $2$ is not a zero divisor in $\os$, otherwise the triple $(\Delta, \Pi,\nn)$ might not be enough to characterize isomorphism classes of quadratic algebras! We give below two examples of non-isomorphic free quadratic algebras with isomorphic types, one in characteristic $2$ with ${\Delta \neq 0}$ (Example~\ref{pasuni}), the other over $\Z/8\Z$ with ${\Delta = 0}$ (Example~\ref{pasuni2}).

\begin{ex} \label{pasuni}
	Let ${R = \F_2[X]/\langle X^2+X+1 \rangle \simeq \F_4}$ and let ${S = \spec(R)}$. To sense the problem, notice that since the characteristic of $R$ is $2$, the parity is an element of $R$ and the discriminant is its square, hence the information is redundant. Consider the two quadratic algebras defined by
	$$C_1 = \faktor{R[\tau_1]}{\langle \tau_1^2+\tau_1+1 \rangle} \simeq \F_4 \times \F_4 \text{~~~and~~~} C_2 = \faktor{R[\tau_2]}{\langle \tau_2^2+\tau_2+X \rangle} \simeq \F_8.$$
	Both $C_1$ and $C_2$ have types isomorphic to $(1,1,R)$, but they are not isomorphic.
\end{ex}

\begin{ex} \label{pasuni2}
	Let ${R = \Z/4\Z}$ and ${C_s = \faktor{R[\tau]}{\left\langle \tau^2-s \right\rangle}}$ for ${s \in R}$. Then the four quadratic algebras $C_s$ all have type $(0,0,R)$, but one can check that they are not isomorphic.
\end{ex}

\subsection{Linear binary quadratic forms and Wood's bijection} \label{secgl2gl1}

We now focus on linear binary quadratic forms. Defining the parity in that context will complete the bridge between them and quadratic algebras.

All definitions, except those about the parity and the type, are extracted from \cite{Wood}.

\begin{defi} \label{lbqf}
	A \textit{linear binary quadratic form over $S$} is a triple $(\vv,\LL,q)$ where $\vv$ is a locally free $\os$-module of rank $2$, $\LL$ is a locally free $\os$-module of rank $1$, and $q$ is a global section of $\sym^2\vv\otimes_{\os} \LL$.
	
	Let $U \subseteq S$ be an open subset such that $\vvu$ and $\LLU$ are free. Let $(x,y)$ be an $\ou$-basis of $\vvu$ and $(z)$ be an $\ou$-basis of $\LLU$. Omitting the tensor products by abuse of notation, $q$ can then be expressed locally on $U$ as ${ax^2z+bxyz+cy^2z}$ for some ${a,b,c \in \Gamma(U,\os)}$, which we sometimes denote by $[a,b,c]$, omitting the underlying bases. Then $(\vv,\LL,q)$ is said to be \textit{primitive} if, for every such $U$, the ideal generated by $a,b,c$ in $\secs{U}$ is the whole ring $\secs{U}$.
\end{defi}

\begin{defi}
	Two linear binary quadratic forms $(\vv,\LL,q)$ and $(\vv',\LL',q')$ are \textit{equivalent} if there exist two isomorphisms of $\os$-modules ${f \colon \vv \overset{\sim}{\longrightarrow} \vv'}$ and ${g \colon \LL \overset{\sim}{\longrightarrow} \LL'}$ such that $\sym^2f \otimes g$ sends $q$ to $q'$. This can be seen as a left action of ${\gl_2(\vv) \times \gl_1(\LL)}$, which we refer to as the ${\gl_2 \times \gl_1}$-action.
	
	Locally, we can describe this action explicitly. Let ${U \subseteq S}$ be some open subset such that $\vvu$ and $\LLU$ are free. Since every ${q \in \Gamma(U,\sym^2\vv \otimes_{\os} \LL)}$ is a linear combination of elementary tensors, it is enough to compute ${(\mu,\varepsilon)\cdot x_1\otimes x_2 \otimes z}$ for every $x_1,x_2 \in \Gamma(U, \vv)$ and every $z \in \Gamma(U,\LL)$. Then a pair ${\left(\mu,\varepsilon\right) \in \Gamma(U,\glzbis{\os})}$ acts on $x_1\otimes x_2 \otimes z$ via
	\begin{equation} \label{action}
		\left(\mu,\varepsilon\right)\cdot x_1\otimes x_2 \otimes z :=  \mu x_1\otimes \mu x_2 \otimes \varepsilon z.
	\end{equation}
\end{defi}

\begin{defi}
	The \textit{discriminant $\Delta$ of a linear binary quadratic form $(\vv,\LL,q)$} is the pair $(\Delta(\vv,\LL,q),\Lambda^2\vv\otimes_{\os} \LL)$ where $\Delta(\vv,\LL,q)$ is the global section of $(\Lambda^2\vv\otimes_{\os} \LL)^{\otimes 2}$ locally defined as follows. Let $U \subseteq S$ be some open subset such that ${\vvu \simeq \mathcal{O}_U^2}$ and ${\LLU \simeq \mathcal{O}_U}$; we can write ${q = ax^2z + bxyz + cy^2z}$ in some $\ou$-bases $(x,y)$ of $\vvu$ and $(z)$ of $\LLU$, and we set ${\Delta(\vv,\LL,q)_{\vert U} := (b^2-4ac) ((x \wedge y) \otimes z)^{\otimes 2}}$.
\end{defi}

\begin{rem} \label{pbdisc}
	The effect of some matrix $\mu$ in $\gl_2(\os)$ (acting on $\vv$) on the discriminant is to multiply it by $\det(\mu)^2$, while the $\gl_1$-part, acting on $\LL$, multiplies it by the square of some unit. Therefore, considering $\gl_2\times\gl_1$-classes necessitates weakening the usual invariance of the discriminant, as we did for quadratic algebras.
\end{rem}

As for Definition~\ref{defpargen}, we denote by ${\chi_{U,\nn} \colon \faktor{\Gamma(U,\nn)}{2\Gamma(U,\nn)} \hookrightarrow \Gamma(U,\nn/2\nn)}$ the canonical homomorphism of rings, for any open subset $U \subseteq S$ and $\os$-module $\nn$.

\begin{defi} \label{defpargenlbqf}
	The \textit{parity of a linear binary quadratic form $(\vv,\LL,q)$} is the pair $(\Pi(\vv,\LL,q),\Lambda^2\vv\otimes_{\os} \LL)$ where $\Pi(\vv,\LL,q)$ is the global section of $\faktor{\Lambda^2\vv\otimes_{\os} \LL}{2(\Lambda^2\vv\otimes_{\os} \LL)}$ locally defined as follows. Let ${U \subseteq S}$ be an open subset such that $\vvu$ and $\LLU$ are free. Let $(x,y)$ and $(z)$ be respective $\ou$-bases of $\vvu$ and $\LLU$, and write ${q = ax^2z+bxyz+cy^2z}$ for some ${a,b,c \in \Gamma(U,\os)}$. Then we define $\Pi(\vv,\LL,q)_{\vert U}$ to be the image under $\chi_{U,\Lambda^2\vv\otimes \LL}$ of the class of $b (x \wedge y) \otimes z$ modulo ${2\Gamma(U,\Lambda^2\vv\otimes_{\os} \LL)}$.
\end{defi}

\begin{defi}
	Let $(\vv,\LL,q)$ be a linear binary quadratic form. Then its \textit{type} is the triple $(\Delta(\vv,\LL,q),\Pi(\vv,\LL,q),\Lambda^2\vv\otimes_{\os} \LL)$.
	
	We say that two types are \textit{isomorphic} if they are in the sense of Definition~\ref{deftype}.
\end{defi}

One can prove the following Proposition by applying the same reasoning as in the context of quadratic algebras (Propositions~\ref{discwood} and \ref{parinv}). One can also derive it from the bijection of Theorem~\ref{Woodgl2gl1}.

\begin{prop} \label{typelbqf}
	The discriminant and the parity of a linear binary quadratic form are well-defined, and two equivalent linear binary quadratic forms have isomorphic types.
\end{prop}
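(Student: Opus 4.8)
The plan is to prove Proposition~\ref{typelbqf} directly, mimicking the structure of the proofs of Propositions~\ref{discwood} and \ref{parinv}. There are two things to establish: \emph{well-definedness} of $\Delta_W(\vv,\LL,q)$ and $\Pi(\vv,\LL,q)$ as global sections (i.e.\ independence of the local choices of bases, and compatibility on overlaps so that the local data glue), and \emph{invariance} under the $\gl_2\times\gl_1$-action, i.e.\ two equivalent linear binary quadratic forms have isomorphic types.

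First I would treat well-definedness. Fix an open $U$ on which $\vvu$ and $\LLU$ are free, and suppose $(x,y),(z)$ and $(x',y'),(z')$ are two choices of bases. There is a matrix $\mu = \begin{pmatrix}\alpha&\beta\\\gamma&\delta\end{pmatrix}\in\Gamma(U,\gl_2(\os))$ expressing $(x',y')$ in terms of $(x,y)$, and a unit $\varepsilon$ with $z' = \varepsilon z$; then $(x'\wedge y')\otimes z' = \varepsilon\det(\mu)\,(x\wedge y)\otimes z$. Substituting into $q = ax^2z+bxyz+cy^2z$ via \eqref{action}, one gets new coefficients $[a',b',c']$, and the key classical identity is $b'^2-4a'c' = \det(\mu)^2(b^2-4ac)$, together with $b' \equiv \det(\mu)\,b \pmod{2}$ modulo the relevant reduction (this second congruence is exactly the local computation already carried out for quadratic algebras in the proof of Proposition~\ref{parinv}, via Equation~\eqref{eq}). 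Feeding these into the formulas for $\Delta_W(\vv,\LL,q)_{\vert U}$ and $\Pi(\vv,\LL,q)_{\vert U}$, the factors of $\varepsilon^2\det(\mu)^2$ in the $\otimes 2$ of $(x\wedge y)\otimes z$ cancel the $\det(\mu)^2$ (resp.\ $\varepsilon\det(\mu)$) coming from the coefficients, so the local expressions agree; for the parity one must also note that $\chi_{U,\Lambda^2\vv\otimes\LL}$ is injective so the comparison can be made in $\Gamma(U,\Lambda^2\vv\otimes\LL)/2\Gamma(U,\Lambda^2\vv\otimes\LL)$. Taking an open covering $(U_i)$ trivializing $\vv$ and $\LL$, compatibility on overlaps $U_i\cap U_j$ is precisely this same change-of-basis computation, so the local sections glue to global sections of $(\Lambda^2\vv\otimes\LL)^{\otimes 2}$ and of $(\Lambda^2\vv\otimes\LL)/2(\Lambda^2\vv\otimes\LL)$ respectively.

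Next I would handle invariance under equivalence. Given an equivalence $(f,g)$ between $(\vv,\LL,q)$ and $(\vv',\LL',q')$, on an open $U$ trivializing all modules, $f$ is given by some $\mu\in\Gamma(U,\gl_2(\os))$ and $g$ by a unit, and pushing forward an $\ou$-basis of $\vvu$ (resp.\ $\LLU$) gives an $\ou$-basis of $\vv'_{\vert U}$ (resp.\ $\LL'_{\vert U}$) in which $q'$ has the \emph{same} coefficients $[a,b,c]$ as $q$. Hence the wedge$\otimes$-basis elements correspond under $\Lambda^2 f\otimes g$, and the local formulas for the Wood-discriminant and parity of $q'$ in the pushed-forward basis coincide verbatim with those of $q$. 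Therefore $\Lambda^2 f\otimes g$ induces an isomorphism $\Lambda^2\vv\otimes\LL \overset{\sim}{\to} \Lambda^2\vv'\otimes\LL'$ carrying $(\Delta_W(\vv,\LL,q),\Pi(\vv,\LL,q))$ to $(\Delta_W(\vv',\LL',q'),\Pi(\vv',\LL',q'))$, which is exactly the statement that the types are isomorphic.

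The routine change-of-variables algebra — verifying $b'^2-4a'c' = \det(\mu)^2(b^2-4ac)$ and the parity congruence — is the only computational content, and it is mild and entirely analogous to Equation~\eqref{eq} and the proof of Proposition~\ref{parinv}; so I do not expect a genuine obstacle. The one point that deserves care, as in the quadratic-algebra case, is the passage through the canonical map $\chi_{U,\nn}$ for the parity: the local lifts live in $\Gamma(U,\nn)/2\Gamma(U,\nn)$ and only their images in $\Gamma(U,\nn/2\nn)$ are intrinsic, so all comparisons must be phrased via $\chi_{U,\Lambda^2\vv\otimes\LL}$ and its injectivity on affine (or suitably small) opens. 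Alternatively, as the statement notes, once Wood's type-preserving bijection (Theorem~\ref{Woodgl2gl1}) is available one may simply transport Propositions~\ref{discwood}, \ref{parinv} and \ref{typeinv} across it; but giving the direct argument keeps this subsection self-contained.
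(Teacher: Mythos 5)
Your proposal is correct and takes essentially the same route as the paper, which simply observes that the statement follows by the same reasoning as Propositions~\ref{discwood} and \ref{parinv} (or, alternatively, can be derived from the bijection of Theorem~\ref{Woodgl2gl1}); you have just written out that local change-of-basis computation and the gluing explicitly. The only blemish is a bookkeeping slip in the stated identities, which should carry the $\gl_1$ factor as well, namely $b'^2-4a'c'=\varepsilon^{2}\det(\mu)^{2}(b^2-4ac)$ and $b'\equiv \varepsilon\det(\mu)\,b \pmod{2}$, after which the cancellation against the transformation of $((x\wedge y)\otimes z)^{\otimes 2}$ (resp.\ of $(x\wedge y)\otimes z$ modulo $2$) works exactly as you describe.
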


One of the main features of the type is that Wood's bijection (\cite[Theorem~1.5]{Wood}) is type-preserving, meaning that if a linear binary quadratic form matches to a pair $(\cc,\mm)$, then they have isomorphic types. This is proved in the following Theorem, stating the obtained bijection when we fix the type and when $2$ is not a zero divisor on the base scheme $S$.

\begin{theo} \label{Woodgl2gl1}
	Let $S$ be a scheme such that $2$ is not a zero divisor, and let $\cc_0$ be a quadratic $\os$-algebra.
	For all $\mm$, ${\mm' \in \pic(\cc_0)}$, we write ${\mm \sim \mm'}$ if there exists an automorphism $\psi$ of $\cc_0$ over $S$ such that ${\mm' \simeq \psi^{\ast}\mm}$ as $\cc_0$-modules. Then there is a set-theoretical bijection
	$$ \left\{
	\begin{array}{c}
	\gl_2 \times \gl_1 \!{} \mhyphen equivalence~classes~of~primitive \\
	linear~binary~quadratic~forms~whose~type \\
	is~isomorphic~to~ (\Delta(\cc_0),\Pi(\cc_0),(\cc_0/\os)^{\vee})
	\end{array}
	\right\} \overset{1 : 1}{\longleftrightarrow} \faktor{\pic(\cc_0)}{\sim}$$
	
	By $\psi^{\ast}\mm$, we mean the $\cc_0$-module $\mm$ whose structure is locally given by ${\lambda \cdot x = \psi^{-1}(\lambda)x}$.
\end{theo}

\begin{proof}
	We start from Wood's Theorem 1.5 in \cite{Wood}. The bijection she established links equivalence classes of primitive linear binary quadratic forms and classes of pairs of the form $(\cc,\mm)$ where $\cc$ is a quadratic algebra and $\mm$ is an invertible $\cc$-module. Two such pairs $(\cc,\mm)$ and $(\cc',\mm')$ are isomorphic if there exists an isomorphism of quadratic algebras ${\psi \colon \cc \overset{\sim}{\longrightarrow} \cc'}$ and an isomorphism of $\cc'$-modules ${\varphi \colon \mm \otimes_{\cc} \cc' \overset{\sim}{\longrightarrow} \mm'}$.
	
	According to \cite[Remark~2.2]{Wood}, the local construction of a pair $(\cc,\mm)$ from a linear binary quadratic form $(\vv,\LL,q)$ in the bijection is as follows. Over an open subset $U \subseteq S$ such that $\vvu$ and $\LLU$ are free, let $(x,y)$ and $(z)$ be $\ou$-bases of $\vvu$ and $\LLU$ respectively, and let $(x^{\vee},y^{\vee})$ and $(z^{\vee})$ be the respective dual bases. Write $q = ax^2z+bxyz+cy^2z$ for some ${a,b,c \in \Gamma(U,\os)}$. Then
	$$\ccu = \ou \oplus (\Lambda^2\vvu\otimes_{\ou} \LLU)^{\vee} = \ou \oplus \tau \ou$$
	as $\ou$-modules where ${\tau := (x^{\vee} \wedge y^{\vee}) \otimes z^{\vee}}$, and the algebra structure is given by ${\tau^2+b\tau+ac = 0}$. Regarding $\mm$, one takes ${\mm = \vv}$ as $\os$-modules, and its $\ccu$-module structure is given by ${\tau \cdot x = -bx-cy}$ and ${\tau \cdot y = ax}$.
	
	Computing the discriminant and the parity, we have
	$$\Delta(\vv,\LL,q)_{\vert U} = (b^2-4ac)((x \wedge y) \otimes z))^{\otimes 2} = (b^2-4ac)\ortau^{\vee}\otimes\ortau^{\vee} = \Delta(\cc)_{\vert U}$$
	and
	$$\Pi(\vv,\LL,q)_{\vert U} \equiv b((x \wedge y) \otimes z) = b\ortau^{\vee} \equiv \Pi(\cc)_{\vert U} \pmod{2\Gamma(U,\Lambda^2\vvu\otimes_{\ou} \LLU)}.$$
	Therefore, Wood's bijection is type-preserving, in the sense that if a linear binary quadratic form matches to a pair $(\cc,\mm)$ in the bijection, then they have isomorphic types.
	
	When $2$ is not a zero divisor on $S$, Theorem \ref{uniqueness} applies and isomorphism classes of quadratic algebras are determined by their types. Thus, given a quadratic algebra $\cc_0$, restricting Wood's bijection to linear binary quadratic forms whose type is isomorphic to the one of $\cc_0$ gives the desired bijection. In particular, every class of pairs $(\cc,\mm)$ with $\cc$ having isomorphic type to the one of $\cc_0$ has a representative of the form $(\cc_0,\mm_0)$ for some $\mm_0$.
\end{proof}

\begin{rem}
	If we do not require our quadratic forms to be primitive, then one can check that we still have a bijection in Theorem~\ref{Woodgl2gl1}, but with classes of \textit{traceable} modules instead of invertible ones in the right-hand side of the bijection (\textit{cf}. \cite{Wood} for the definition of traceable).
\end{rem}

\begin{rem} \label{conj}
	One must take care about the fact that $\faktor{\pic(\cc)}{\sim}$ is not a group in general! For instance, over $\Z$, there is a unique non-trivial automorphism of quadratic algebras (called the conjugation) which identifies an ideal class with its inverse, as shown in the Example below\ldots
\end{rem}

\begin{ex} \label{exzauto}
	The class group of $\Z[\sqrt{-11}]$ is of size $3$, and representatives of the three different ideal classes are $\left\langle 1,\sqrt{-11} \right\rangle$, $I := \left\langle 3,\sqrt{-11}-1 \right\rangle$, $J := \left\langle 3,\sqrt{-11}+1 \right\rangle$. But the conjugation sends $I$ to $J$, hence Theorem~\ref{Woodgl2gl1} gives a bijection between two sets of two elements. More precisely, it associates to principal ideals the class of the linear binary quadratic form $x^2z+11y^2z$ on the one hand, and to the class of $I$ the class of the linear binary quadratic form $3x^2z+2xyz+4y^2z$ on the other hand.
\end{ex}

\section{The Picard group of a quadratic algebra when $2$ is not a zero divisor} \label{pic}

Given a quadratic algebra $\cc_0$, Theorem~\ref{Woodgl2gl1} is close to a parametrization of its Picard group, the only obstruction being the existence of non-trivial automorphisms of $\cc_0$. The goal of this Section is to rigidify quadratic algebras so that we remove all the non-trivial automorphisms, with the notion of \textit{orientation}. When $2$ is not a zero divisor, this will be enough to recover the Picard group in Theorem~\ref{Woodgl2gl1}.

\subsection{Orientation and twist} \label{sectwori}

As already noticed in Remark~\ref{conj}, quadratic algebras over $\Z$ already have a non-trivial automorphism, the conjugation. But over non-integral rings or schemes, there may be many more automorphisms!

\begin{ex} \label{exauto}
	Let ${R_1 = R_2 = \Z}$. Choose $d_1$ and $d_2 \in \Z$ non-squares, and let ${C_1 = \Z[\sqrt{d_1}]}$, ${C_2 = \Z[\sqrt{d_2}]}$. If ${\sigma_1 \in \Aut_{\Z}(C_1)}$ and ${\sigma_2 \in \Aut_{\Z}(C_2)}$ are the respective conjugations of $C_1$ and $C_2$, then the maps $id \times id$, $\sigma_1 \times id$, $id \times \sigma_2$ and $\sigma_1 \times \sigma_2$ are all automorphisms of the $\Z \times \Z$-quadratic algebra $C_1 \times C_2$.
\end{ex}

Wood already defined the orientation of a quadratic algebra $\cc$ in the case when $\cc/\os$ is free (\cite[Theorem~5.2]{Wood}). Inspired by her definition, we extend it to the general case.

\begin{defi} \label{defori}
	Let $\nn$ be a locally free $\os$-module of rank $1$. An \textit{$\nn$-oriented quadratic algebra} is a pair $(\cc,\theta)$ where $\cc$ is a quadratic algebra and ${\theta \colon \cc/\os \overset{\sim}{\longrightarrow} \nn^{\vee}}$ is an isomorphism of $\os$-modules, called an \textit{orientation} of $\cc$.
	
	Given two $\nn$-oriented quadratic algebras $(\cc,\theta)$ and $(\cc',\theta')$, an \textit{isomorphism of oriented quadratic algebras} is an isomorphism ${\psi \colon \cc \overset{\sim}{\longrightarrow} \cc'}$ of $\os$-algebras such that ${\theta = \theta' \circ \overline{\psi}}$, where ${\overline{\psi} \colon \cc/\os \longrightarrow \cc'/\os}$ is induced by $\psi$.
\end{defi}

\begin{rem}
	We have chosen the above convention (an isomorphism with the dual of $\nn$ instead of $\nn$ itself) in order to deal with discriminants and parities as global sections of $\nn^{\otimes 2}$ and $\nn/2\nn$ respectively.
\end{rem}

\begin{rem} \label{worifree}
	We recover Wood's notion of orientation (\cite[Theorem~5.2]{Wood}) by considering $\nn = \os^{\vee}$ in the above Definition.
\end{rem}

\begin{rem}
	Notice that ${\Lambda^2 \cc \simeq \cc/\os}$ through the isomorphism of $\os$-modules locally defined on open subsets such that $\cc$ is free by ${1\wedge\tau \mapsto \tau}$. Thus, an $\nn$-orientation may also be viewed as an isomorphism $\Lambda^2\cc \overset{\sim}{\longrightarrow} \nn^{\vee}$.
	
	When ${\cc = \faktor{\os[\tau]}{\left\langle \tau^2+r\tau+s \right\rangle}}$ is free, an $\os^{\vee}$-orientation of $\cc$ may be viewed as a choice of a classical orientation $1 \wedge \tau$ of $\cc$. In the general case, when $\cc/\os \simeq \nn^{\vee}$, an $\nn$-orientation can be interpreted as choices of local orientations of $\cc$ which are globally consistent.
\end{rem}

\begin{prop} \label{auto}
	Let $S$ be a scheme such that $2$ is not a zero divisor, and let $\nn \in \pic(S)$. Then the group of automorphisms of an $\nn$-oriented quadratic algebra over $S$ is trivial.
\end{prop}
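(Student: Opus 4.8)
The plan is to reduce everything to a local computation, using the fact that the condition $\theta = \theta' \circ \overline\psi$ in Definition~\ref{defori} rigidifies the situation enough that, locally, only a shift by a section of $\os$ is allowed, and that this section must vanish because $2$ is not a zero divisor.

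First I would take $(\cc,\theta)$ an $\nn$-oriented quadratic algebra and $\psi$ an automorphism of it, so that $\psi\colon\cc\overset{\sim}{\to}\cc$ is an isomorphism of $\os$-algebras with $\theta = \theta\circ\overline\psi$, i.e. $\overline\psi = \mathrm{id}$ on $\cc/\os$ (since $\theta$ is an isomorphism). Then I would pass to an open covering $(U_i)_i$ of $S$ such that $\ccu[U_i]$ is free, say $\ccu[U_i] = \faktor{\mathcal{O}_{U_i}[\tau_i]}{\left\langle \tau_i^2+r_i\tau_i+s_i \right\rangle}$ as in Proposition~\ref{freedirectsummand}. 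Working over a fixed such $U = U_i$, write $\psi(\tau) = \varepsilon\tau + \alpha$ for some $\varepsilon \in \Gamma(U,\os)^{\times}$ and $\alpha\in\Gamma(U,\os)$ (it is a unit coefficient because $\psi$ is an isomorphism of $\os$-modules fixing $1$, or more precisely because $\psi$ induces an automorphism of $\ccu/\ou$). The orientation-compatibility $\overline\psi = \mathrm{id}$ forces $\overline{\psi(\tau)} = \ortau$ in $\ccu/\ou$, that is, $\varepsilon\ortau = \ortau$, hence $\varepsilon = 1$. So locally $\psi(\tau) = \tau + \alpha$.

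Next I would use that $\psi$ is a ring homomorphism: applying Equation~\eqref{eq} with $\varepsilon = 1$, the element $\tau' := \tau + \alpha$ satisfies $\tau'^2 + (r - 2\alpha)\tau' + (\alpha^2 - r\alpha + s) = 0$; but since $\psi$ is an automorphism of the algebra $\cc_{\vert U}$ and $(1,\tau)$ is a basis, $\psi(\tau) = \tau'$ must satisfy the \emph{same} equation $\tau'^2 + r\tau' + s = 0$. Comparing the linear coefficients gives $r - 2\alpha = r$, i.e. $2\alpha = 0$, and since $2$ is not a zero divisor in $\Gamma(U,\os)$ this yields $\alpha = 0$. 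Hence $\psi_{\vert U} = \mathrm{id}$ on each $U_i$, and since the $U_i$ cover $S$ and $\cc$ is a sheaf, $\psi = \mathrm{id}$ globally.

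I do not expect a serious obstacle here; the argument is essentially the local computation already carried out in the proofs of Lemma~\ref{freeok} and Theorem~\ref{uniqueness}, only much simpler because the orientation pins down $\varepsilon = 1$ from the outset. The one point requiring a little care is the justification that, locally, an automorphism of the oriented algebra is of the form $\tau\mapsto\varepsilon\tau+\alpha$ with $\varepsilon$ a unit — this follows because $\psi$ is in particular an $\os$-linear automorphism of $\ccu[U_i]$ fixing the unit section, so it is given by an invertible matrix $\begin{pmatrix}1 & \alpha \\ 0 & \varepsilon\end{pmatrix}$ in the basis $(1,\tau_i)$, whose invertibility forces $\varepsilon\in\Gamma(U_i,\os)^{\times}$ — and then the orientation condition collapses $\varepsilon$ to $1$ as above.
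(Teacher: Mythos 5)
Your proposal is correct and follows essentially the same route as the paper's proof: write $\psi(\tau)=\varepsilon\tau+\alpha$ locally, use the orientation condition to force $\varepsilon=1$, then use that $\psi(\tau)$ satisfies the same monic quadratic equation as $\tau$ to get $2\alpha=0$, hence $\alpha=0$ since $2$ is not a zero divisor, and glue. The only cosmetic difference is that you invoke Equation~(\ref{eq}) and uniqueness of the minimal equation in the basis $(1,\tau')$, whereas the paper expands $(\tau+\alpha)^2+r(\tau+\alpha)+s$ directly and identifies coefficients in the basis $(1,\tau)$; these are the same computation.
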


\begin{proof}
	Let $(\cc,\theta)$ be an oriented quadratic algebra over $S$ and let $\psi$ be an automorphism of $(\cc,\theta)$. First, assume that $\cc$ is free. As seen in Proposition~\ref{freedirectsummand}, we can choose an $\os$-basis $(1,\tau)$ of $\cc$ such that $(\ortau)$ is a basis of $\cc/\os$, where $\tau \in \Gamma(S,\cc)$ is such that ${\tau^2 + r\tau + s = 0}$ for some ${r,s \in \secs{S}}$. Write ${\psi(\tau) = \varepsilon \tau + \alpha}$ for some ${\varepsilon \in \secs{S}^{\times}}$ and ${\alpha \in \secs{S}}$. Since $\psi$ is an automorphism, we must have ${\theta(\ortau) = \theta(\overline{\psi}(\ortau))}$, leading to ${\theta(\ortau) = \varepsilon \theta(\ortau)}$. We infer that ${\varepsilon = 1}$, for $\theta(\ortau)$ is a unit.
	
	Since $\psi$ is an automorphism, $\psi(\tau)$ must be a root of the polynomial ${X^2+rX+s}$. We compute: $$(\tau+\alpha)^2 + r(\tau+\alpha) + s = 2\alpha\tau + \alpha(\alpha + r),$$
	hence we must have ${2\alpha = 0}$ and ${\alpha(\alpha+r) = 0}$, by identification in the basis $(1,\tau)$. Since $2$ is not a zero divisor, we conclude that ${\alpha = 0}$, hence $\psi$ must be the identity map.
	
	In the general case, let $(U_i)_i$ be an open covering of $S$ such that $\cc_{\vert U_i}$ is free for all $i$. Then the previous reasoning is valid on $U_i$, hence $\psi$ coincides with the identity map on $U_i$ for all $i$. Therefore, $\psi$ must be the identity map globally.
\end{proof}

When $2$ is a zero divisor, non-trivial automorphisms can preserve the orientation, as Example~\ref{survieauto} shows in the free case.

\begin{ex} \label{survieauto}
	Let ${R = \Z/8\Z}$, let ${C := \faktor{R[\tau]}{\left\langle \tau^2-2 \right\rangle}}$. Then $\Aut_R(C)$ has cardinality $8$, and its elements are the maps ${\tau \mapsto u\tau+v}$, for ${u \in R^{\times}}$ and ${v \in \set{0,4}}$. Furthermore, if $\theta$ is an $R$-orientation of $C$, then there are two automorphisms of $(C, \theta)$, the identity and the map ${\tau \mapsto \tau + 4}$.
\end{ex}

\begin{ex}
	A simpler case is $R = \F_2$ and $C = \F_4$. For any $R$-orientation $\theta$ of $C$, the oriented quadratic algebra $(C,\theta)$ has two automorphisms.
\end{ex}

Theorem~\ref{Woodgl2gl1} links primitive linear binary quadratic forms to the data of a quadratic algebra and an invertible module. Taking $\os$-oriented quadratic algebras instead of non-oriented ones, Wood modified the notion of linear binary quadratic form to preserve the bijection. Doing it in the general case of $\nn$-oriented quadratic algebras for every $\nn \in \pic(S)$, we introduce the following notion.

\begin{defi} \label{defgltw}
	Let $\nn \in \pic(S)$. An \textit{$\nn$-twisted binary quadratic form}, denoted as a pair $(\vv,q)$, is the linear binary quadratic form obtained when we set ${\LL = \Lambda^2\vv^{\vee} \otimes_{\os} \nn}$ in Definition~\ref{lbqf}; in particular $q$ is a global section of ${\sym^2\vv \otimes_{\os} \Lambda^2 \vv^{\vee} \otimes_{\os} \nn}$. Locally on some open subset ${U \subseteq S}$ such that both $\vvu$ and $\nnu$ are free, $\ou$-bases $(x,y)$ of $\vvu$ and $(\nu)$ of $\nnu$ will induce the basis $(\nu\det_{(x,y)})$ on $\LLU$, where we write the tensor product as a product by abuse of notation. The induced left action is called the \textit{$\nn$-twisted action}, and we refer to it with the notation $\gltw$. Locally, a matrix ${\mu \in \gl_2(\ou)}$ acts in the twisted way on a quadratic form ${q = ax^2z + bxyz + cy^2z}$ (where ${z = \nu\det_{(x,y)}}$) as follows:
	\begin{equation} \label{actiontw}
		\mu \cdot q := \left(\mu,\frac{1}{\det(\mu)}\right) \cdot q,
	\end{equation}
	where the action on the right-hand side of (\ref{actiontw}) corresponds to the $\gl_2 \times \gl_1$-action from Equation (\ref{action}).
\end{defi}

\begin{rem} \label{fix}
	Notice that the extra factor $\frac{1}{\det(\mu)}$ in Equation (\ref{actiontw}) counterbalances the effect of $\gl_2$ on the discriminant, by Remark~\ref{pbdisc}.
\end{rem}

\begin{defi}
	Let $\nn \in \pic(S)$. An $\nn$-twisted binary quadratic form is said to be \textit{primitive} if it is when viewed as a linear binary quadratic form.
\end{defi}

Wood noticed that making specific choices of $\LL$ leads to other kinds of bijections, such as bijections with binary quadratic forms for ${\LL = \os}$ or twisted quadratic forms for ${\LL = \Lambda^2 \vv^{\vee}}$ (\cite[Theorems~5.1 and 5.2]{Wood}). In the same spirit, we state here the version corresponding to our setting (${\LL = \Lambda^2\vv^{\vee} \otimes_{\os} \nn}$), directly for primitive forms.

\begin{theo} \label{Woodgltw}
	Let $\nn \in \pic(S)$. There is a set-theoretical bijection
	$$ \left\{
	\begin{array}{c}
	\gltw \!{} \mhyphen equivalence~classes~of \\
	primitive~\nn \mhyphen twisted~binary \\
	quadratic~forms~over~ S
	\end{array}
	\right\} \overset{1 : 1}{\longleftrightarrow}  \left\{
	\begin{array}{c}
	isomorphism~classes~of~ (\cc, \theta, \mm), \\ with~ (\cc, \theta) ~an~\nn \mhyphen oriented \\ quadratic~algebra~over~ S, \\ ~and~ \mm ~an~invertible~ \cc \mhyphen module
	\end{array}
	\right\}.$$
	
	An isomorphism ${(\cc,\theta,\mm) \overset{\sim}{\longrightarrow} (\cc',\theta',\mm')}$ is a pair $(\psi,\varphi)$ where ${\psi \colon (\cc,\theta) \overset{\sim}{\longrightarrow} (\cc',\theta')}$ is an isomorphism of $\nn$-oriented algebras, and ${\varphi \colon \mm \otimes_{\cc} \cc' \overset{\sim}{\longrightarrow} \mm'}$ is an isomorphism of $\cc'$-modules.
\end{theo}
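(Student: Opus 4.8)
The plan is to obtain Theorem~\ref{Woodgltw} by specializing the $\gl_2 \times \gl_1$-bijection underlying Theorem~\ref{Woodgl2gl1} to the subclass of forms with $\LL = \Lambda^2\vv^{\vee} \otimes_{\os} \nn$, exactly as Wood does for $\LL = \Lambda^2\vv^{\vee}$ in \cite[Theorem~5.2]{Wood}. First I would recall that in Wood's construction the pair $(\cc, \mm)$ attached to a linear binary quadratic form $(\vv, \LL, q)$ has $\cc = \os \oplus (\Lambda^2\vv \otimes_{\os} \LL)^{\vee}$ as an $\os$-module, so $\cc/\os \simeq (\Lambda^2\vv \otimes_{\os} \LL)^{\vee}$ canonically. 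Substituting $\LL = \Lambda^2\vv^{\vee} \otimes_{\os} \nn$ gives $\Lambda^2\vv \otimes_{\os} \LL \simeq \Lambda^2\vv \otimes_{\os} \Lambda^2\vv^{\vee} \otimes_{\os} \nn \simeq \nn$, hence a \emph{canonical} isomorphism $\theta_q \colon \cc/\os \overset{\sim}{\longrightarrow} \nn^{\vee}$. So an $\nn$-twisted form comes equipped not just with a quadratic algebra and a module, but with a canonical $\nn$-orientation; this is the map the bijection will use.

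\textbf{Key steps.} The argument breaks into: (i) checking that the passage from a linear binary quadratic form with $\LL = \Lambda^2\vv^{\vee}\otimes_{\os}\nn$ to the triple $(\cc, \theta_q, \mm)$ is well-defined on $\gltw$-equivalence classes --- i.e. if $\mu \in \gl_2(\vv)$ acts in the twisted way (coupling $\mu$ with $\det(\mu)^{-1}$ on $\LL$), then the induced isomorphism $\cc \to \cc'$ is compatible with the canonical orientations $\theta_q, \theta_{q'}$; (ii) constructing the inverse map: given $(\cc, \theta, \mm)$, recover $\vv$ and $q$ by running Wood's inverse construction and using $\theta$ to identify $\cc/\os$ with $\nn^{\vee}$, thereby forcing $\LL = \Lambda^2\vv^{\vee}\otimes_{\os}\nn$; (iii) checking the two constructions are mutually inverse, which follows from the $\gl_2 \times \gl_1$-bijection of Wood once we verify the orientation bookkeeping matches. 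For (i), the crucial point is the choice of the counterweight $\det(\mu)^{-1}$ in Definition~\ref{defgltw}: under a $\gl_2$-change of basis $\mu$ on $\vv$, the module $\Lambda^2\vv$ transforms by $\det(\mu)$ and $\Lambda^2\vv^{\vee}$ by $\det(\mu)^{-1}$, so $\LL = \Lambda^2\vv^{\vee}\otimes_{\os}\nn$ transforms by $\det(\mu)^{-1}$, which is precisely the $\gl_1$-factor we couple with; hence $\Lambda^2\vv\otimes_{\os}\LL \simeq \nn$ is preserved \emph{on the nose}, and the canonical identification $\cc/\os \simeq \nn^{\vee}$ is respected by the induced algebra isomorphism. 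I would verify this locally: in $\ou$-bases, with $z = \nu\det_{(x,y)}$, a change of basis replaces $(x,y)$ by $(x,y)\mu$ and automatically $z$ by $(\det\mu)^{-1}z$ if we stay within the twisted family, and one reads off that $\tau = (x^{\vee}\wedge y^{\vee})\otimes z^{\vee} = \nu^{\vee}$ is basis-independent up to the scalar dictated by $\nu$, so $\theta_q$ glues and is transported correctly.

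\textbf{Main obstacle.} The bulk of the content is already in Wood's $\gl_2 \times \gl_1$-bijection and in Theorem~\ref{Woodgl2gl1}; the genuinely new work is the orientation bookkeeping in step (i), namely showing that the twisted action is \emph{exactly} the subgroup of the $\gl_2\times\gl_1$-action that fixes the canonical orientation, so that $\gltw$-classes of $\nn$-twisted forms correspond bijectively to isomorphism classes of $(\cc,\theta,\mm)$ rather than of $(\cc,\mm)$. The delicate direction is surjectivity/well-definedness of the inverse: given an abstract $\nn$-oriented algebra $(\cc,\theta)$ with $\mm$ invertible, one must produce an $\nn$-twisted form whose canonical orientation $\theta_q$ matches $\theta$, which requires choosing the $\os$-module $\vv$ compatibly; here $\theta$ pins down $\Lambda^2\vv$ in terms of $\nn$ and the Wood-discriminant data, and one checks that the resulting $\LL$ forced by Wood's reconstruction is indeed $\Lambda^2\vv^{\vee}\otimes_{\os}\nn$. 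Since no hypothesis on $S$ is needed here (the statement does not assume $2$ is not a zero divisor), all of this is pure book-keeping on line bundles and does not use the classification results; I would present it by unwinding Wood's local formulas, glueing, and invoking her bijection as a black box for the remaining exactness.
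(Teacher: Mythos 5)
Your proposal is correct and follows exactly the route the paper intends: the paper gives no proof of Theorem~\ref{Woodgltw} at all, stating it ``in the same spirit'' as Wood's Theorems~5.1 and~5.2 as the specialization $\LL = \Lambda^2\vv^{\vee}\otimes_{\os}\nn$ of her $\gl_2\times\gl_1$-bijection, which is precisely your strategy. Your observation that $\Lambda^2\vv\otimes_{\os}\LL\simeq\nn$ canonically yields the orientation, and that the $\det(\mu)^{-1}$ counterweight makes $\gltw$-equivalence exactly the orientation-preserving part of the $\gl_2\times\gl_1$-action, supplies the bookkeeping the paper leaves implicit (and is consistent with how it is later used in Proposition~\ref{natortype} and Theorem~\ref{bij}).
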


\subsection{Oriented and natural discriminant, oriented and natural parity} \label{sectwdiscpar}

We want to parametrize the Picard group of a given quadratic algebra $\cc_0$ using Theorem~\ref{Woodgltw} and the tools developed in Section~\ref{classification}. Assume that $2$ is not a zero divisor on the base scheme $S$, and let $\nn$ be a chosen representative of the isomorphism class of $(\cc_0/\os)^{\vee}$. in view of Proposition~\ref{auto}, the only automorphism of an $\nn$-oriented quadratic algebra over $S$ is the identity. Therefore, given an $\nn$-orientation $\theta$ of $\cc_0$, and $\mm$, $\mm'$ two $\cc_0$-modules, the triples $(\cc_0,\theta,\mm)$ and $(\cc_0, \theta, \mm')$ are isomorphic in the sense of Theorem~\ref{Woodgltw} if and only if $\mm$ and $\mm'$ are isomorphic as $\cc_0$-modules. Thus, to recover the Picard group of $\cc_0$, we need to fix an orientation $\theta_0$ and select all isomorphism classes having a representative of the form $(\cc_0,\theta_0, \mm)$ for some $\mm$. The possible obstructions are the existence of different quadratic algebras which are not isomorphic, and there may be different orientations of $\cc_0$ giving non-isomorphic oriented quadratic algebras. In this Subsection, we adapt the discriminant and the parity to that context, in order to overcome these problems. Notice that we do not need yet the hypothesis that $2$ is not a zero divisor on $S$. From now on, the only classes of quadratic forms we shall consider will be $\gltw$-classes.

The following Example shows that considering isomorphism classes of discriminants is too flexible.

\begin{ex} \label{exdelta}
	Let $R$ be a ring, let ${\varepsilon \in R^{\times}}$, and let ${C = R[\tau]/\left\langle \tau^2 + r\tau + s \right\rangle}$ a free quadratic algebra over $R$, where $r,s \in R$. Let $(C, \theta)$ and $(C, \theta_{\varepsilon})$ be two differently $R$-oriented copies of $C$, where ${\theta \colon \ortau \mapsto 1}$ and ${\theta_{\varepsilon} \colon \varepsilon \ortau \mapsto 1}$, with $\ortau$ the image of $\tau$ in $C/R$. Let $I$ be some invertible ideal of $C$, and assume that $I$ is free of rank $2$ over $\os$. If $q = [a,b,c]$ is a quadratic form attached to $(C,\theta,I)$, then one can compute that ${q_{\varepsilon} := [\varepsilon a, \varepsilon b, \varepsilon c]}$ corresponds to $(C,\theta_{\varepsilon},I)$, following Wood's local construction \cite[Remark~2.2]{Wood}. If we denote by ${\delta = b^2-4ac}$ the ``natural'' discriminant of $q$, then $\varepsilon^2\delta$ is the one of $q_{\varepsilon}$, hence they have isomorphic discriminant by Remark~\ref{pbdisc}. But if ${\varepsilon^2 \neq 1}$, $q$ and $q_{\varepsilon}$ are not in the same $\gltw$-class since the twisted action completely fixes the discriminant, in view of Remark~\ref{fix}.
\end{ex}

A way to choose exactly one $\gltw$-class among those of $q$ and $q_{\varepsilon}$ is to formalize this notion of ``natural'' discriminant, that is, to consider a well-chosen representative of the isomorphism class of the discriminant. The same applies for the parity.

\begin{defi}
	Let $\nn \in \pic(S)$.
	
	The \textit{oriented discriminant} of an $\nn$-oriented quadratic algebra $(\cc,\theta)$ is the global section of $\nn^{\otimes 2}$ defined by
	$$\delta_{or}(\cc,\theta) := ((\theta^{\vee})^{-1})^{\otimes 2}(\Delta(\cc)).$$
	
	The \textit{natural discriminant} of an $\nn$-twisted binary quadratic form $(\vv,q)$ is the global section of $\nn^{\otimes 2}$ defined by
	$$\delta_{nat}(\vv,q) := (\tr\otimes id_{\nn})^{\otimes 2}(\Delta(\vv,\Lambda^2\vv^{\vee}\otimes_{\os}\nn,q)),$$
	where $id_{\nn}$ is the identity map on $\nn$ and $\tr$ is the trace, that is, the canonical map ${\Lambda^2\vv \otimes_{\os} \Lambda^2\vv^{\vee} \overset{\sim}{\longrightarrow} \os}$ defined by $(x \wedge y) \otimes (\lambda \wedge \mu) \mapsto \lambda(x)\mu(y) - \lambda(y)\mu(x)$.
	
	Likewise, the \textit{oriented parity} of $(\cc,\theta)$ is the global section of $\nn/2\nn$ defined by
	$$\pi_{or}(\cc,\theta) := \overline{(\theta^{\vee})^{-1}}(\Pi(\cc)),$$
	where $\overline{(\theta^{\vee})^{-1}}$ is the map induced by $(\theta^{\vee})^{-1}$ modulo $2$.
	
	The \textit{natural parity} of $(\vv,q)$ is the global section of $\nn/2\nn$ defined by $$\pi_{nat}(\vv,q) := \overline{(\tr\otimes id_{\nn})}(\Pi(\vv,\Lambda^2\vv^{\vee}\otimes_{\os}\nn,q)).$$
\end{defi}

\begin{rem}
	We can describe all those discriminants and parities locally. Let $U \subseteq S$ be an open subset such that $\ccu$, $\vvu$, $\nnu$ are free. Let $(1,\tau)$, $(x,y)$ and $(\nu)$ be $\ou$-bases of $\ccu$, $\vvu$ and $\nnu$ respectively, let $\ortau$ be the image of $\tau$ in $(\cc/\os)_{\vert U}$, and write ${\tau^2+r\tau+s = 0}$ for some ${r,s \in \Gamma(U,\os)}$. Then, we have
	\begin{align*}
		& \delta_{or \vert U}(\cc,\theta) = (r^2-4s)\theta(\ortau)(\nu)^{-2}\nu \otimes \nu & \hspace{1cm} & \pi_{or \vert U}(\cc,\theta) \equiv r\theta(\ortau)(\nu)^{-1} \nu \pmod{2\Gamma(U,\nn)} \\
		& \delta_{nat \vert U}(\vv,q) = (b^2-4ac)\nu \otimes \nu & & \pi_{nat \vert U}(\vv,q) \equiv b \nu \pmod{2\Gamma(U,\nn)}.
	\end{align*}
\end{rem}

\begin{rem}
	In the free case, when ${\nn = \os}$, we can view directly the oriented and natural discriminants and parities as global sections of $\os$ and $\os/2\os$, expressing the previous quantities in the canonical basis of $\os$.
\end{rem}

\begin{defi}
	Let $\nn \in \pic(S)$.
	
	The \textit{oriented type} of an $\nn$-oriented quadratic algebra $(\cc,\theta)$ is the triple $(\delta_{or}(\cc,\theta),\pi_{or}(\cc,\theta),\nn)$. Equivalently, it is the triple obtained from the type of $\cc$ through the map $(\theta^{\vee})^{-1}$.
	
	Similarly, the \textit{natural type} of an $\nn$-twisted binary quadratic form $(\vv,q)$ is the triple $(\delta_{nat}(\vv,q),\pi_{nat}(\vv,q),\nn)$, also obtained from the type of $q$ through the map $\tr \otimes id_{\nn}$.
\end{defi}

Once again, the oriented parity is necessary in the general case to characterize oriented quadratic algebras. Indeed, in the same spirit as in Example~\ref{cexpar}, we can construct non-isomorphic oriented quadratic algebras $(\cc_1,\theta_1)$ and $(\cc_2,\theta_2)$ with the same oriented discriminant. But we can also find two different orientations of the same quadratic algebra giving two non-isomorphic oriented quadratic algebras with the same oriented discriminant!

\begin{ex} \label{exparor}
	Let ${R = \Z_2\!\left[2^{\frac{3}{4}}\right]}$, an order in the totally ramified extension $\Z_2[\sqrt[4]{2}]$. Observe that ${\sqrt{2} \notin R}$, but ${2\sqrt{2} = \left(2^{\frac{3}{4}}\right)^2\in R}$, so we can form the quadratic algebra
	$$C = \faktor{R[X]}{\left\langle (X - \sqrt{2})^2 \right\rangle} = \faktor{R[X]}{\left\langle X^2 - 2\sqrt{2}X + 2 \right\rangle}.$$
	
	Let ${\theta_1 \colon \overline{X} \mapsto 1+2^{\frac{3}{4}}}$ and ${\theta_2 \colon \overline{X} \mapsto 1}$ be two orientations of $C$, with $\overline{X}$ the image of $X$ in $C/R$. Notice that ${1+2^{\frac{3}{4}}}$ is indeed a unit in $R$ since ${\left(1+2^{\frac{3}{4}}\right)\left(1-2^{\frac{3}{4}}\right)\left(1-2\sqrt{2}\right) = -7}$. The oriented discriminant of $(C,\theta_i)$ is zero in both cases, but there is no isomorphism of oriented quadratic algebras between $(C,\theta_1)$ and $(C,\theta_2)$. The reason is that such an isomorphism $\psi$ should satisfy ${\psi(X) = \left(1+2^{\frac{3}{4}}\right)X + \alpha}$, for some ${\alpha \in R}$ to be determined, since we must have ${\theta_1 = \theta_2 \circ \overline{\psi}}$. Since $\psi(X)$ must be a root of the polynomial ${X^2-2\sqrt{2}X+2}$, one computes (using Equation~(\ref{eq}) for instance) that in particular, one should have ${\alpha = 2^{\frac{5}{4}}}$. This is impossible, since $2^{\frac{5}{4}}$ does not belong to $R$.
\end{ex}

\begin{rem}
	Instead of ${\Z_2\!\left[2^{\frac{3}{4}}\right]}$, one can take ${R = \Z\!\left[\dfrac{1}{7},2^{\frac{3}{4}}\right]}$ in Example~\ref{exparor}.
\end{rem}

\begin{ex}
	Let ${R = \faktor{\Z[X,Y]}{\left\langle X^2-8, Y^2-8 \right\rangle}}$ and ${C = \faktor{R[Z]}{\left\langle Z^2 + XZ + 2 \right\rangle}}$. Let ${\theta_1 \colon \overline{Z} \mapsto 1}$ and ${\theta_2 \colon \overline{Z} \mapsto 3-Y}$ be two orientations (where ${3-Y \in R^{\times}}$ since ${(3-Y)(3+Y) = 1}$). The oriented discriminant of $(C,\theta_i)$ is $0$ in both cases. Following the reasoning of Example~\ref{exparor}, we show that there is no isomorphism of oriented quadratic algebras between $(C,\theta_1)$ and $(C,\theta_2)$. Indeed, any isomorphism $\psi$ should satisfy ${\psi(Z) = (3+Y)Z + \alpha}$ for some ${\alpha \in R}$, and the fact that $\psi(Z)$ is a root of the polynomial ${Z^2+XZ+2}$ implies that ${2\alpha = 2X + XY}$, but $2$ does not divide $XY$ in $R$, a contradiction.
\end{ex}

\begin{prop} \label{natortype}
	Let $\nn \in \pic(S)$. Two $\nn$-twisted binary quadratic forms which are equivalent have equal natural type. Likewise, two isomorphic $\nn$-oriented quadratic algebras have equal oriented type. Furthermore, Wood's bijection (Theorem~\ref{Woodgltw}) is type-preserving, meaning that if a twisted quadratic form $(\vv,q)$ matches to a triple $(\cc,\theta,\mm)$, then ${\delta_{nat}(\vv,q) = \delta_{or}(\cc,\theta)}$ and ${\pi_{nat}(\vv,q) = \pi_{or}(\cc,\theta)}$.
\end{prop}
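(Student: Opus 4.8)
The statement packages three facts, and I would deduce all of them from the invariance results already established --- Propositions~\ref{typelbqf} and \ref{typeinv} (equivalently \ref{discwood} and \ref{parinv}) --- together with the naturality of the evaluation pairing, which for an isomorphism $h\colon W\overset{\sim}{\longrightarrow}W'$ of invertible $\os$-modules reads $\tr_{W'}\circ\big(h\otimes(h^{\vee})^{-1}\big)=\tr_{W}$. For the first assertion I would note that an equivalence between $\nn$-twisted forms $(\vv,q)$ and $(\vv',q')$ is realised by an isomorphism $f\colon\vv\overset{\sim}{\longrightarrow}\vv'$ together with the isomorphism it forces on the $\LL$-factors, $g:=(\Lambda^2 f^{\vee})^{-1}\otimes id_{\nn}\colon\Lambda^2\vv^{\vee}\otimes_{\os}\nn\overset{\sim}{\longrightarrow}\Lambda^2(\vv')^{\vee}\otimes_{\os}\nn$; the pair $(f,g)$ is exactly a $\gl_2\times\gl_1$-equivalence of the underlying linear binary quadratic forms, which is where the factor $\frac{1}{\det}$ of Definition~\ref{defgltw} is absorbed. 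Inspecting the local definitions (as in the proof of Proposition~\ref{discwood}) shows that $\Lambda^2 f\otimes g$ carries the type of $q$ to the type of $q'$ inside the powers of $\Lambda^2\vv\otimes_{\os}\LL$. Pushing forward along $\tr\otimes id_{\nn}$, and using that the $\nn$-component of $g$ is the identity together with the naturality of $\tr$ (applied to $h=\Lambda^2 f$), the square relating the two trace maps commutes, so the natural types --- which lie in $\Gamma(S,\nn^{\otimes 2})$ and $\Gamma(S,\nn/2\nn)$ for the \emph{same} $\nn$ --- are literally equal.

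For the second assertion, let $\psi\colon(\cc,\theta)\overset{\sim}{\longrightarrow}(\cc',\theta')$ be an isomorphism of $\nn$-oriented algebras. Propositions~\ref{discwood} and \ref{parinv} produce the induced isomorphism $\overline{\psi}\colon\cc/\os\overset{\sim}{\longrightarrow}\cc'/\os$ with $\Delta_W(\cc)=(\overline{\psi}^{\vee})^{\otimes 2}(\Delta_W(\cc'))$ and the analogous relation modulo $2$ for the parities. Dualising the defining relation $\theta=\theta'\circ\overline{\psi}$ gives $\theta^{\vee}=\overline{\psi}^{\vee}\circ\theta'^{\vee}$, hence $(\theta'^{\vee})^{-1}=(\theta^{\vee})^{-1}\circ\overline{\psi}^{\vee}$. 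Substituting this into $\Delta_{or}(\cc',\theta')=((\theta'^{\vee})^{-1})^{\otimes 2}(\Delta_W(\cc'))$ and using the first relation yields $\Delta_{or}(\cc',\theta')=((\theta^{\vee})^{-1})^{\otimes 2}(\Delta_W(\cc))=\Delta_{or}(\cc,\theta)$; the oriented parities are handled in the same way with $\overline{\psi}^{\vee}$ reduced modulo $2$.

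For the last assertion I would argue locally and then glue. On an open $U$ trivialising $\vvu$ and $\nnu$ (hence $\LLU$ and $\ccu$), with bases $(x,y)$, $(\nu)$ and $z:=\nu\det_{(x,y)}$, write $q=ax^2z+bxyz+cy^2z$. The local description in the proof of Theorem~\ref{Woodgl2gl1} gives $\ccu=\ou\oplus\tau\ou$ with $\tau:=(x^{\vee}\wedge y^{\vee})\otimes z^{\vee}$ and $\tau^2+b\tau+ac=0$, so $\Delta_W(\cc)_{\vert U}=(b^2-4ac)\ortau^{\vee}\otimes\ortau^{\vee}$ and $\Pi(\cc)_{\vert U}\equiv b\ortau^{\vee}$. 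Since $\LL=\Lambda^2\vv^{\vee}\otimes_{\os}\nn$, the orientation attached to $(\vv,q)$ in the bijection of Theorem~\ref{Woodgltw} is the canonical isomorphism $\cc/\os\cong(\Lambda^2\vv\otimes_{\os}\LL)^{\vee}\cong\nn^{\vee}$ induced by the trace; unwinding $z^{\vee}=(x\wedge y)\otimes\nu^{\vee}$ shows it sends $\ortau$ to $\nu^{\vee}$, so $(\theta^{\vee})^{-1}(\ortau^{\vee})=\nu$. Plugging this into the local formulas recorded just after the definitions of $\Delta_{or}$ and $\pi_{or}$ gives $\Delta_{or}(\cc,\theta)_{\vert U}=(b^2-4ac)\,\nu\otimes\nu$ and $\pi_{or}(\cc,\theta)_{\vert U}\equiv b\,\nu$, which are precisely the local expressions of $\Delta_{nat}(\vv,q)$ and $\pi_{nat}(\vv,q)$. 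As these identities hold on an open covering of $S$, they hold globally.

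The routine part is the chain of dualisations; the one point requiring care is identifying the orientation $\theta$ produced by Theorem~\ref{Woodgltw} with the canonical trace isomorphism $\cc/\os\cong(\Lambda^2\vv\otimes_{\os}\LL)^{\vee}\cong\nn^{\vee}$, and checking that, under the identification $\Lambda^2\vv\otimes_{\os}\LL=(\cc/\os)^{\vee}$ furnished by the proof of Theorem~\ref{Woodgl2gl1}, the map $\tr\otimes id_{\nn}$ on the quadratic-forms side coincides with $(\theta^{\vee})^{-1}$ on the algebra side (up to the canonical swap $W^{\vee}\otimes W\cong W\otimes W^{\vee}$). Granting this, the equalities $\Delta_{nat}(\vv,q)=\Delta_{or}(\cc,\theta)$ and $\pi_{nat}(\vv,q)=\pi_{or}(\cc,\theta)$ follow from the already-known fact that Wood's bijection preserves the un-normalised Wood-discriminant and parity.
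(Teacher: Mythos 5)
Your proposal is correct and follows essentially the same route as the paper: the paper likewise handles the first assertion by transporting local bases along $f$ (so that the coefficients $a,b,c$ are unchanged), treats the oriented-algebra case via the relation $\theta=\theta'\circ\overline{\psi}$, and verifies the compatibility with the bijection locally through Wood's Remark~2.2, where the orientation sends $\overline{\tau}$ to $\nu^{\vee}$ and both sides compute to $(b^2-4ac)\,\nu\otimes\nu$ and $b\,\nu$. Your packaging of the first step through Proposition~\ref{typelbqf} and the naturality of the trace pairing, and of the last step through identifying $\theta$ with the canonical trace isomorphism, is only a more functorial phrasing of the same local computations.
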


\begin{proof}
	Let $(\vv,q)$ and $(\vv',q')$ be two equivalent $\nn$-twisted binary quadratic forms: there exists an isomorphism of $\os$-modules ${f \colon \vv \overset{\sim}{\longrightarrow} \vv'}$ sending $q$ to $q'$. Locally on some open subset ${U \subseteq S}$ such that $\vvu$ and $\nnu$ are free, let $(x,y)$ and $(\nu)$ be $\ou$-bases of $\vvu$ and $\nnu$ respectively, and set ${(x',y') := (f_{\vert U}(x),f_{\vert U}(y))}$ for an $\ou$-basis of $\vv'_{\vert U}$. Denote by ${q = ax^2z + bxyz + cy^2z}$ where ${z = \nu\det_{(x,y)}}$; then $f_{\vert U}$ sends $q$ to ${q' = ax'^2z' + bx'y'z'+cy'^2z'}$. The coefficients being unmodified, the computation of the natural discriminant and parity is everywhere locally the same.
	
	For $\nn$-oriented quadratic algebras, the reasoning is similar. It remains to check the correspondence. According to Remark 2.2 in \cite{Wood}, locally on ${U \subseteq S}$ where $\vvu$ and $\nnu$ are free, if ${q = ax^2z+bxyz+cy^2z}$ in some $\ou$-basis $(x,y)$ of $\vvu$ (and ${z = \nu\det_{(x,y)}}$), then the corresponding quadratic algebra $\ccu$ is given by the $\ou$-basis $(1,\tau)$ where ${\tau^2 + b\tau + ac = 0}$, and its orientation $\theta$ sends $\ortau$ to $\nu^{\vee}$. On the one hand, we have ${\delta_{nat \vert U}(\vv, q) = (b^2-4ac)\nu\otimes\nu}$; on the other hand, ${\delta_{or \vert U}(\cc,\theta) = (b^2-4ac)\theta(\ortau)(\nu)^{-2}\nu\otimes\nu = (b^2-4ac)\nu\otimes\nu = \delta_{nat \vert U}(\vv,q)}$, as desired. Clearly, we also have ${\pi_{nat \vert U}(\vv,q) \equiv b\nu \equiv \pi_{or \vert U}(\cc,\theta)\pmod{2\Gamma(U,\nn)}}$, hence $(\vv,q)$ and $(\cc,\theta)$ have equal types.
\end{proof}

\subsection{Recovering the Picard group from Wood's twisted quadratic forms} \label{sectwpic}

Under the condition that $2$ is not a zero divisor on our base scheme $S$, it is enough to fix the oriented discriminant and the oriented parity in order to recover the Picard group in Theorem~\ref{Woodgltw}. This extra condition on $2$ is necessary, as shown in Examples~\ref{pasuni} and~\ref{pasuni2}.

\begin{theo} \label{oruniqueness}
	Let $S$ be a scheme, and assume that $2$ is not a zero divisor on $S$. Then we have the following bijection:
	\begin{align*}
		\left\{
		\begin{array}{c}
		valid~triples~ (\delta,\pi,\nn)
		\end{array}
		\right\} & \overset{1 : 1}{\longleftrightarrow} \left\{
		\begin{array}{c}
		isomorphism~classes~of \\
		\nn \mhyphen oriented~quadratic~algebras
		\end{array}
		\right\} \\
		(\delta,\pi,\nn) ~ & \longmapsto ~\Big[\left(\Omega(\delta,\pi,\nn), id_{\nn^{\vee}}\right)\Big] \\
		(\delta_{or}(\cc,\theta),\pi_{or}(\cc,\theta),\nn) ~ & \longmapsfrom ~ \Big[(\cc,\theta)\Big]
	\end{align*}
	where $\Omega(\delta,\pi,\nn)$ is the quadratic algebra constructed in Proposition~\ref{constr}, and $id_{\nn^{\vee}}$ denotes the identity map ${\Omega/\os = \nn^{\vee} \longrightarrow \nn^{\vee}}$.
\end{theo}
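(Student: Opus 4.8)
The plan is to show that the two displayed maps are mutually inverse; the bijection then follows at once. Throughout, write $\Omega := \Omega(\Delta,\pi,\nn)$ for the quadratic algebra produced by Proposition~\ref{constr} from a valid triple $(\Delta,\pi,\nn)$ (the hypothesis that $2$ is not a zero divisor is precisely what makes this construction available). From its construction as a glueing of free pieces $\Omega_i = \mathcal{O}_{U_i} \oplus \nu_i^{\vee}\mathcal{O}_{U_i}$ one reads off $\Omega/\os = \nn^{\vee}$ canonically, with $\overline{\omega_i} = \nu_i^{\vee}$; hence $id_{\nn^{\vee}}$ is a legitimate orientation, and by Proposition~\ref{constr} the pair $(\Omega, id_{\nn^{\vee}})$ is well-defined up to isomorphism of $\nn$-oriented quadratic algebras, so the right-hand map makes sense. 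For the left-hand map: the oriented type $(\Delta_{or}(\cc,\theta),\pi_{or}(\cc,\theta),\nn)$ is, by definition, the image of the type of $\cc$ under the isomorphism $(\theta^{\vee})^{-1} \colon (\cc/\os)^{\vee} \overset{\sim}{\longrightarrow} \nn$, so it is isomorphic to the type of $\cc$ and in particular a valid triple; and it depends only on the isomorphism class of $(\cc,\theta)$ by Proposition~\ref{natortype}. Thus both maps are well-defined.

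First I would compute the composite that sends a valid triple $(\Delta,\pi,\nn)$ to $(\Omega, id_{\nn^{\vee}})$ and then to its oriented type. Since the dual of $id_{\nn^{\vee}}$ is $id_{\nn}$, the oriented discriminant and oriented parity of $(\Omega, id_{\nn^{\vee}})$ are simply the Wood-discriminant and the parity of $\Omega$, viewed as sections of $\nn^{\otimes 2}$ and $\nn/2\nn$ via the identification $(\Omega/\os)^{\vee} = \nn$. By the second assertion of Proposition~\ref{constr} these equal $\Delta$ and $\pi$ respectively, so this composite is the identity.

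The heart of the matter is the reverse composite, and the key point is that the isomorphism built in the proof of Theorem~\ref{uniqueness} already respects orientations. Let $(\cc,\theta)$ be an $\nn$-oriented quadratic algebra and put $(\Delta,\pi,\nn) := (\Delta_{or}(\cc,\theta),\pi_{or}(\cc,\theta),\nn)$. By the description of the oriented type, $\theta$ sends the type of $\cc$ to $(\Delta,\pi,\nn)$, so we are exactly in the setting of the proof of Theorem~\ref{uniqueness}, applied with this specific $\theta$. After refining to a covering $(U_i)_i$ over which both $\cc$ and $\nn$ are free, that proof constructs local algebra isomorphisms $\Psi_i \colon \cc_{\vert U_i} \overset{\sim}{\longrightarrow} \Omega_i$ sending a chosen generator $\tau_i$ (with $\tau_i^2 + r_i\tau_i + s_i = 0$) to $\varepsilon_i\big(\omega_i + \frac{\tilde{p}_i - r_i\varepsilon_i^{-1}}{2}\big)$, where $\varepsilon_i \in \Gamma(U_i,\os)^{\times}$ is characterised by $\theta(\overline{\tau_i}) = \varepsilon_i\overline{\omega_i}$, and glues them to an algebra isomorphism $\Psi \colon \cc \overset{\sim}{\longrightarrow} \Omega$. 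Passing to the induced maps on $\cc/\os$, one gets $\overline{\Psi_i}(\overline{\tau_i}) = \varepsilon_i\overline{\omega_i} = \theta(\overline{\tau_i})$, so $\overline{\Psi}$ agrees with $\theta$ on every $U_i$, hence $\overline{\Psi} = \theta$; since $\overline{\omega_i} = \nu_i^{\vee}$ under $\Omega/\os = \nn^{\vee}$, this reads $id_{\nn^{\vee}} \circ \overline{\Psi} = \theta$. Therefore $\Psi$ is an isomorphism of $\nn$-oriented quadratic algebras $(\cc,\theta) \overset{\sim}{\longrightarrow} (\Omega, id_{\nn^{\vee}})$, so the reverse composite is also the identity. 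The only step needing genuine care is this orientation bookkeeping inside the argument of Theorem~\ref{uniqueness}; everything else is an unwinding of the definitions together with Propositions~\ref{constr} and~\ref{natortype}.
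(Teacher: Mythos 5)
Your proposal is correct and follows essentially the same route as the paper: it reuses the construction of Theorem~\ref{uniqueness} with the given orientation $\theta$ in place of a chosen one, and verifies that the glued isomorphism $\Psi$ satisfies $id_{\nn^{\vee}} \circ \overline{\Psi} = \theta$, which is exactly the ``straightforward computation'' the paper alludes to. Your explicit check of the forward composite via Proposition~\ref{constr} matches the paper's appeal to surjectivity by definition of valid triples, so there is no substantive difference.
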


\begin{proof}
	We proceed as for Theorem~\ref{uniqueness}. The map sending the isomorphism class of an $\nn$-oriented quadratic algebra to its oriented type is well-defined by Proposition~\ref{natortype}, and surjective by definition of valid triples. For injectivity, we check that every $\nn$-oriented quadratic algebra $(\cc,\theta)$ of oriented type $(\delta,\pi,\nn)$ is isomorphic to $(\Omega(\delta,\pi,\nn),id_{\nn^{\vee}})$. For this part, the proof is the same as for Theorem~\ref{uniqueness}, except that $\theta$ is not chosen but given. The commutativity of the local isomorphisms $\Psi_i$'s with the orientations $\theta$ and $id_{\nn^{\vee}}$ is a straightforward computation.
\end{proof}

Finally, we can prove Theorem~\ref{goal}, which we make precise here:

\begin{theo}\label{bij}
	Let $S$ be a scheme in which $2$ is not a zero divisor. Let $\cc_0$ be a quadratic algebra over $S$. Let $\nn$ be a representative of the isomorphism class of the $\os$-module $(\cc_0/\os)^{\vee}$, and fix an isomorphism of $\os$-modules ${\theta \colon \cc/\os \overset{\sim}{\longrightarrow} \nn^{\vee}}$. Set $\delta := \delta_{or}(\cc,\theta)$ and $\pi := \pi_{or}(\cc,\theta)$.
	
	Then Wood's bijection (Theorem~\ref{Woodgltw}) restricts to a bijection
	$$ \left\{
	\begin{array}{c}
	\gltw \!{} \mhyphen equivalence~classes~of~primitive \\
	\nn \mhyphen twisted~binary~quadratic~forms \\
	with~natural~type~(\delta,\pi,\nn)
	\end{array}
	\right\} \overset{1 : 1}{\longleftrightarrow} \pic(\cc_0),$$
	which endows the set on the left-hand side with a group structure.
\end{theo}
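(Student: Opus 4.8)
The plan is to compose the bijection of Theorem~\ref{Woodgltw} with the classification of $\nn$-oriented quadratic algebras from Subsection~\ref{sectwdiscpar} (Theorem~\ref{oruniqueness}) and the triviality of automorphisms from Proposition~\ref{auto}. The first observation is that $(\cc_0,\theta)$ is itself an $\nn$-oriented quadratic algebra whose oriented type is \emph{exactly} $(\Delta,\pi,\nn)$, since by definition $\Delta=\Delta_{or}(\cc_0,\theta)$ and $\pi=\pi_{or}(\cc_0,\theta)$; in particular $(\Delta,\pi,\nn)$ is a valid triple. As $2$ is not a zero divisor, Theorem~\ref{oruniqueness} applies, so the isomorphism class of an $\nn$-oriented quadratic algebra is determined by its oriented type; hence every $\nn$-oriented quadratic algebra $(\cc,\theta')$ of oriented type $(\Delta,\pi,\nn)$ is isomorphic, as an $\nn$-oriented quadratic algebra, to $(\cc_0,\theta)$.

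Next I would invoke Proposition~\ref{natortype}, which says that Wood's bijection of Theorem~\ref{Woodgltw} is type-preserving: if a primitive $\nn$-twisted form $(\vv,q)$ corresponds to a triple $(\cc,\theta',\mm)$, then $\Delta_{nat}(\vv,q)=\Delta_{or}(\cc,\theta')$ and $\pi_{nat}(\vv,q)=\pi_{or}(\cc,\theta')$. Consequently, restricting the left-hand side of Theorem~\ref{Woodgltw} to forms of natural type $(\Delta,\pi,\nn)$ corresponds, under Wood's bijection, to restricting the right-hand side to isomorphism classes of triples $(\cc,\theta',\mm)$ whose underlying oriented algebra $(\cc,\theta')$ has oriented type $(\Delta,\pi,\nn)$, equivalently (by the previous paragraph) such that $(\cc,\theta')\simeq(\cc_0,\theta)$. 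Transporting $\mm$ along such an isomorphism, each of these isomorphism classes admits a representative of the form $(\cc_0,\theta,\mm)$.

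It then remains to identify the isomorphism classes of triples of the form $(\cc_0,\theta,\mm)$, with $\mm$ invertible, with $\pic(\cc_0)$. Two such triples $(\cc_0,\theta,\mm)$ and $(\cc_0,\theta,\mm')$ are isomorphic in the sense of Theorem~\ref{Woodgltw} precisely when there is a pair $(\psi,\varphi)$ with $\psi$ an automorphism of the $\nn$-oriented algebra $(\cc_0,\theta)$ and $\varphi\colon\mm\otimes_{\cc_0}\cc_0\overset{\sim}{\longrightarrow}\mm'$ an isomorphism of $\cc_0$-modules. By Proposition~\ref{auto}, $\psi$ is forced to be the identity, so this condition reduces to $\mm\simeq\mm'$ as $\cc_0$-modules. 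Hence $[(\cc_0,\theta,\mm)]\mapsto[\mm]$ is a well-defined bijection onto $\pic(\cc_0)$, and composing it with the restricted Wood bijection yields the claimed $1:1$ correspondence; transporting the group law of $\pic(\cc_0)$ through it then equips the set of $\gltw$-classes on the left with a group structure, which is the last assertion.

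I expect the only genuinely delicate point to be the bookkeeping underlying the end of the second paragraph and the start of the third: one must check that transporting an invertible module along an isomorphism $(\cc,\theta')\overset{\sim}{\longrightarrow}(\cc_0,\theta)$ of $\nn$-oriented algebras is compatible with the isomorphism relation on triples of Theorem~\ref{Woodgltw}, so that the passage from ``isomorphism classes of triples of oriented type $(\Delta,\pi,\nn)$'' to ``isomorphism classes of triples over the fixed pair $(\cc_0,\theta)$'' is itself a bijection, not merely a surjection. Granting that functoriality, everything else is a direct application of Theorems~\ref{Woodgltw} and~\ref{oruniqueness} and Proposition~\ref{auto}, with no computation required.
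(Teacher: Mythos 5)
Your proposal is correct and follows essentially the same route as the paper: it combines Theorem~\ref{Woodgltw} with the uniqueness of the $\nn$-oriented quadratic algebra of oriented type $(\Delta,\pi,\nn)$ from Theorem~\ref{oruniqueness}, the type-preservation of Proposition~\ref{natortype}, and the triviality of oriented automorphisms from Proposition~\ref{auto}. The ``delicate point'' you flag (transporting $\mm$ along an isomorphism of oriented algebras) is exactly the step the paper handles implicitly via the definition of isomorphism of triples in Theorem~\ref{Woodgltw}, so your write-up is, if anything, slightly more explicit than the paper's.
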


\begin{proof}
	As seen in Theorem~\ref{oruniqueness}, there is a unique isomorphism class of $\nn$-oriented quadratic algebras with oriented discriminant ${\delta_{or} = \delta}$ and oriented parity $\pi_{or} = \pi$. Recall that by Proposition~\ref{auto}, there is no non-trivial automorphism of $(\cc_0,\theta)$, hence two $\cc_0$-modules $\mm$ and $\mm'$ are equivalent in Wood's sense (\textit{cf}. Theorem~\ref{Woodgltw}) if and only if they are isomorphic as $\cc_0$-modules. Therefore, once a representative $(\cc_0,\theta)$ is fixed, the bijection restricts to isomorphism classes of invertible $\cc_0$-modules. Since the bijection is type-preserving (Proposition~\ref{natortype}), this corresponds to selecting, on the other side of the bijection, the $\gltw$-classes of primitive $\nn$-twisted quadratic forms having natural discriminant ${\delta_{nat} = \delta}$ and natural parity $\pi_{nat} = \pi$.
\end{proof}

In particular, Wood's local description of the bijection still holds in Theorem~\ref{bij}. It enables us to deduce the following special case:

\begin{cor} \label{expl}
	Let $R$ be an integral domain of characteristic different from $2$, and assume that every locally free $R$-module of finite rank is free. Let ${\pi \in R/2R}$, and let ${\tilde{\pi} \in R}$ be a lift of $\pi$. Let ${\delta \in R}$ be such that ${\delta \equiv \tilde{\pi}^2 \pmod{4R}}$. Then we have the global bijection
	\begin{align*}
		\left\{
		\begin{array}{c}
		\gltw \!{} \mhyphen classes~of~primitive \\
		twisted~binary~quadratic~forms \\
		with~natural~discriminant~\delta \\
		and~natural~parity~\pi
		\end{array}
		\right\} & \overset{1 \colon 1}{\longleftrightarrow} \pic\left(\faktor{R[\omega]}{\left\langle \omega^2 + \tilde{\pi} \omega - \frac{\delta-\tilde{\pi}^2}{4} \right\rangle}\right) \\
		\bigg[[a,b,c] \text{~with~} a\neq 0\bigg] & \longmapsto \bigg[\langle \omega + \frac{\tilde{\pi}-b}{2}, a \rangle\bigg]
	\end{align*}
	where the notation $[a,b,c]$ means that we have chosen an $R$-basis $(x,y)$ for our twisted binary quadratic form in which it can be expressed as ${ax^2z+bxyz+cy^2z}$ (where ${z = \det_{(x,y)}}$).
\end{cor}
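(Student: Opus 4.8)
The plan is to read off the asserted bijection from Theorem~\ref{bij} once one checks that the hypotheses on $R$ collapse everything to the free, global case. Put $S = \spec R$. Then $2$ is not a zero divisor on $S$, since $R$ is a domain of characteristic $\neq 2$, and $\pic(S)$ is trivial by hypothesis, so every locally free $\os$-module of finite rank is free; in particular we are in the case $\nn = \os^{\vee}$, which we identify with $\os$ via its canonical basis. Next I would observe that $(\Delta,\pi,R)$ is a valid triple: conditions \eqref{cond1}, \eqref{cond2}, \eqref{cond3} of Proposition~\ref{constr} hold for the trivial covering $\{S\}$, because $\nn = R$ is free, $\pi$ lifts to $\tilde{\pi}$, and $\Delta \equiv \tilde{\pi}^2 \pmod{4R}$ by assumption. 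Hence $\Omega := \Omega(\Delta,\pi,R) = \faktor{R[\omega]}{\langle \omega^2 + \tilde{\pi}\omega - \frac{\Delta - \tilde{\pi}^2}{4}\rangle}$ is well-defined (no glueing is needed), and a one-line computation with the local formulas for $\Delta_{or}$ and $\pi_{or}$ shows that the $\nn$-oriented algebra $(\Omega, id_{\nn^{\vee}})$ has oriented type $(\Delta,\pi,R)$; this is the quadratic algebra $\cc_0$ of Theorem~\ref{bij}.

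Applying Theorem~\ref{bij} with $\cc_0 = \Omega$, $\nn = \os^{\vee}$ and $\theta = id_{\nn^{\vee}}$ then yields the claimed bijection between $\gltw$-classes of primitive $\nn$-twisted binary quadratic forms of natural type $(\Delta,\pi,R)$ and $\pic(\Omega)$; the group structure transported to the left-hand side is part of that theorem. Because $\pic(S)$ is trivial, every such form is globally of the form $[a,b,c]$ with $a,b,c \in R$, $\langle a,b,c\rangle = R$, $b^2 - 4ac = \Delta$ and $b \equiv \tilde{\pi} \pmod 2$ (in the canonical basis $z = \det_{(x,y)}$). I would then note that each $\gltw$-class has a representative with $a \neq 0$: the values $q(1,1) = a+b+c$ and $q(1,-1) = a-b+c$ cannot both vanish unless $a$ is a unit (here one uses that $R$ is a domain of characteristic $\neq 2$), so acting by an elementary unipotent matrix (a $\gltw$-action, its determinant being $1$) moves such a nonzero value into the $x^2$-coefficient. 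This is the origin of the normalization $a \neq 0$ in the statement.

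It remains to identify the bijection on such representatives. By Wood's local construction (\cite[Remark~2.2]{Wood}, recalled inside the proof of Theorem~\ref{Woodgl2gl1}), the $\nn$-oriented algebra attached to $q = [a,b,c]$ is $\faktor{R[\tau]}{\langle \tau^2 + b\tau + ac\rangle}$ with orientation $\overline{\tau} \mapsto \nu^{\vee}$, and, when $a \neq 0$, the associated invertible module is the ideal $\langle a,\tau\rangle$. Since the natural discriminant of $q$ is $\Delta = b^2-4ac$ and $\tilde{\pi} \equiv b \pmod 2$, Lemma~\ref{freeok} (with $r = b$, $s = ac$, $\tilde{p} = \tilde{\pi}$) produces the isomorphism $\psi \colon \tau \mapsto \omega + \frac{\tilde{\pi}-b}{2}$ onto $\Omega$; one checks directly that it carries the orientation $\overline{\tau}\mapsto \nu^{\vee}$ to $id_{\nn^{\vee}}$ (as $\overline{\psi(\tau)} = \overline{\omega}$) and the ideal $\langle a,\tau\rangle$ to $\langle \omega + \frac{\tilde{\pi}-b}{2}, a\rangle$, which is thus an invertible ideal of $\Omega$. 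Composing with the bijection of Theorem~\ref{bij} gives precisely $[a,b,c] \mapsto [\langle \omega + \frac{\tilde{\pi}-b}{2}, a\rangle]$; bijectivity is inherited from Theorem~\ref{bij}, and independence of the chosen representative with $a\neq 0$ is automatic, since $\gltw$-equivalent forms are sent to isomorphic $\Omega$-modules.

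The step I expect to cost the most care is the transcription of Wood's local construction of the \emph{module} $\mm$ (the part not reproduced in the excerpt): checking that for $a \neq 0$ it coincides with the ideal $\langle a,\tau\rangle$ and remains invertible after the change of generator $\psi$. By comparison, the validity of $(\Delta,\pi,R)$, the reduction to $a \neq 0$, and the bookkeeping of the orientation through $\psi$ are routine.
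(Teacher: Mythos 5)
Your route is the same as the paper's: reduce to globally free forms $[a,b,c]$ via the freeness hypothesis, invoke Theorem~\ref{bij} (i.e.\ Theorem~\ref{Woodgltw} restricted to the natural type $(\Delta,\pi,R)$, with $\cc_0=\Omega(\Delta,\pi,R)$ and $\theta=id$), normalize to $a\neq 0$, read off the associated algebra from Wood's Remark~2.2, and transport everything through the change of variable $\tau\mapsto\omega+\frac{\tilde{\pi}-b}{2}$ of Lemma~\ref{freeok}. Your normalization argument (looking at $q(1,\pm1)$ and acting by a unipotent matrix) is a harmless variant of the paper's remark, which uses $[a,b,c]\sim[c,-b,a]$ and $[a,b,c]\sim[a+b+c,b+2c,c]$ together with primitivity.

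The one substantive step you assert rather than prove --- and which you correctly flag at the end --- is exactly where the paper's proof does its work: Wood's local construction does not directly hand you the ideal $\langle\tau,a\rangle$; it hands you an abstract rank-$2$ module $M=xR\oplus yR$ with the action $\tau\cdot x=-by-cx$, $\tau\cdot y=ax$. The paper then exhibits the $C$-module morphism $\varphi\colon M\to\langle\tau,a\rangle$ sending $x\mapsto\tau$, $y\mapsto a$, checks it is $C$-linear and surjective, and obtains injectivity because $\langle\tau,a\rangle$ has rank $2$ as an $R$-module, $R$ being a domain and $a\neq 0$ --- this is precisely where both hypotheses enter, and without this verification the displayed formula $[a,b,c]\mapsto\big[\langle\omega+\frac{\tilde{\pi}-b}{2},a\rangle\big]$ is not established. (A minor wording slip besides: triviality of $\pic(R)$ alone would not give freeness of the rank-$2$ modules $\vv$; you need the hypothesis as stated, which you do have.) Supplying that isomorphism makes your argument complete and essentially identical to the paper's.
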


\begin{rem}
	We have ${[a,b,c] \sim [c,-b,a]}$ and ${[a,b,c] \sim [a+b+c,b+2c,c]}$ through the matrices $\begin{pmatrix} 0 & -1 \\ 1 & 0 \end{pmatrix}$ and $\begin{pmatrix} 1 & 0 \\ 1 & 1 \end{pmatrix}$. Since $[a,b,c]$ is primitive, the quantities $a$, $c$ and ${a+b+c}$ cannot vanish at the same time, hence every primitive quadratic form over $R$ is equivalent to one whose first coefficient is nonzero.
\end{rem}

\begin{proof}
	We follow Wood's local description of the bijection \cite[Remark~2.2]{Wood}. Since every locally free $R$-module of finite rank is free by assumption, every twisted binary quadratic form is globally defined by an equation of the form ${ax^2z+bxyz+cy^2z}$ with ${z = \det_{(x,y)}}$.
	
	Given such a quadratic form $[a,b,c]$ with ${a \neq 0}$, let ${C = R[\tau]/\left\langle \tau^2+b\tau + ac \right\rangle}$ and $M$ be the associated quadratic algebra and invertible $C$-module. There exist ${x,y \in M}$ such that ${M = xR \oplus yR}$ as $R$-modules, and ${\tau \cdot x = -by-cx}$, ${\tau \cdot y = ax}$. It is a straightforward computation to check that the homomorphism of $R$-modules
	$$\begin{array}{rcl}
		\varphi ~\colon M & \longrightarrow & \left\langle \tau, a \right\rangle \\
		x & \mapsto & \tau \\
		y & \mapsto & a
	\end{array}$$
	is a surjective homomorphism of $C$-modules. Since $R$ is an integral domain, and since ${a \neq 0}$, the ideal ${\left\langle \tau, a \right\rangle}$ has rank $2$ as an $R$-module, hence $\varphi$ is injective. Thus, $\varphi$ is an isomorphism of $C$-modules, and $[a,b,c]$ corresponds to the ideal ${\left\langle \tau, a \right\rangle}$ in $\pic(C)$.
	
	The isomorphism ${R[\tau]/\langle \tau^2+b\tau+ac \rangle \overset{\sim}{\longrightarrow} R[\omega]/\langle \omega^2+\tilde{\pi} \omega - \frac{\delta-\tilde{\pi}^2}{4} \rangle}$ sending $\tau$ to ${\omega + \frac{\tilde{\pi}-b}{2}}$ enables to conclude.
\end{proof}

Some examples of rings satisfying all hypotheses of Corollary~\ref{expl} are $R$ and $R[X]$ for every principal ideal domain $R$ such that ${\charac(R) \neq 2}$ \cite{Seshadri}.

\end{sloppypar}

 \providecommand{\andname}{\&} \providecommand{\bibsep}{0cm}
\providecommand{\toappear}{to appear} \providecommand{\noopsort}[1]{}

\textsc{William Dallaporta}, Institut de Mathématiques de Toulouse, Université de Toulouse, CNRS UMR 5219, 118 route de Narbonne, 31062 Toulouse Cedex 9, France.

E-mail address: \url{william.dallaporta@laposte.net}

\end{document}